
\documentclass[aop,reqno]{imsart}

\RequirePackage{amsthm,amsmath,amsfonts,amssymb}
\RequirePackage[numbers]{natbib}

\usepackage{SYM}
\usepackage{SYM-diagrams}
\usepackage{hyperref}
\usepackage{mhequ}
\usepackage{SG}

\newcommand{\revision}[1]{#1}

\startlocaldefs

\endlocaldefs

\begin{document}

\begin{frontmatter}
\title{Global well-posedness of the dynamical sine-Gordon model up to $\mathbf{6} \boldsymbol{\pi}$}
\runtitle{Global well-posedness of the dynamical sine-Gordon model up to~$6\pi$}

\begin{aug}
\author[A]{\fnms{Bjoern}~\snm{Bringmann}\ead[label=e1]{bringmann@princeton.edu}}
\and
\author[B]{\fnms{Sky}~\snm{Cao}\ead[label=e2]{skycao@mit.edu}}
\address[A]{Department of Mathematics, Princeton University, Princeton, NJ 08544\printead[presep={,\ }]{e1}}

\address[B]{Department of Mathematics, Massachusetts Institute of Technology, Cambridge, MA 02139\printead[presep={,\ }]{e2}}
\end{aug}

\begin{abstract}
We prove the global well-posedness of the dynamical sine-Gordon model up to the third threshold, i.e., for parameters $\beta^2 < 6\pi$. 
The key novelty in our approach is the introduction of the so-called resonant equation, whose solution is entirely deterministic and completely captures the size of the solution to the dynamical sine-Gordon model. The probabilistic fluctuations in the dynamical sine-Gordon model are then controlled using uniform estimates for modified stochastic objects.
\end{abstract}

\begin{keyword}[class=MSC]
\kwd{35R60,60H17}
\end{keyword}

\begin{keyword}
\kwd{Global well-posedness}
\kwd{Abelian-Higgs}
\kwd{Yang-Mills}
\kwd{Singular SPDEs}
\kwd{Gibbs measures}
\end{keyword}
\end{frontmatter}

\tableofcontents

\section{Introduction}

Following the success of \cite{H14,GIP15,K16,CH16,BHZ19,BCCH21,D21} in establishing a general local theory for singular stochastic partial differential equations (SPDEs), nowadays there is great interest in developing a global theory for the same equations. Unlike the local theory, the global theory generally has to be treated on a case-by-case basis, using specific details about the structure of the equation under consideration.
In terms of global theory, the first model to be treated was the parabolic $\Phi^4_d$ model (for $d = 2, 3$), by \cite{MW17, MW2020, GH21}. The special feature of this model is a good $-\phi^3$ damping term, which is used to absorb all the error terms in the energy estimates. However, in order to move beyond the $\Phi^4_d$ model, one needs techniques for showing global existence even without such a good damping term. Recent works have understood how to do this in various cases, for instance the 2D stochastic Navier-Stokes equation~\cite{HR23}, 2D stochastic pure Yang--Mills~\cite{CS23}, the generalized parabolic Anderson model~\cite{CdLFW24, SZZ24}, and 2D stochastic Abelian-Higgs~\cite{BC24}.

In the present paper, we consider the sine-Gordon model, which is a classic model of mathematical physics (for references to the large body of existing literature, see \cite[Section 1.3]{BB21}). More precisely, we consider the dynamical, massive version of the problem, which is the following singular SPDE: 
\begin{equs}\tag{mSG}\label{eq:mSG}
(\ptl_t - \revision{\tfrac{1}{2}}\Delta + m^2) u = \biglcol \, \sin(\beta u)\, \bigrcol + \zeta, ~~ u(0) = u_0,
\end{equs}
where $m^2 > 0$, $u_0 \colon\T^2 \ra \R$ and $\zeta$ is a space-time white noise on $\R \times \T^2$. Here, $\beta^2 \in [0, 8\pi)$, and there is an infinite sequence of thresholds approaching $8\pi$ given by:
\begin{equs}\label{eq:thresholds}
\beta^2_n = \frac{8n\pi}{n+1}, ~~ n \geq 1.
\end{equs}
At each successive threshold, the local theory for dynamical sine-Gordon gets more and more complicated. Up to the first threshold $4\pi$, the local theory can just be proven by a standard Da Prato-Debussche trick, but to go beyond $4\pi$, one needs more modern techniques. By using regularity structures, the article \cite{CH16} covered the local theory up to the second threshold $\beta^2 \in [0, 16\pi/3)$, and then the later article \cite{CHS2018} managed to handle the entire subcritical regime, i.e., $\beta^2 \in [0, 8\pi)$.

We next discuss how the solution $u$ to \eqref{eq:mSG} is constructed in \cite{CH16, CHS2018}. As usual, $u$ needs to be defined via a limiting procedure, which for the sine-Gordon model is done as follows. Fix a suitable mollifier $\rho \colon\R \times \T^2 \ra \R$ (see Section \ref{section:setup-and-main} for our precise assumptions on $\rho$), and for $\varep > 0$, define the rescaled version
\begin{equs}
\rho_\varep(t, x) := \varep^{-4} \rho(\varep^{-2} t, \varep^{-1} x).
\end{equs}
Let $\zeta_\varep := \zeta \ast \rho_\varep$. Consider the regularized equation
\begin{equs}
(\ptl_t - \revision{\tfrac{1}{2}}\Delta + m^2) u_\varep = \biglcol \, \sin(\beta u_\varep)\, \bigrcol + \zeta_\varep, ~~ u_\varep(0) = u_{0, \varep},
\end{equs}
where $u_{0, \varep}$ is a mollification\footnote{Technically, \cite{CHS2018} did not mollify the initial data, but their result easily extends to this case due to the usual stability results.} of the initial data $u_0$ at scale $\varep$, 
\begin{equs}\label{eq:renormalized-sine}
\biglcol \, \sin(\beta u_\varep)\, \bigrcol := C_{\beta, \rho, \varep} \sin(\beta u_\varep),
\end{equs}
and $C_{\beta, \rho, \varep}$ is a renormalization constant to be defined below, see \eqref{eq:gmc-renorm-constant}.

In \cite[Theorem 1.1]{CHS2018}, it was shown\footnote{Technically, in \cite[Theorem 1.1]{CHS2018} the following results are stated for the remainder $v_\varep$, but they transfer to $u_\varep$ since the difference between $u_\varep$ and $v_\varep$ is an explicit object for which one has explicit stochastic estimates which preclude the possibility of explosion in finite time. Also, technically the paper considers the massless equation, but the arguments directly extend to the massive case as well.} that for $\beta^2 \in [0, 8\pi)$, and $u_0 \in \Cs_x^\eta(\T^2)$ for $\eta \in (\frac{\beta^2}{8\pi} - 1, 0)$, the sequence $u_\varep$ converges to some random variable $u \in \mc{D}'(\R \times \T^2)$, in the sense that there is a stopping time $\tau$ such that for every $T > 0$, we have that $\|u_\varep - u\|_{C_t^0 \Cs_x^\eta([0, T])} \ra 0$ in probability on the event $\{\tau > T\}$. Moreover, we have that $\lim_{t \uparrow \tau} \|u(t)\|_{\Cs_x^\eta} = \infty$ on the event $\{\tau < \infty\}$. 

Discussing the global theory now, global well-posedness for $\beta^2 \in [0, 4\pi)$ follows directly from the local theory, because in this regime, the estimates that one obtains are as if the equation was linear.  Moving beyond this regime, the paper \cite{CdLFW24} proved global well-posedness for the generalized parabolic Anderson model, which includes dynamical sine-Gordon for parameters $\beta^2$ up to roughly $4.528\pi$ as a special case. There is also the paper \cite{SZZ24}, and one might expect that by extending the arguments developed therein, one would be able to show global well-posedness for $\beta^2$ up to roughly $4.628\pi$ (see \cite[Remark 1.3(iii)]{SZZ24}). In the present paper, we show global well-posedness of the dynamical sine-Gordon model all the way up to $6\pi$ (which recalling \eqref{eq:thresholds} is the third threshold).   In the following, we take $\bar{\beta} = \frac{\beta^2}{4\pi} + \kappa$, where $\kappa = 10^{-3}(6\pi - \beta^2)$.

\begin{theorem}[Global well-posedness of the dynamical sine-Gordon model up to $6\pi$]\label{thm:sg-gwp}
Let $\beta^2 < 6\pi$, let $m^2 > 0$, and let $\eta \in (\frac{\bar{\beta}}{2}-1, 0)$. For any deterministic initial data $u_0 \in \Cs_x^{\eta}(\T^2)$, the solution $u$ to \eqref{eq:mSG} a.s. exists globally, and moreover satisfies the following uniform-in-time bounds. Let $p \geq 1$. There are constants $C, c$, where $C$ depends on $\beta, m^2, p, \eta$, and $c$ depends on $\beta, m^2, \eta$, such that for all $T \geq 0$,
\begin{equs}\label{eq:sg-gwp}
\E[\|u(T)\|_{\Cs_x^{\eta}}^p] \leq e^{-c p T} \|u_0\|_{\Cs_x^\eta}^p + C.
\end{equs}
\end{theorem}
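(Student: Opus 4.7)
The plan is to build on the local solution theory of Chandra--Hairer--Shen by constructing a refined regularity-structures-style decomposition that allows uniform-in-time control, and then to extract the dominant deterministic dynamics of the remainder via a scalar resonant equation, exactly as suggested by the abstract. The exponential factor $e^{-cpT}\|u_0\|_{\Cs_x^{\eta}}^p$ in \eqref{eq:sg-gwp} should come from the damping contribution of the mass $m^2$ in the linear operator $\ptl_t - \Delta + m^2$, which turns the massive semigroup into a genuine contraction on $\Cs_x^\eta$.

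My first step is to set up the stochastic tower. I would write $u = \Psi + \sum_\tau X^\tau + v$, where $\Psi$ is the stochastic convolution, the $X^\tau$ are higher-order stochastic objects built from the Wick-exponentials $:\! e^{\pm i\beta\Psi}\!:$ paired iteratively with the massive heat kernel (going up to the complexity required for $\beta^2<6\pi$, i.e.\ past the second threshold $16\pi/3$), and $v$ is the nonlinear remainder. For each $X^\tau$, I would establish \emph{uniform-in-time} moment bounds
\begin{equation*}
\sup_{t \geq 0} \E\bigl[\|X^\tau(t)\|_{\Cs_x^{\alpha(\tau)}}^p\bigr] \;\leq\; C(p,\tau,\beta,m^2),
\end{equation*}
in the appropriate negative-H\"older spaces, exploiting the exponential contraction of $e^{-t(m^2-\Delta)}$. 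These bounds replace the standard local-in-time CHS estimates.

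The next and decisive step is the a priori bound for $v$. Here one introduces a deterministic scalar quantity $w(t)$---the solution of the resonant equation---whose role is to track the size $\|v(t)\|_{\Cs_x^\eta}$. Heuristically, the resonant equation should take the form $\dot{w} + c_\beta w = \Phi_\beta(w) + F_\beta(t)$, where $c_\beta > 0$ encodes the damping from $m^2$, $\Phi_\beta$ accounts for the effective contribution of the terms $:\!\cos(\beta\Psi)\!:\,\sin(\beta v)$ and $:\!\sin(\beta\Psi)\!:\,\cos(\beta v)$ arising from the angle-sum expansion of $\sin(\beta u)$ (and is globally bounded thanks to the boundedness of sine and cosine), and $F_\beta$ is a bounded forcing built from the stochastic tower. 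I would then prove a comparison-type inequality $\|v(t)\|_{\Cs_x^\eta} \leq w(t) + E(t)$, where $E(t)$ is a random fluctuation controlled by \emph{modified} stochastic objects---stochastic objects that incorporate the bounded functionals $\sin(\beta v)$ and $\cos(\beta v)$ of the solution itself and still satisfy uniform-in-time moment bounds. Because $\Phi_\beta$ is bounded, the resonant ODE immediately gives $w(t) \leq e^{-c_\beta t} w(0) + C_\beta$; taking $p$-th moments and combining with the tower bounds above yields \eqref{eq:sg-gwp}, with the $e^{-cpT}\|u_0\|_{\Cs_x^\eta}^p$ term tracing back to the decay of the linear evolution of the initial datum built into $w(0)$.

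The main obstacle, as flagged by the abstract, is the construction and uniform control of the modified stochastic objects. Beyond the second threshold, one can no longer use trees built solely from the free field $\Psi$: the $v$-dependent factors $\sin(\beta v)$ and $\cos(\beta v)$ must be incorporated directly into the stochastic estimates, which is delicate because $v$ has very low regularity and its stochastic structure is coupled nonlinearly to that of $\Psi$. Proving Gaussian-chaos-type moment bounds for these $v$-dependent objects---uniformly in $t$, and with constants depending only on bounded functionals of $v$ rather than on $v$ itself---while simultaneously ensuring that these estimates feed back consistently into the derivation of the resonant equation, is where the bulk of the analytical work will lie.
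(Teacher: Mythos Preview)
Your proposal has the right high-level ingredients (a resonant equation, modified stochastic objects, exploiting $|e^{i\beta\cdot}|\le 1$), but the way you implement them is structurally different from the paper in a way that leads to a genuine gap.

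In the paper, the resonant equation is \emph{not} a scalar ODE tracking $\|v(t)\|_{\Cs_x^\eta}$. It is a full PDE on spacetime,
\[
(\partial_t-\Delta+m^2)\determ_\varep = \Res_\varep(\determ_\varep),\qquad \determ_\varep(t_0)=u_{0,\varep},
\]
for a function $\determ_\varep$ that \emph{absorbs the initial data} $u_0$. The decomposition is $u_\varep=\Phi_\varep+\determ_\varep+v_\varep$, and the remainder $v_\varep$ has initial data $-\Phi_\varep(t_0)$, which does \emph{not} depend on $u_0$. The modified noises are $\xi_\pm^{\determ,\varep}=e^{\pm i\beta\determ}\,{:}\,e^{\pm i\beta\Phi_\varep}{:}$, i.e.\ the modification is by $e^{\pm i\beta\determ}$ with $\determ$ \emph{deterministic and probabilistically independent of the noise} $\zeta$. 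This is the crucial point: independence of $\determ$ and $\zeta$, together with $|e^{i\beta\determ}|\le 1$, is exactly what lets the CHS stochastic estimates go through uniformly in $\determ$ (hence uniformly in $u_0$). The sublinear bound on $\determ_\varep$ comes from the pointwise inequality $|\sin(\beta(\determ(z)-\determ(z')))|\lesssim |\determ(z)-\determ(z')|^\nu$ with $\nu\in(\tfrac{\beta^2}{2\pi}-2,1)$, which for $\beta^2<6\pi$ allows $\nu<1$; this is a spatial estimate that a scalar norm-tracking ODE would not see.

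Your version puts the modification on the wrong object: you propose modified stochastic objects that incorporate $\sin(\beta v)$ and $\cos(\beta v)$, where $v$ is the \emph{random remainder}. This creates a circularity you will not be able to close. To get moment bounds on such $v$-dependent objects you would need a priori control on $v$; but control on $v$ is precisely what you are trying to derive from the object estimates. Moreover $v$ is not independent of $\zeta$, so the conditioning/independence argument that makes the trivial bound $|e^{i\beta\cdot}|\le1$ useful in the moment calculations is unavailable. The paper avoids all of this by routing the initial data through the deterministic $\determ$, so that (i) the modified model satisfies estimates uniform in $u_0$, and (ii) the local theory for $v$ (whose initial data no longer involves $u_0$) gives bounds uniform in $u_0$. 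The global-in-time statement is then obtained by a short iteration argument, not by a direct global energy/ODE inequality.
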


\begin{remark} We make the following remarks regarding Theorem \ref{thm:sg-gwp}.
\begin{enumerate}
\item Our proof also covers the case where the initial data $u_0$ is not necessarily deterministic, as long as it is independent of $\zeta |_{(0,\infty)}$, i.e., the noise after time $0$. On the other hand, the previously mentioned paper \cite{CdLFW24} (which covers sine-Gordon up to roughly $4.528\pi$) can handle anticipative initial data.
\item While Theorem \ref{thm:sg-gwp} only concerns the massive case $m^2>0$, our arguments can also be used in the massless case $m^2=0$. In the massless case, a simple modification of our argument yields the global well-posedness of \eqref{eq:mSG}, albeit only with polynomial-in-time bounds. Provided that one only considers the mean-zero component of the solution $u$, however, the polynomial-in-time bounds can be upgraded to uniform-in-time bounds. For this, one first has to note that, in the massless case, \eqref{eq:mSG} is invariant under the gauge-transformation $u \mapsto u + \frac{2\pi}{\beta} n$, where $n\in \mathbb{Z}$. Similar as for the Abelian-Higgs model in \cite{BC24}, it is then possible to obtain uniform-in-time estimates of $u$ in gauge-invariant norms, and these norms control the mean-zero component of $u$.
\item By combining the uniform-in-time estimate from \eqref{eq:sg-gwp}  with a Krylov-Bogolyubov argument (see e.g.~ \cite{dPZ96,TW18}), it should be possible to construct the sine-Gordon EQFT on $\T^2$ for $\beta^2 < 6\pi$. The recent paper \cite{GM2024} has already constructed the sine-Gordon EQFT in infinite volume (i.e., on~$\R^2$), also up to $6\pi$. It would be interesting to try to extend our result to infinite volume, where (as far as we can tell) even showing local existence is unclear. 
\item The threshold $6\pi$ that we achieve is the same one reached in the previously mentioned work \cite{GM2024}, as well as the work \cite{BB21}, which shows that the massive sine-Gordon model satisfies a log-Sobolev inequality. It is interesting that these independent results all stop at the same exact threshold (which is the third\footnote{\cite{GM2024} refer to $6\pi$ as the second threshold, but it depends on how one counts. Based off the formula \eqref{eq:thresholds}, which previously appeared as \cite[(1.3)]{CHS2018}, $6\pi$ is the third threshold.} out of an infinite sequence of thresholds for dynamical sine-Gordon -- recall \eqref{eq:thresholds} and see \cite[Section~1]{CHS2018} for more discussion). The natural followup question is to try to extend any of the results beyond $6\pi$, and ultimately all the way up to $8\pi$, which would cover the full subcritical regime.
\end{enumerate}
\end{remark}

The starting point for the proof of Theorem \ref{thm:sg-gwp} is an idea which originated in our previous article~\cite{BC24} on the global well-posedness of the stochastic Abelian-Higgs model, which is to hide the initial data $u_0$ in the stochastic objects. This leads to modified stochastic objects, and the key point is that these modified objects obey stochastic estimates which display minimal dependence on the initial data. For the sine-Gordon model, we improve on this idea by isolating out a ``resonant piece\revision{''} which turns out to completely govern the evolution of the size of $u$, while the remaining piece is controlled using stochastic estimates which are {\it uniform in the initial data}. To describe these ideas in slightly more detail, we first let the ``resonant piece\revision{''} $\determ_\varep$ be the solution to the following equation:
\begin{equs}\label{intro:eq-theta}
(\ptl_t - \revision{\tfrac{1}{2}}\Delta + m^2) \determ_\varep &= \mc{N}_\varep(\determ_\varep), \\
\determ_\varep(0) &= u_{0, \varep},
\end{equs}
where the nonlinearity $\mc{N}_\varep$ is to be specified. We emphasize that, by definition, $\determ_\varep$ only depends on $u_{0,\varep}$, and is therefore probabilistically independent of the space-time white noise $\zeta$. The probabilistic independence of $\determ_\varep$ and $\zeta$ will be essential for our stochastic estimates of the modified stochastic objects discussed below. For the stochastic Abelian-Higgs model \cite{BC24}, we simply chose $\mc{N}_\varep \equiv 0$ (and the analogue of $\determ$ is denoted by $B$ in that paper). However, in the present paper, we will make a more judicious choice of $\mc{N}_\varep$, and this is one of the main ideas towards the proof of global well-posedness. We will discuss this choice in more detail later, but for now we just mention that with our choice of $\mc{N}_\varep$, the size of $\mc{N}_\varep(\determ_\varep)$ may be bounded using strictly less than one power of $\determ_\varep$ (as long as $\beta^2 < 6\pi$, which is one of several places where we use this assumption), and using this we will be able to directly obtain global well-posedness for $\determ_\varep$. Our Ansatz for $u_\varep$ is then given by
\begin{equs}
u_\varep = \Phi_\varep + \determ_\varep + v_\varep,
\end{equs}
where (as in \cite{CHS2018}) $\Phi_\varep = K \ast \zeta_\varep$, and where the kernel $K$ is as specified in Section \ref{section:setup-and-main} (as usual, it is a slight modification of the heat kernel). The remainder $v_\varep$ satisfies the following equation:
\begin{equation}\label{eq:v-equation}
\begin{aligned}
(\ptl_t - \revision{\tfrac{1}{2}}\Delta + m^2)v_\varep &= \frac{1}{2\icomplex} \big( e^{\icomplex \beta v_\varep} \xi_+^{\determ_\varep, \varep}  - e^{-\icomplex \beta v_\varep}\xi_{-}^{\determ_\varep, \varep} \big) - \mc{N}_\varep(\determ_\varep) + R_\varep, \\
v_\varep(0) &= -\Phi_\varep(0),
\end{aligned}
\end{equation}
where $R_\varep$ is (as usual) a smooth remainder arising from the fact that $K$ is not exactly the heat kernel, but only differs from the heat kernel by a smooth function, and the modified noises $\xi^{\determ_\varep, \varep}_{\pm}$ are defined as
\begin{equs}\label{eq:modified-noise}
\xi_{\pm}^{\determ_\varep, \varep }:= e^{\pm \icomplex \beta \determ_\varep} \biglcol\, e^{\pm \icomplex \beta \Phi_\varep} \bigrcol = C_{\beta, \rho, \varep} e^{\pm \icomplex \beta \determ_\varep}  e^{\pm \icomplex \beta \Phi_\varep},
\end{equs}
where we define the constant
\begin{equs}\label{eq:gmc-renorm-constant}
C_{\beta, \rho, \varep} := \exp\Big(\frac{\beta^2}{2} \E \Phi_\varep(0)^2\Big).
\end{equs}
We previously mentioned that $\determ_\varep$ is globally well-posed. The same is true for $\Phi_\varep$, since it is an explicit stochastic object. Thus, it only remains to estimate $v_\varep$. To do so, we will simply use the local theory, which was established in \cite{CHS2018} by using regularity structures. At this point, we note that our remainder $v_\varep$ differs from the remainder $v_\varep$ defined in \cite{CHS2018} in three essential ways: (1) we use the modified noises $\xi^{\determ_\varep, \varep}_{\pm}$ (2) there is an additional forcing term $\mc{N}_\varep(\determ_\varep)$ (3) the initial data of $v_\varep$ does not depend on $u_0$. These differences all arise from the fact that we isolated out the resonant piece $\determ_\varep$. Note also that the impact of the initial data on the evolution of $v_\varep$ only enters through $\determ_\varep$.

We now discuss the choice of $\mc{N}_\varep$, but postpone the precise definition until Definition \ref{setup:def-resonant-operator} below. The use of the modified noises causes an additional resonance between certain divergent objects appearing in the right hand side of the $v$ equation \eqref{eq:v-equation}. To the reader familiar with \cite{CHS2018}, this additional resonance arises as the difference between the expectations of the two divergent dipoles, which no longer cancel each other because of our use of the modified noise. A natural idea is to define the nonlinearity $\mc{N}_\varep$ so as to cancel out this additional resonance. 

It turns out that by defining the nonlinearity in this way, there is a nice serendipity: the lift of the equation \eqref{eq:v-equation} for $v_\varep$ to the space of modelled distributions turns out to be {\it exactly the same} as the one given in \cite[(2.14)]{CHS2018}. Given this, and the fact that the initial data for $v_\varep$ does not involve $u_0$, one can hope to estimate $v_\varep$ uniformly in $u_0$ just by using the local theory. Towards this end, the last ingredient that we need is the fact that all modified\footnote{We use the adjective ``modified\revision{''} because the objects are defined using the modified noises $\xi^{\determ_\varep, \varep}_{\pm}$.} stochastic objects appearing in the local theory for $v_\varep$ satisfy estimates which are uniform in $u_0$. The proof of this turns out to be a very slight modification of the stochastic estimates from \cite{CHS2018}\footnote{In the case of monopoles and dipoles, it would suffice to follow the more concrete arguments of the earlier paper \cite{CH16}, as we show in Section \ref{section:objects}} and, as we will see, the only additional steps beyond the existing arguments are to use the probabilistic independence of $\determ_\varep$ and $\zeta$ and to use the trivial estimate $|e^{\icomplex \beta \determ_\varep}| \leq 1$.

\begin{remark}\label{remark:resonant-non-resonant-decomp}
We summarize the preceding discussion as follows. Omitting the smoothing parameter $\varep$, we find a nice decomposition $u = \Phi + \determ + v$, where $\Phi$ is the usual linear stochastic object (and so is globally well-posed), and one should think of the deterministic piece $\determ$ as completely capturing the evolution of the size of $u$, while the remainder $v$ describes order-1 probabilistic fluctuations around this size. In particular, global well-posedness for $u$ will follow\footnote{This is slightly misleading since we will still need an iteration argument to get up to time $1$.} once we have global well-posedness for $\determ$, combined with a stochastic estimate for $v$ which is {\it uniform in the initial data $u_0$}. 

A natural question is whether such nice decompositions exist for other singular SPDEs, e.g., Yang--Mills--Higgs, or sine-Gordon beyond $6\pi$. For the latter, once $\beta^2$ goes beyond $6\pi$, it is unclear to us how to estimate both $\determ$ and $v$. It would seem that the nonlinearity $\mc{N}$ appearing in the $\determ$ equation would require more than one power of $\determ$ to estimate, and this is problematic when trying to show global well-posedness for $\determ$. For the remainder $v$, it is unclear to us whether the estimates for all the needed stochastic objects can be made to be uniform in $u_0$.
\end{remark}

We now briefly summarize the structure of the rest of the paper. In Section \ref{section:setup-and-main}, we set some notation which will be used throughout the paper, and then introduce the main ideas which go into the proof of Theorem~\ref{thm:sg-gwp}. In particular, in Section~\ref{section:ansatz}, we discuss in more detail our Ansatz for $u_\varep$ that will enable us to obtain good estimates. Then in Section~\ref{section:modified-model}, we introduce our modification of the neutral BPHZ model from~\cite{CHS2018}, which is used in the local theory, and which we will ultimately be able to estimate uniformly in the initial data. In Section~\ref{section:main-results}, we outline the main propositions about the quantities introduced in Sections~\ref{section:ansatz} and~\ref{section:modified-model}, and then we use these main propositions to give a proof of Theorem \ref{thm:sg-gwp}. Sections \ref{section:resonant}, \ref{section:objects}, and \ref{section:remainder-bound} are devoted to the proofs of the main propositions. \\

\section{Setup and main propositions}\label{section:setup-and-main}

We set some notation which will be used throughout the paper.

\begin{notation}\label{notation:integer-sets}
\revision{For a positive integer $n$, we write $[n] := \{1, \ldots, n\}$.} 
\end{notation}

Next, we recall the definition of H\"{o}lder-Besov spaces.

\begin{definition}[H\"{o}lder-Besov space]
For $\alpha \in \R$, $f \colon\T^2 \ra \C$, we define the norm
\begin{equs}
\|f\|_{\Cs_x^\alpha} := \sup_{N \in \dyadic} \|P_N f\|_{L_x^\infty},
\end{equs}
where $P_N$ denotes the usual Littlewood-Paley projection to frequency scale $N$, see \cite[Section 2.3.1]{BC23}. We then define the corresponding function space $\Cs_x^\alpha = \Cs_x^\alpha(\T^2)$ to be the closure of $C^\infty_x(\T^2)$ with respect to the $\Cs_x^\alpha$-norm.
\end{definition}

Next, fix $\frks = (2, 1, 1)$ the usual parabolic scaling on $\R \times \T^2$. Let $G$ be the heat kernel with mass $m^2$, i.e., the kernel of the operator $(\ptl_t - \revision{\frac{1}{2}}\Delta + m^2)^{-1}$. As usual, we let $K$ be the singular part of $G$, which is chosen such that all integrals of $K$ against  polynomials of (scaled) degree up to $2$ are equal to zero and such that the difference $G-K$ is smooth. We note that $K$ satisfies
\begin{equation}\label{setup:eq-K-estimate}
|K(z)| \lesssim \| z \|_{\frks}^{-2}. 
\end{equation}
For $\varep \in (0, 1]$, we let $\Jc_\varep$ be as in \cite[(3.5)]{HS16}, 
and let $\Jc_\varep^{-}$ be defined as $1/\Jc_\varep$. From \cite[(3.7)]{HS16}, it then follows that uniformly in $\varep \in (0, 1]$, 
\begin{equation}\label{setup:eq-J-estimate}
|\Jc_\varep(z)| \sim (\| z \|_{\frks} + \varep)^{\frac{\beta^2}{2\pi}} \qquad \text{and} \qquad
|\Jc_\varep^-(z)| \sim  (\| z \|_{\frks} + \varep)^{-\frac{\beta^2}{2\pi}}.
\end{equation}
We remark that $\Jc_\varep, \Jc_\varep^{-}$ can also be defined as:
\begin{equs}\label{eq:Jc-covariance}
\Jc_\varep(z) = \E[\xi^\varep_{\pm}(0) \xi^\varep_{\pm}(z)], \quad \Jc_\varep^-(z) = \E[\xi^\varep_{\pm}(0) \xi^\varep_{\mp}(z)],
\end{equs}
where $\xi^\varep_{\pm} := C_{\beta, \rho, \varep} e^{\pm \icomplex \beta \Phi_\varep}$, with $C_{\beta, \rho, \varep}$ as in \eqref{eq:gmc-renorm-constant}. We also fix parameters
\begin{equs}
\kappa &:= 10^{-3} (6\pi - \beta^2), \label{eq:kappa}\\
\bar{\beta} &:= \frac{\beta^2}{4\pi} + \kappa, \label{eq:bar-beta}\\
\eta &\in \Big(\frac{\bar{\beta}}{2} - 1, 0\Big), \label{eq:eta}
\end{equs}
to be used in various places later on. 

Following \cite{CHS2018}, we assume that the mollifier $\rho \colon \R \times \T^2 \ra \R$ is supported on the ball of radius $1$, integrates to $1$, and satisfies $\rho(t, x) = \rho(t, -x)$ for all $t \in \R$, $x \in \T^2$. In this paper, we will make the additional assumption that $\rho$ is supported at negative times, i.e., $\rho = 0$ on $[0, \infty) \times \T^2$. 

\begin{remark}\label{remark:noise-only-positive-times-enter}
Due to our assumptions on $\rho$, in particular the fact that it is supported at negative times, we see that only $\zeta |_{\revision{[}0, \infty)}$ enters into the equation for $u_\varep$. It follows that the limit $u$ itself is also only a function of $\zeta |_{\revision{[}0, \infty)}$ and the initial data $u_0$. This will be convenient in our iteration argument later on -- see the proof of Proposition \ref{prop:iteration-estimate} in Section \ref{section:main-results}.
\end{remark}

\subsection{Ansatz}\label{section:ansatz}

In this section, we go into more detail about our Ansatz for the solution $u$ to 
\begin{equs}\label{eq:SG-general-starting-time}
(\ptl_t - \revision{\tfrac{1}{2}}\Delta + m^2) u &= \biglcol \, \sin(\beta u)\,\bigrcol + \zeta, ~~u(t_0) = u_0.
\end{equs}
Here, we take a general starting time $t_0$, which will be notationally convenient for our iteration argument later. Throughout, we assume that $u_0$ is independent of $\zeta |_{[t_0, \infty)}$, which is slightly more general than the assumption in Theorem \ref{thm:sg-gwp} that $u_0$ is deterministic. The solution $u$ is defined as the limit in $C_t^0 \Cs_x^{\eta}$ of the solutions $u_\varep$ to the regularized equations
\begin{equs}\label{eq:smoothed-SG-general-starting-time}
(\ptl_t - \revision{\tfrac{1}{2}}\Delta + m^2) u_\varep &= \biglcol \, \sin(\beta u_\varep) \, \bigrcol + \zeta_\varep, \quad u_\varep(t_0) = u_{0, \varep}.
\end{equs}
(Recall that the renormalized sine is defined in \eqref{eq:renormalized-sine}, and that $u_{0, \varep}$ is some mollification of $u_0$ at scale $\varep$.) 

\begin{remark}\label{remark:replacement-noise}
By Remark \ref{remark:noise-only-positive-times-enter}, upon replacing $\zeta$ by the noise $\tilde{\zeta} := \bar{\zeta}_{< t_0} + \zeta |_{[t_0, \infty)}$, where $\bar{\zeta}_{< t_0}$ is a space-time white noise on $(-\infty, t_0) \times \T^2$ which is independent of everything else, we have that $u$ is equal to the limit in $C_t^0 \Cs_x^{\eta}$ of the solutions $\tilde{u}_\varep$ to the same regularized equations, except now with $\tilde{\zeta}_\varep$ in place of $\zeta_\varep$. Since $u_0$ is assumed to be independent of $\zeta |_{[t_0, \infty)}$, it follows that $u_0$ is independent of $\tilde{\zeta}$. Thus by possibly modifying $\zeta$ at times before $t_0$, we may always ensure that $u_0$ is independent of $\zeta$, without changing $u$. This is a key point that we will use in our iteration argument later on.
\end{remark}

Next, we give the precise form of the nonlinearity $\mc{N}_\varep$, 
which was previously mentioned in the introduction and which plays an important role in the proof of Theorem~\ref{thm:sg-gwp}. We will now denote this nonlinearity by $\Res_\varep$ in the following, short for ``resonant operator\revision{''}.

\begin{definition}[Resonant operator]\label{setup:def-resonant-operator}
For any $\varep\in (0,1]$, $t_0 \geq 0$, $T>0$, and function $\determ \colon [t_0,t_0 +T]\times \T^2 \rightarrow \R$, we define the function $\Res_\varep\big( \determ \big)\colon  [t_0,t_0+T]\times \T^2 \rightarrow \R$ by
\begin{equation}\label{setup:eq-resonant-operator}
\Res_\varep \big( \determ \big) (z) := -\frac{\beta}{2} \int \dzprime K(\zprime,z) \Jc_\varep^{-}(z-\zprime) \sin \big( \beta (\determ(z)-\determ(\zprime)) \big). 
\end{equation}
\end{definition}

With this definition in hand, our Ansatz for $u_\varep$ is as follows. First, let
\begin{equs}\label{eq:linear-part}
\Phi_\varep := K \ast \zeta_\varep, \quad R_\varep := (G-K) \ast \zeta_\varep. 
\end{equs}
We then let
\begin{equs}
u_\varep = \Phi_\varep + \determ_\varep + v_\varep,
\end{equs}
where the ``resonant piece\revision{''} $\determ_\varep$ satisfies the equation 
\begin{equation}\label{setup:eq-resonant-equation}
\begin{aligned}
(\partial_t - \revision{\tfrac{1}{2}}\Delta + m^2) \determ_\varep &= \Res_\varep ( \determ_\varep ), \\
\determ (t_0)&=u_{0, \varep}, 
\end{aligned}
\end{equation}
and the remainder $v_\varep$ satisfies the equation (recall the modified noises $\xi^{\determ_\varep, \varep}_{\pm}$ from \eqref{eq:modified-noise})
\begin{equation}
\begin{aligned}
(\ptl_t - \revision{\tfrac{1}{2}}\Delta + m^2) v_\varep &= \frac{1}{2\icomplex} \big( e^{\icomplex \beta v_\varep} \xi_+^{\determ_\varep, \varep}  - e^{-\icomplex \beta v_\varep}\xi_{-}^{\determ_\varep, \varep} \big) - \Res_\varep(\determ_\varep) + R_\varep, \\
v_\varep(t_0) &= -\Phi_\varep(t_0, \cdot).
\end{aligned}
\end{equation}
Following \cite{CHS2018}, we apply a pre-processing step and further write $v_\varep = G v_\varep(0) + w_\varep$, where $w_\varep$ then solves
\begin{align}
(\ptl_t - \revision{\tfrac{1}{2}}\Delta + m^2) w_\varep &= \frac{1}{2\icomplex} \Big(e^{\icomplex \beta G v_\varep(0)} e^{\icomplex \beta w_\varep} \xi_+^{\determ_\varep, \varep} - e^{-\icomplex \beta G v_\varep(0)} e^{-\icomplex \beta w_\varep} \xi_-^{\determ_\varep, \varep}\Big)  -\Res_\varep(\determ_\varep) + R_\varep, \notag \\
w_\varep(t_0) &= 0.\label{eq:w-equation}
\end{align}
In the following, we refer to \eqref{setup:eq-resonant-equation} as the resonant equation. We extend $\determ_\varep$ to $\R \times \T^2$ by defining $\determ_\varep(t) := \determ_\varep(t_0)$ for $t < t_0$. Since $\determ_\varep(t_0) = u_{0, \varep}$ is continuous (in fact, smooth), we have that the extension $\determ_\varep \colon\R \times \T^2 \ra \C$ is continuous. In summary, our final Ansatz for $u_\varep$ is
\begin{equs}\label{eq:ansatz}
u_\varep = \Phi_\varep + \determ_\varep - G \Phi_\varep(t_0) + w_\varep.
\end{equs}
Compared with \cite{CH16, CHS2018}, the main difference in our Ansatz is the isolation of the resonant piece $\determ_\varep$, which then affects the equation for $v_\varep$, and thus also the equation for $w_\varep$.

\subsection{Modified model}\label{section:modified-model}

In this subsection, we introduce our modified model. One of the key ideas of this paper is that our modified model will satisfy stochastic estimates which are uniform (in our modification parameter $\determ$), and this will allow us to show estimates for $v$ which are uniform in the initial data $u(0)$ (as claimed in Remark \ref{remark:resonant-non-resonant-decomp}). We will assume that $\beta^2 \in [\frac{16\pi}{3}\pi, 6\pi)$ in the ensuing discussion, as the case $\beta^2 \in [4\pi, \frac{16\pi}{3})$ is strictly simpler. 

\begin{remark}\label{remark:blackbox-0}
Because of our use of the modified noise $\xi^{\determ_\varep, \varep}_{\pm}$ from \eqref{eq:modified-noise}, which in particular is not stationary in law, it is not completely clear to us how to fit our setting into the general black-box results of \cite{H14, CH16, BHZ19}. See also Remark \ref{remark:blackbox}. There are some results which handle non-stationary noises, see e.g., \cite{BB21Locality, HS2023}.~However, the modified noise is also extremely non-Gaussian, and it would seem that (similar to what is done in \cite{CHS2018}) one would have to adapt the analytic part of the black-box to handle this. On the other hand, there are not too many objects when $\beta^2 < 6\pi$, so it is feasible to be very concrete and perform the needed calculations by hand, which is what we mostly do (especially in Appendices \ref{appendix:modified-model-calculations} and \ref{appendix:more-calculations}).
\end{remark}

First, we review the regularity structure preliminaries from \cite{CHS2018} which are needed to show local existence for the dynamical sine-Gordon model. We closely follow the beginning of \cite[Section~2]{CHS2018}. Recall the parabolic scaling $\mathfrak{s} = (2, 1, 1)$. Recall from \eqref{eq:bar-beta} that $\bar{\beta} = \frac{\beta^2}{4\pi} + \kappa$, where $\kappa = 10^{-3}(6\pi - \beta^2)$ is as in \eqref{eq:kappa}. We further fix a parameter $\mu = \bar{\beta} + \kappa = \frac{\beta^2}{4\pi} + 2\kappa$. Let $\mc{T}$ be as defined in \cite[(2.3)]{CHS2018}, i.e., it is the collection of all rooted decorated trees $T^{\mathfrak{n} \mathfrak{l}}$ with homogeneity $|T^{\mathfrak{n} \mathfrak{l}}|_{\mathfrak{s}} < \mu$. Here, given a tree $T$, we denote its set of nodes by $N(T)$, its root by $\varrho_T \in N(T)$, and its set of edges by $K(T)$. The decorations $\mathfrak{n}, \mathfrak{l}$ are maps $\mathfrak{n} \colon N(T) \ra \N^3$ and $\mathfrak{l} \colon N(T) \ra \{+, 0, -\}$. We further define $L(T) := \{u \in N(T) \colon \mathfrak{l}(u) \neq 0\}$. The homogeneity of a tree is
\begin{equs}
|T^{\mathfrak{n} \mathfrak{l}}|_{\mathfrak{s}} := 2|K(T)| - \bar{\beta} \revision{|}L(T)\revision{|} + \sum_{u \in N(T)} |\mathfrak{n}(u)|_{\mathfrak{s}}.
\end{equs}
Let $\mathscr{T}$ be the free vector space generated by $\mc{T}$, and let $(\mathscr{T}, A, G)$ be the associated regularity structure (see the end of \cite[Section 2.2]{CHS2018}). When the polynomial decoration is zero, we will often draw trees $T^{0, \mathfrak{l}}$ as actual trees, e.g., 
\begin{equs}
\monop, \quad \monom, \quad \dippm, \quad \vtripolemmp, \quad \text{and} \quad \ltripolepmp.
\end{equs}
We will often also use the alternative notation built out of $X, \Xi$, and $\mc{I}$ to represent such trees. For instance, the trees above would be written
\begin{equs}
\Xi_+, \quad \Xi_-, \quad \Xi_+ \mc{I} \Xi_-, \quad  \Xi_- \mc{I} \Xi_+ \mc{I} \Xi_+, \quad \text{and} \quad \Xi_+ \mc{I} (\Xi_- \mc{I} \Xi_+).
\end{equs}
This notation has the advantage of also being able to encode polynomial decorations. Let $\mc{T}^-$ be the trees in $\mc{T}$ with negative homogeneity. Since we are assuming $\beta^2 \in [\frac{16\pi}{3}, 6\pi)$, we have that \revision{(recall from Notation \ref{notation:integer-sets} that $[n] = \{1, \ldots, n\}$)}
\begin{equs}
\mc{T}^- = \Big\{\Xi_{s_1}, X_j \Xi_{s_1}, \Xi_{s_1} \mc{I}\Xi_{s_2}, \Xi_{s_1} \mc{I} \Xi_{s_2} \mc{I} \Xi_{s_3}, \Xi_{s_1} \mc{I}(\Xi_{s_2} \mc{I} \Xi_{s_3}) : j \in [2], s_1, s_2, s_3 \in \{+, -\} \Big\}.
\end{equs}
In the following, we will use the term ``monopole\revision{''} (resp. ``dipole\revision{''}) to denote one of the trees:
\begin{equs}
\monop, ~~\monom, \quad \bigg( \text{resp.   } \dipmm, ~~\dippp, ~~\dipmp, ~~\dippm \bigg),
\end{equs}
and the term ``tripole\revision{''} to denote one of the following trees:
\begin{equs}
\ltripoleppp, ~~ \ltripoleppm, ~~ \ltripolepmp, ~~ \ltripolempp, ~~ \ltripolepmm, ~~ \ltripolempm, ~~ \ltripolemmp, ~~ \ltripolemmm, ~~ \vtripoleppp, ~~\vtripoleppm, ~~ \vtripolepmp, ~~ \vtripolempp, ~~ \vtripolepmm, ~~ \vtripolempm, ~~ \vtripolemmp, ~~ \vtripolemmm.
\end{equs}
We note that the set $\mc{T}^-$ consists precisely of all monopoles, dipoles, and tripoles, combined with the decorated monopoles $X_j \Xi_s$, $j \in [2], s \in \{+, -\}$.

Having reviewed the necessary preliminaries, we now introduce our modified (pre-)model. In the following, let $\determ \colon \R \times \T^2 \ra \C$ be a continuous function. Later on, we will take $\determ = \determ_\varep$, where $\determ_\varep$ is as in \eqref{setup:eq-resonant-equation}, which is why we use the notation $\determ$. Recalling $\Phi_\varep$ from \eqref{eq:linear-part}, we let (clearly this is inspired by \eqref{eq:modified-noise})
\begin{equs}
\xi_+^{\determ, \varep} := e^{\icomplex \beta \determ} \biglcol\, e^{\icomplex \beta \Phi_\varep} \,\bigrcol, \quad \xi_-^{\determ, \varep} := e^{-\icomplex \beta \determ} \biglcol\, e^{-\icomplex \beta \Phi_\varep} \,\bigrcol.
\end{equs}
We now define a modified pre-model $\premodel^{\determ, \varep}$ on $\mc{T}^-$ as follows. First, the action on the decorated monopoles $X_j \Xi_s$ is determined because we want $\premodel^{\determ, \varep}$ to be admissible, i.e., so that $\premodel^{\determ, \varep}(X_j \Xi_s) = \cdot_j \premodel^{\determ, \varep} \Xi_s$. Thus, it remains to specify the modified pre-model on the monopoles, dipoles and tripoles. To save space, we note that each tree has a mirror image obtained by flipping the sign of every node, and so we only specify the pre-model on half of the trees. The pre-model on the remaining half can be obtained by flipping all pluses to minuses (and vice versa) in the definitions. Next, the reader familiar with \cite{CHS2018} should keep in the mind that in the case $\theta \equiv 0$, the modified pre-model $\premodel^{\determ, \varep}$ is precisely the pre-model which induces the neutral BPHZ model defined in \cite{CHS2018}. With these considerations in mind, we let \revision{(in the following, all expectations are with respect to the Gaussian noise $\xi$ only)}
\begin{align}
\premodel^{\determ, \varep} \monop &:= \xi_+^{\determ, \varep}, \label{eq:neutral-bphz-premodel-begin-def}\\
\premodel^{\determ, \varep} \dippm &:= \xi_+^{\determ, \varep} (K \ast \xi_-^{\determ, \varep}) - \E[\xi_+^{\determ, \varep} (K \ast \xi_-^{\determ, \varep})], \quad  \premodel^{\determ, \varep} \dippp := \xi_+^{\determ, \varep} (K \ast \xi_+^{\determ, \varep}), \\
\premodel^{\determ, \varep} \vtripoleppp &:= \xi_+^{\determ, \varep} (K \ast \xi_+^{\determ, \varep})^2, \\
\premodel^{\determ, \varep} \vtripolepmm &:= \xi_+^{\determ, \varep} (K \ast \xi_-^{\determ, \varep})^2 - 2 (K \ast \xi_-^{\determ, \varep}) \E[\xi_+^{\determ, \varep} (K \ast \xi_-^{\determ, \varep})],\\
\premodel^{\determ, \varep} \vtripolepmp &:= \premodel^{\determ, \varep} \vtripoleppm := \big(\xi_+^{\determ, \varep} (K \ast \xi_-^{\determ, \varep}) - \E[\xi_+^{\determ, \varep} (K \ast \xi_-^{\determ, \varep})]\big) (K \ast \xi_+^{\determ, \varep}), \\
\premodel^{\determ, \varep} \ltripoleppp &:= \xi_+^{\determ, \varep} K \ast (\xi_+^{\determ, \varep} (K \ast \xi_+^{\determ, \varep})), \\
\premodel^{\determ, \varep} \ltripoleppm &:= \xi_+^{\determ, \varep} K \ast \big(\xi_+^{\determ, \varep} (K \ast \xi_-^{\determ, \varep}) - \E[\xi_+^{\determ, \varep} (K \ast \xi_-^{\determ, \varep})]\big), \\
\premodel^{\determ, \varep} \ltripolempp &:= \xi_-^{\determ, \varep} K \ast (\xi_+^{\determ, \varep} (K \ast \xi_+^{\determ, \varep})) - \E[\xi_-^{\determ, \varep} (K \ast \xi_+^{\determ, \varep})] (K \ast \xi_+^{\determ, \varep}), \\
\premodel^{\determ, \varep} \ltripolepmp &:= \xi_+^{\determ, \varep} K \ast \big(\xi_-^{\determ, \varep} (K \ast \xi_+^{\determ, \varep}) - \E[\xi_-^{\determ, \varep} (K \ast \xi_+^{\determ, \varep})]\big) - \E[\xi_+^{\determ, \varep} (K \ast \xi_-^{\determ, \varep})] (K \ast \xi_+^{\determ, \varep}). \label{eq:neutral-bphz-premodel-end-def}
\end{align}
We extend $\premodel^{\determ, \varep}$ to the other trees in $\mc{T}$ in the usual way, so that $\premodel^{\determ, \varep}$ is admissible (see e.g. the proof of \cite[Theorem 10.7]{H14}). In the following, let $(\{\model_x^{\determ, \varep}\}_{x \in \R \times \T^2}, \{\Gamma_{xy}^{\determ, \varep}\}_{x, y \in \R \times \T^2}) = \mc{Z}(\premodel^{\determ, \varep})$ be the families of recentered and recentering maps obtained from the modified premodel $\premodel^{\determ, \varep}$. 

\begin{remark}\label{remark:blackbox}
As far as we are aware, due to our non-stationary noises $\xi_{\pm}^{\determ, \varep}$, the black-box results of \cite{BHZ19} do not apply here to automatically conclude that $\mc{Z}(\premodel^{\determ, \varep})$ is indeed a model. The fact that we (almost surely) do in fact have a model will follow from the stochastic estimates of Proposition \ref{prop:stochastic-estimates}.
\end{remark}

In Appendix \ref{appendix:modified-model-calculations}, we will explicitly compute the maps $\{\model^{\determ, \varep}_x\}_{x \in \R \times \T^2}$, which in particular will show that the case $\theta \equiv 0$ gives the neutral BPHZ model of \cite{CHS2018}. The formula we obtain for $\{\model^{\determ, \varep}_x\}_{x \in \R \times \T^2}$ will be needed in proving the stochastic estimates for our modified model. 

\subsection{Main results}\label{section:main-results}

In this section, we introduce the main intermediate results needed in the proof of Theorem \ref{thm:sg-gwp}, and then give a proof of Theorem \ref{thm:sg-gwp} using these intermediate results. Our first main result concerns the global well-posedness of the solution to the resonant equation \eqref{setup:eq-resonant-equation}, and its proof is the content of Section~\ref{section:resonant}.

\begin{proposition}[Resonant equation]\label{setup:prop-resonant-equation} 
Let $4\pi \leq \beta^2<6\pi$, let $0<T\leq T_0$, where $T_0$ is sufficiently small depending on $m$, $\beta$ and $\eta$, and let $u_0 \in \Cs_x^{\eta}$. 
For all $\varep \in (0,1]$, the resonant equation \eqref{setup:eq-resonant-equation} then has a unique solution $\determ_\varep$ in the function space $\Snorm([0,T])$, which is as in Definition \ref{resonant:def-norms} below. Furthermore, for all $\delta>0$, we have the uniform estimate
\begin{equs}\label{setup:eq-resonant-equation-estimate}
\sup_{\varep \in (0,1]} \|\determ_\varep(T)\|_{\Cs_x^{\eta}} \leq e^{-m^2 T} \| u_0 \|_{\Cs_x^{\eta}} + C T^{2-\frac{\beta^2}{4\pi}+\frac{\eta}{2}} \big( \| u_0 \|_{\Cs_x^{\eta}} +1 \big)^{\frac{\beta^2}{2\pi}-2+\delta}, 
\end{equs}
where $C$ depends only on $m$, $\beta$, $\delta$, and  $\eta$. 
\end{proposition}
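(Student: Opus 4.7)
The plan is to formulate the resonant equation~\eqref{setup:eq-resonant-equation} in mild form,
\begin{equs}
\determ_\varep(t) = G(t) u_{0,\varep} + \int_{0}^{t} G(t-s) \Res_\varep(\determ_\varep)(s) \, ds,
\end{equs}
(taking $t_0 = 0$ for notational convenience) and to solve it by Banach contraction on a ball in $\Snorm([0,T])$. Anticipating the nonlinear estimate below, I expect $\Snorm$ to combine control in $C_t^0 \Cs_x^\eta$ with a parabolically-weighted positive H\"older seminorm of the schematic form $\sup_{t \in (0,T]} t^{(s-\eta)/2}[\determ(t)]_{C_x^s}$ for some $s$ strictly less than one but sufficiently close to it. The positive-regularity component is available via parabolic Schauder acting on $u_{0,\varep}\in \Cs_x^\eta$.

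For the core nonlinear estimate I would use the bound $|\sin(\beta(\determ(z)-\determ(z')))| \lesssim |\determ(z)-\determ(z')|^\alpha$ with the specific choice $\alpha := \beta^2/(2\pi) - 2 + \delta$, which lies in $[0,1]$ precisely because of the hypothesis $4\pi \leq \beta^2 < 6\pi$. Combining this with the H\"older estimate $|\determ(z)-\determ(z')| \leq [\determ(t)]_{C_x^s} \|z-z'\|_\frks^s$, the bound $|K(z',z)| \lesssim \|z-z'\|_\frks^{-2}$ from~\eqref{setup:eq-K-estimate}, and the uniform-in-$\varep$ bound $|\Jc_\varep^-(z-z')| \lesssim \|z-z'\|_\frks^{-\beta^2/(2\pi)}$ from~\eqref{setup:eq-J-estimate}, I would obtain a pointwise estimate of the form
\begin{equs}
|\Res_\varep(\determ)(z)| \lesssim [\determ(t)]_{C_x^s}^{\alpha} \int \|z-z'\|_\frks^{-2 - \beta^2/(2\pi) + s\alpha}\, dz'.
\end{equs}
Since $K$ has compact support, integrability in the parabolically $4$-dimensional ambient space would force $s\alpha > \beta^2/(2\pi) - 2$, i.e., $s$ close to $1$ once $\delta$ is small, which is consistent with what Schauder supplies. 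The decisive feature is that $\alpha < 1$ strictly, so the nonlinearity is \emph{sublinear} in $\determ$; this is precisely why the scheme closes when $\beta^2 < 6\pi$. A parallel Lipschitz estimate for $\Res_\varep(\determ_1) - \Res_\varep(\determ_2)$ would follow from the identity $\sin a - \sin b = 2 \sin(\tfrac{a-b}{2}) \cos(\tfrac{a+b}{2})$ together with the same kernel bookkeeping, yielding contraction on a small ball in $\Snorm$.

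Finally, to extract the quantitative $\Cs_x^\eta$-bound at time $T$, I would apply the $\Cs_x^\eta$-norm to the Duhamel formula: the linear term $G(T) u_{0,\varep}$ contributes $e^{-m^2 T} \|u_0\|_{\Cs_x^\eta}$ via the semigroup estimate, while $\int_0^T \|G(T-s) \Res_\varep(\determ_\varep)(s)\|_{\Cs_x^\eta}\, ds$, after inserting the nonlinear estimate, placing $\Res_\varep(\determ_\varep)(s)$ in a suitable negative-index H\"older--Besov space, and integrating against the resulting singular-in-time weight, should produce the claimed power $T^{2-\beta^2/(4\pi)+\eta/2}(\|u_0\|_{\Cs_x^\eta}+1)^{\beta^2/(2\pi)-2+\delta}$. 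The hardest part will be the careful balancing of three competing exponents---the positive regularity $s$ needed for pointwise kernel integrability, the negative regularity $\eta$ one must propagate in time, and the resulting power of $T$---in such a way that the $T$-exponent remains strictly positive, which is exactly where the restriction $\beta^2 < 6\pi$ genuinely bites. Uniformity in $\varep$ should then be a payoff rather than a difficulty, since every bound above uses only the uniform-in-$\varep$ estimate~\eqref{setup:eq-J-estimate} on $\Jc_\varep^-$ and none exploits positivity of $\varep$.
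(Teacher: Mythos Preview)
Your overall strategy matches the paper's: mild formulation, contraction on a ball in $\Snorm([0,T])$, the key sublinear bound $|\sin(\cdot)| \lesssim |\cdot|^{\alpha}$ with $\alpha = \beta^2/(2\pi)-2+\delta < 1$, and uniformity in $\varep$ via \eqref{setup:eq-J-estimate}. The Lipschitz estimate for contraction and the final $T$-power bookkeeping are also in line with what the paper does.

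There is, however, a concrete gap in your choice of $\Snorm$. You propose a purely \emph{spatial} seminorm $\sup_t t^{(s-\eta)/2}[\determ(t)]_{C_x^s}$ and then write $|\determ(z)-\determ(z')| \leq [\determ(t)]_{C_x^s}\|z-z'\|_\frks^s$. But in $\Res_\varep$ the kernel $K(z',z)$ is a space-time convolution, so $z=(t,x)$ and $z'=(s,y)$ differ in the time variable as well; a spatial H\"older seminorm at the single time $t$ does not control this increment. The paper instead builds into $\Snorm$ the full space-time Lipschitz seminorm
\begin{equation*}
\sup_{\substack{0<s<t\leq T\\ x,y\in\T^2}} s^{\frac{1-\eta}{2}}\,\frac{|\determ(z)-\determ(z')|}{\|z-z'\|_\frks},
\end{equation*}
which is exactly what parabolic Schauder produces from $\Cs_x^\eta$ data, and correspondingly places $\Res_\varep(\determ)$ in the time-weighted $L^\infty_x$ norm $\sup_t t^{(1-\eta)/2}\|F(t)\|_{L^\infty_x}$ rather than a negative-index Besov space. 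With this correction your outline goes through essentially verbatim; the integrability condition becomes simply $\nu>\beta^2/(2\pi)-2$ (no auxiliary exponent $s$ needed), and the final $T$-exponent drops out as claimed.
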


We remark that the condition $\beta^2 \geq 4\pi$ in Proposition \ref{setup:prop-resonant-equation} has only been imposed since, in the case $\beta^2<4\pi$, the exponents in \eqref{setup:eq-resonant-equation-estimate} need to take a slightly different form. Of course, the case $\beta^2<4\pi$ is much easier than the case $4\pi \leq \beta^2 < 6\pi$, since then the expression $K(\zprime,z) \Jc_\varep^{-}(z-\zprime)$ from \eqref{setup:eq-resonant-operator} is then integrable in~$\zprime$. We also emphasize that, since $\beta^2<6\pi$, the exponent of the last term in \eqref{setup:eq-resonant-equation-estimate} can be made strictly smaller than one. 

Next, we introduce our main result regarding the stochastic estimates for our modified model. It is the analog of \cite[Theorem 2.8]{CHS2018} (except we restrict to $\beta^2 < 6\pi$, as is the case throughout this paper). Its proof is the content of Section \ref{section:objects} and Appendix \ref{section:appendix-moments}. As we will see in those sections, the proof of this proposition is a minor modification of the proof of \cite[Theorem~2.8]{CHS2018}, and the modification essentially boils down to the trivial estimate $|e^{\icomplex \beta \determ}| \leq 1$.

\begin{proposition}[Stochastic estimates]\label{prop:stochastic-estimates}
Let $\beta^2 < 6\pi$. Let $\tau \in \mc{T}^-$. For any $p \geq 1$, there exists $C_{\tau, p}$ such that
\begin{equs}
\E \Big[ \Big| \model^{\determ, \varep}_z \tau (\psi^\lambda_z)\Big|^{2p} \Big] \leq C_{\tau, p} \lambda^{2p (|\tau|_{\mathfrak{s}} + \delta)},
\end{equs}
for some sufficiently small $\delta > 0$, uniformly in continuous functions $\determ \colon \R \times \T^2 \ra \C$, $\varep \in (0, 1]$, $\lambda \in (0, 1]$, and all continuous test functions $\psi$ supported on the unit ball in $\R \times \T^2$ with $L^\infty$ norm bounded by $1$.
\end{proposition}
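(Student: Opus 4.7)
The plan is to prove the proposition by explicit case analysis on each tree in $\mc{T}^-$, using the observation that the modified pre-model $\premodel^{\determ,\varep}$ differs from the neutral BPHZ pre-model of \cite{CHS2018} only by insertion of the unit-modulus factors $e^{\pm \icomplex \beta \determ}$ at each noise leaf. Since the proposition treats $\determ$ as a deterministic continuous function, the expectation is taken only over the Gaussian field $\Phi_\varep$, and these phase factors enter as bounded multipliers of modulus exactly $1$.

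First, I would invoke the explicit formulas for the recentered model $\model_z^{\determ,\varep} \tau$ derived in Appendix \ref{appendix:modified-model-calculations}. For each monopole, decorated monopole, dipole, and tripole $\tau$, the resulting expression is a sum of integrals whose integrand is a pure phase $e^{\icomplex \beta \sum_j \sigma_j \determ(z_j)}$ (with signs $\sigma_j \in \{+,-\}$ dictated by the decorations of the noise leaves and of the basepoint $z$) times a Gaussian kernel expression $Q_\varep$ built from the Wick exponentials $\xi^\varep_\pm = C_{\beta,\rho,\varep} e^{\pm \icomplex \beta \Phi_\varep}$, the kernel $K$, and the recentering counterterms. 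Crucially, $Q_\varep$ coincides with the analogous expression for the unmodified neutral BPHZ model of \cite{CHS2018}, up to the $\determ$-dependent phase sitting inside each integral.

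Next, I would test against $\psi^\lambda_z$ and compute the $2p$-th moment by expanding each Wick exponential product into a Gaussian chaos and applying Wick's theorem. Each chaos pairing produces an integral whose integrand is a product of the covariance kernels $\Jc_\varep, \Jc_\varep^-$, integrated copies of $K$, evaluations of $\psi^\lambda_z$, and bounded phase factors $e^{\icomplex \beta \sum \sigma_j \determ(z_j)}$. Applying the trivial bound $|e^{\icomplex \beta \determ}| \leq 1$ pointwise eliminates the phase factors from the absolute value of the integrand, so each integral is dominated by its $\determ \equiv 0$ counterpart. At this point the bound $\lambda^{2p(|\tau|_{\mathfrak{s}} + \delta)}$ follows directly from the stochastic estimates for the neutral BPHZ model, which were established by hand in \cite{CH16} for the monopoles and dipoles and by regularity-structure techniques in \cite{CHS2018} for the tripoles.

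The main technical subtlety lies in the recentering for the dipole and tripole cases, since counterterms such as $\E[\xi_+^{\determ,\varep}(K \ast \xi_-^{\determ,\varep})](z) = \int K(z-z') e^{\icomplex \beta(\determ(z')-\determ(z))} \Jc_\varep^-(z-z') dz'$ depend on $\determ$ and therefore cannot be factored out as a single constant before the Wick expansion. The $\determ$-phase must be carried inside each integrand until the Wick contraction produces an integrable kernel, at which point the phase can be discarded by absolute value. For the most singular tripoles such as $\ltripolepmp$, where two nested recentering subtractions must cooperate to produce integrability, this bookkeeping is where I expect the bulk of the work to lie. Nevertheless, because the kernel bounds \eqref{setup:eq-K-estimate} on $K$ and \eqref{setup:eq-J-estimate} on $\Jc_\varep^{-}$ are $\determ$-independent and the phase factors never obstruct any cancellation, the argument should reduce cleanly to the one already in \cite{CHS2018}.
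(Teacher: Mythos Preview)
Your proposal is correct and follows essentially the same strategy as the paper: carry the unit-modulus phase factors $e^{\pm \icomplex \beta \determ}$ through the moment computation until reaching the point where the kernel estimates of \cite{HS16,CHS2018} are applied, then discard them via $|e^{\icomplex \beta \determ}|\leq 1$. One terminological correction: the moments are not computed by a Gaussian-chaos/Wick expansion (the noises $\xi^\varep_\pm$ are imaginary multiplicative chaos, not polynomials of a Gaussian) but rather via the explicit identity $\E\big[\prod_j \xi^\varep_{\sigma_j}(z_j)\big]=\prod_{j<k}\Jc_\varep^{\operatorname{sign}(\sigma_j\sigma_k)}(z_j-z_k)$, with the sum you describe arising instead from expanding the $2p$-th power of the multi-term model expression and the forest/cut-set combinatorics of \cite{CHS2018}; the paper inserts the phase bound at the analog of \cite[Lemma~5.5]{CHS2018}, after which the argument is literally identical to the $\determ\equiv 0$ case.
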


As alluded to in Remark \ref{remark:resonant-non-resonant-decomp}, by combining the definition of $\determ_\varep$ as the solution to \eqref{setup:eq-resonant-equation} and the stochastic estimates of Proposition \ref{prop:stochastic-estimates}, we will be able to show the following bound for the remainder $w_\varep$, which crucially is uniform in the initial data. The proof of this estimate is in Section \ref{section:remainder-bound}.

\begin{proposition}[Remainder estimate]\label{prop:remainder}
For all $q \geq 1$, there exists a constant $A_q$ such that the following holds for any $\varep \in (0, 1]$, $J \geq 1$, and any random distribution\footnote{Note that $u_0$ enters into the equation for $w_\varep$ through $\determ_\varep$.} $u_0 \in \Cs_x^{\eta}$ which is independent of $\zeta$. Letting $w_\varep$ be defined as in \eqref{eq:w-equation}, we have that
\begin{equs}
\p\big( \|w_\varep\|_{C_t^0 \Cs_x^{2 - \bar{\beta}}([0, 1/J])} > 1 \big) \leq A_q J^{-q}.
\end{equs}
\end{proposition}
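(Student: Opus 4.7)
The plan is to apply the regularity-structures fixed point machinery of \cite{CHS2018}, with our modified model $(\model^{\determ_\varep, \varep}, \Gamma^{\determ_\varep, \varep})$ in place of the neutral BPHZ model used there. The first observation, already emphasized in the introduction, is that by the very choice of $\Res_\varep$ in Definition \ref{setup:def-resonant-operator}, the extra resonance produced by the modified noises cancels out, so that the lift of \eqref{eq:w-equation} to the space of modelled distributions $\mc{D}^{\gamma, \eta}$ coincides formally with \cite[(2.14)]{CHS2018}. I would therefore invoke the analog of \cite[Theorem 2.7]{CHS2018} to obtain, for each realization of the modified model, a unique modelled distribution $W_\varep$ solving the abstract equation on some random interval $[0, \tau^*]$, with reconstruction equal to $w_\varep$, and with $\tau^*$ bounded below by a positive power of $1/\|\model^{\determ_\varep, \varep}\|$, where $\|\model^{\determ_\varep, \varep}\|$ is shorthand for the collection of model seminorms on the unit parabolic cylinder.

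Next I would produce high-moment bounds on $\|\model^{\determ_\varep, \varep}\|$ that are uniform in $u_0$. The crucial point is that $\determ_\varep$ is a deterministic functional of $u_0$ through \eqref{setup:eq-resonant-equation}, and is therefore independent of $\zeta$; conditioning on $u_0$, the function $\determ_\varep$ is frozen, so Proposition \ref{prop:stochastic-estimates} applies directly to this fixed $\determ$, and a Kolmogorov-type argument as in \cite[Theorem 2.10]{CHS2018} upgrades the pointwise moment bounds into $\E[\|\model^{\determ_\varep, \varep}\|^p \mid u_0] \leq C_p$ for every $p \geq 1$, with $C_p$ independent of $u_0$ and $\varep$. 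Taking expectation in $u_0$ then yields the same bound unconditionally.

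The final step is to exploit the short time interval $[0, 1/J]$. Standard short-time estimates for the abstract fixed point in $\mc{D}^{\gamma, \eta}$, together with the vanishing initial data $w_\varep(0) = 0$ and the reconstruction bound $\|w_\varep\|_{C_t^0 \Cs_x^{2 - \bar{\beta}}} \lesssim \|W_\varep\|_{\mc{D}^{\gamma, \eta}}$, should give an estimate of the form $\|w_\varep\|_{C_t^0 \Cs_x^{2 - \bar{\beta}}([0, 1/J])} \leq C J^{-\delta} \|\model^{\determ_\varep, \varep}\|^{N}$ for some $\delta > 0$ and $N \geq 1$, valid on the event $\{J^{-\delta} \|\model^{\determ_\varep, \varep}\|^{N} \leq 1\}$. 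Combined with the moment bounds from the previous step and Chebyshev's inequality with $p$ chosen sufficiently large in terms of $q$, this yields the desired $\p$-bound by $A_q J^{-q}$.

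The main obstacle I anticipate is handling the multiplier $e^{\pm \icomplex \beta G v_\varep(0)} = e^{\mp \icomplex \beta G \Phi_\varep(t_0)}$ appearing in front of the noise in \eqref{eq:w-equation}. Even though its modulus is one pointwise, $G \Phi_\varep(t_0)$ only lies in a H\"older--Besov space of negative regularity, so to fit it into the abstract fixed point one must lift it to a modelled distribution and verify that multiplication by it preserves the short-time contraction. This is essentially treated in \cite[Section~3]{CHS2018}, and I would adapt that argument, since from the perspective of the modelled distribution structure the only change compared to their setup is in the model itself.
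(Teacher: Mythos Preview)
Your proposal is correct and follows essentially the same route as the paper: the paper packages your first step as Proposition~\ref{prop:reconstruction} (explicitly verifying that the reconstruction of the abstract fixed point equals $w_\varep$, which is where the choice of $\Res_\varep$ enters via the dipole identity~\eqref{eq:dipole-resonance}), then combines the uniform model moment bounds from Proposition~\ref{prop:stochastic-estimates} with the inverse-polynomial dependence of the local existence time on the model size and on $\|\Phi_\varep(0)\|$ (which has sub-Gaussian tails) to conclude via Chebyshev. One small correction to your last paragraph: $G\Phi_\varep(t_0)(t,\cdot)$ is smooth for $t>t_0$; the genuine issue is its blow-up as $t\downarrow t_0$, which is precisely what the weighted spaces $\mc{D}^{\gamma,\eta}$ in \cite{CHS2018} are built to absorb.
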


By combining Propositions \ref{setup:prop-resonant-equation} and \ref{prop:remainder}, we will prove the following estimate, which will allow us to easily iterate in time. Indeed, Theorem \ref{thm:sg-gwp} will quickly follow once we have established the iteration estimate.

\begin{proposition}[Iteration estimate]\label{prop:iteration-estimate}
Let $\ovl{m}^2 < m^2$. There is a constant $C_1 \geq 1$ which only depends on $\beta^2$, $m^2$, $\ovl{m}^2$, $\eta$, such that the following hold. Let $t_0 \geq 0$ and suppose that $u_0 \in \Cs_x^{\eta}(\T^2)$ is independent of $\zeta |_{\revision{[}t_0, \infty)}$. Let $u$ be the solution to \eqref{eq:SG-general-starting-time}, defined as the limit of the regularized equations \eqref{eq:smoothed-SG-general-starting-time}. Then for any $q \geq 1$, there is a constant $B_q < \infty$ which is uniform in $u_0$ such that for any integer $J \geq 1$, 
\begin{equs}
\p\Big(\|e^{m^2(t - t_0)} u\|_{C_t^0 \Cs_x^{\eta}([t_0, t_0 + 1/J])} > e^{\ovl{m}^2 / J} \|u_0\|_{\Cs_x^{\eta}} + C_1 J^{C_1} \Big) \leq B_q J^{-q}.
\end{equs}
\end{proposition}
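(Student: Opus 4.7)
The plan is to use the Ansatz \eqref{eq:ansatz}, $u_\varep = \Phi_\varep + \determ_\varep - G \Phi_\varep(t_0) + w_\varep$, to split the regularized solution into a deterministic resonant piece, a linear Gaussian piece, and a nonlinear remainder, and to estimate each uniformly in $\varep$ before passing to the limit $\varep \to 0$. Two preliminary reductions: by Remark~\ref{remark:replacement-noise}, we may assume without loss of generality that $u_0$ is independent of $\zeta$ on all of $\R \times \T^2$; and for finitely many small values of $J$ (below some $J_0 = J_0(\beta, m, \ovl{m}, \eta)$, in particular $J_0 \geq 1/T_0$), the estimate is trivial by choosing $B_q$ large, so we may assume $J \geq J_0$.

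For the resonant piece, apply Proposition~\ref{setup:prop-resonant-equation} on $[t_0, t_0 + 1/J]$ (the autonomy of the resonant equation permits translating the starting time). Writing $\alpha := 2 - \frac{\beta^2}{4\pi} + \frac{\eta}{2} > 0$ and $\gamma := \frac{\beta^2}{2\pi} - 2 + \delta$, multiplying the bound by $e^{m^2(t-t_0)}$ yields
\begin{equs}
\sup_{t \in [t_0, t_0 + 1/J]} e^{m^2(t-t_0)} \|\determ_\varep(t)\|_{\Cs_x^\eta} \leq \|u_0\|_{\Cs_x^\eta} + C e^{m^2/J} J^{-\alpha} (1 + \|u_0\|_{\Cs_x^\eta})^\gamma.
\end{equs}
Since $\beta^2 < 6\pi$, we can pick $\delta > 0$ so that $\gamma < 1$; Young's inequality then gives $(1+x)^\gamma \leq \vartheta x + C \vartheta^{-\gamma/(1-\gamma)}$ for any $\vartheta > 0$. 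Choosing $\vartheta$ polynomially in $J$ so that $C e^{m^2/J} J^{-\alpha} \vartheta \leq e^{\ovl{m}^2/J} - 1$ (possible because $e^{\ovl{m}^2/J} - 1 \geq \ovl{m}^2/J$), and noting that the resulting $C \vartheta^{-\gamma/(1-\gamma)}$ is at most polynomial in $J$, we obtain the deterministic bound
\begin{equs}
\sup_{t \in [t_0, t_0 + 1/J]} e^{m^2(t-t_0)} \|\determ_\varep(t)\|_{\Cs_x^\eta} \leq e^{\ovl{m}^2/J} \|u_0\|_{\Cs_x^\eta} + C' J^{C'},
\end{equs}
with $C'$ depending only on $\beta, m, \ovl{m}, \eta$. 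This step crucially exploits $\gamma < 1$, which is precisely what the resonant operator $\Res_\varep$ was designed to achieve.

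For the remaining two pieces, we use probabilistic estimates. The linear piece $\Phi_\varep - G \Phi_\varep(t_0)$ vanishes at $t = t_0$, is independent of $u_0$, and has uniformly (in $\varep$) bounded Gaussian moments in $C_t^0 \Cs_x^{\eta}([t_0, t_0 + 1/J])$; by Markov's inequality, $\p(\|\Phi_\varep - G \Phi_\varep(t_0)\|_{C_t^0 \Cs_x^\eta} > J) \leq C_q J^{-q}$. The nonlinear remainder is controlled by Proposition~\ref{prop:remainder} (shifted in time using Remark~\ref{remark:replacement-noise} and stationarity of $\zeta$), combined with the embedding $\Cs_x^{2 - \bar{\beta}} \hookrightarrow \Cs_x^\eta$ (valid since $\eta < 0 < 2 - \bar{\beta}$). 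A union bound and the triangle inequality applied to the Ansatz yield the iteration estimate for $u_\varep$ with probability at least $1 - B_q J^{-q}$, uniformly in $\varep \in (0,1]$; passing to the limit $\varep \to 0$ (the uniform bound precludes blow-up on this short interval with high probability) concludes the proof.

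The main obstacle is the deterministic estimate for the resonant piece: the whole argument hinges on $\gamma < 1$, i.e., on $\beta^2 < 6\pi$. Without the resonant equation construction, the coefficient of $\|u_0\|$ after one time step would at best be of the form $1 + C J^{-\alpha}$ with a constant $C$ unrelated to the mass $m^2$, and this could not be iterated to the uniform-in-time bound \eqref{eq:sg-gwp} of Theorem~\ref{thm:sg-gwp}.
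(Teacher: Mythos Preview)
Your proposal is essentially correct and follows the same approach as the paper: the noise replacement (Remark~\ref{remark:replacement-noise}), the reduction to large $J$, the Ansatz~\eqref{eq:ansatz}, the deterministic sublinear bound on $\determ_\varep$ via Proposition~\ref{setup:prop-resonant-equation} combined with Young's inequality (the paper phrases it as $(c_1 J^{-1}(\|u_0\|+1))^{1-\kappa}\cdot C' J^{1-\kappa}\leq c_1 J^{-1}(\|u_0\|+1)+(C'J^{1-\kappa})^{1/\kappa}$, which is exactly your $\vartheta$-trick), the Gaussian tail bound for $\Phi_\varep-G\Phi_\varep(t_0)$, and Proposition~\ref{prop:remainder} for $w_\varep$.

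The one place where you are noticeably less precise than the paper is the passage to the limit $\varep\to 0$. Your parenthetical ``the uniform bound precludes blow-up on this short interval with high probability'' is somewhat circular: the bounds you have established are on $u_\varep$, not on $u$, and on the event $\{\tau\leq 1/J\}$ the solution $u$ is not even defined past $\tau$. The paper handles this by extracting an a.s.\ convergent subsequence on $\{\tau>T\}$, then observing that on $\{\tau\leq T\}$ one has $\liminf_k\|u_{\varep_k}\|_{C_t^0\Cs_x^\eta([0,T])}=\infty$ (since along the subsequence $u_{\varep_k}\to u$ on any $[0,s_0]$ with $s_0<\tau$, and $\|u(s)\|\to\infty$ as $s\uparrow\tau$), so that $\|e^{c_0 t}u\|_{C_t^0\Cs_x^\eta([0,T])}\leq\liminf_k\|e^{c_0 t}u_{\varep_k}\|_{C_t^0\Cs_x^\eta([0,T])}$ holds a.s.\ in both cases; Fatou's lemma then transfers the probability bound. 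This is a standard maneuver, but worth spelling out.
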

\begin{proof}
For notational simplicity, take $t_0 = 0$. Recalling Remarks \ref{remark:noise-only-positive-times-enter} and \ref{remark:replacement-noise}, if we replace $\zeta$ by $\tilde{\zeta} |_{(-\infty, 0) \times \T^2} + \zeta|_{\revision{[}0, \infty) \times \T^2}$, where $\tilde{\zeta}$ is a space-time white noise which is independent of everything else, then this does not change $u_\varep$, and thus neither does it change the limit $u$. Note moreover that $u_0$ is independent of this modified noise. We may therefore assume that $u_0$ is independent of $\zeta$, which will eventually allow us to apply Proposition~\ref{prop:remainder}.
In the following, take $c_0 = m^2$ and $c_1 = \ovl{m}^2$. By possibly adjusting the constant $B_q$, it suffices to assume that $J$ is large enough (depending on $\beta^2, m^2, \eta$) so that $1/J \leq T_0$, where $T_0$ is as in Proposition~\ref{setup:prop-resonant-equation}.

First, by \cite[Theorem 1.1]{CHS2018}, we have that the solutions $u_\varep$ converge to $u$ in the following sense. There is a stopping time $\tau$ such that for all $T > 0$, we have that $\|u_\varep - u\|_{C_t^0 \Cs_x^{\eta}([0, T])} \ra 0$ in probability on the event $\{\tau > T\}$. Furthermore, on the event $\{\tau < \infty\}$, we have that $\lim_{t \uparrow \tau} \|u(t)\|_{\Cs_x^{\eta}(\T^2)} = \infty$. Using this, we have that for all $T > 0$, there is a subsequence $\{u_{\varep_k}\}$ such that on the event $\{\tau > T\}$, we have that $\|u_{\varep_k} - u\|_{C_t^0 \Cs_x^{\eta}([0, T])} \ra 0$ a.s. Then by a diagonal argument, we may obtain a subsequence $\{u_{\varep_k}\}$ such that for all $T \in \mathbb{Q}$, $T > 0$, we have that $\|u_{\varep_k} - u\|_{C_t^0 \Cs_x^{\eta}([0, T])} \ra 0$ a.s. on the event $\{\tau > T\}$. Now with this subsequence, we claim that for all $T \in \mathbb{Q}$, $T > 0$, we have that
\begin{equs}\label{eq:liminf-bound}
\|e^{c_0 t} u\|_{C_t^0 \Cs_x^{\eta}([0, T])} \stackrel{a.s.}{\leq} \liminf_{k} \|e^{c_0 t} u_{\varep_k}\|_{C_t^0 \Cs_x^{\eta}([0, T])}.
\end{equs}
To see this, note that if $\tau > T$, then this follows directly by the a.s. convergence. If $\tau \leq T$, then given $N \geq 1$, take $s < \tau$ such that $\|u(s)\|_{\Cs_x^{\eta}(\T^2)} \geq N$. Taking $s_0 \in [s, \tau)$ rational, it then follows that
\begin{equs}
\liminf_{k} \|e^{c_0 t} u_{\varep_k}\|_{C_t^0 \Cs_x^{\eta}([0, T])} \geq \liminf_{k} \|e^{c_0 t} u_{\varep_k}\|_{C_t^0 \Cs_x^{\eta}([0, s_0])} = \|e^{c_0 t} u\|_{C_t^0 \Cs_x^{\eta}([0, s_0])} \geq N.
\end{equs}
Since $N$ was arbitrary, we obtain the desired estimate \eqref{eq:liminf-bound} when $\tau \leq T$ also.

Fix an integer $J \geq 1$. For any $\lambda \geq 0$, it follows by \eqref{eq:liminf-bound} and Fatou's lemma that
\begin{equs}\label{eq:probability-liminf-bound}
\p\Big(\|e^{c_0 t} u\|_{C_t^0 \Cs_x^{\eta}([0, 1/J])} > \lambda \Big) \leq \liminf_k \p\Big( \|e^{c_0 t} u_{\varep_k}\|_{C_t^0 \Cs_x^{\eta}([0, 1/J])} > \lambda  \Big).
\end{equs}
Recalling our ansatz \eqref{eq:ansatz} for $u_\varep$, we have that
\begin{equs}
&\,\|e^{c_0 t} u_\varep\|_{C_t^0 \Cs_x^{\eta}([0, 1/J])} \\
\leq&\, \|e^{c_0 t} (\Phi_\varep - G \Phi_\varep(0))\|_{C_t^0 \Cs_x^{\eta}([0, 1/J])} + \|e^{c_0 t}  \determ_\varep\|_{C_t^0 \Cs_x^{\eta}([0, 1/J])} + \|e^{c_0 t} w_\varep\|_{C_t^0 \Cs_x^{\eta}([0, 1/J])}.
\end{equs}
By standard stochastic estimates (see e.g. \cite[Lemma 5.6]{BC23}),  we have that
\begin{equs}
\p\Big( \|e^{c_0 t} (\Phi_\varep - G \Phi_\varep(0))\|_{C_t^0 \Cs_x^{\eta}([0, 1/J])} > J \Big) \leq C e^{-c J^2},
\end{equs}
where $C, c$ are uniform in $\varep$.
By Proposition \ref{setup:prop-resonant-equation} with $\delta > 0$ chosen so that $\frac{\beta^2}{2\pi} -2 + \delta = 1-\kappa$ (recall from \eqref{eq:kappa} that $\kappa = 10^{-3}(6\pi - \beta^2)$), we have that (throwing away the positive power of $T = 1/J$ in the first line)
\begin{equs}
\|e^{c_0 t}  \determ_\varep\|_{C_t^0 \Cs_x^{\eta}([0, 1/J])}  &\leq \|u_0\|_{\Cs_x^\eta} +  C e^{c_0} (\|u_0\|_{\Cs_x^\eta} + 1)^{1-\kappa} \\
&= \|u_0\|_{\Cs_x^\eta} + (c_1 J^{-1} (\|u_0\|_{\Cs_x^\eta} + 1))^{1-\kappa} C e^{c_0} (c_1 J^{-1})^{-(1-\kappa)} \\
&\leq \|u_0\|_{\Cs_x^\eta} + c_1 J^{-1} \|u_0\|_{\Cs_x^\eta} + c_1 J^{-1} + \big(C e^{c_0} (c_1 J^{-1})^{-(1-\kappa)}\big)^{\frac{1}{\kappa}} \\
&\leq e^{c_1 / J} \|u_0\|_{\Cs_x^\eta} + \frac{C_1}{2} J^{C_1},
\end{equs}
where we took $C_1$ sufficiently large depending on $C, c_0, c_1, \kappa, \eta$, and where we used Young's inequality in the third line. Finally, using that $u_0$ is independent of $\zeta$, and applying the remainder estimate Proposition~\ref{prop:remainder} (and also using that $C_1$ may be taken large enough so that $e^{c_0} \leq \frac{C_1}{4}$), we have that for all $q \geq 1$, there is a constant $A_q$ which is uniform in $u_0$, $\varep$, and $J$ such that 
\begin{equs}
\p\Big(\|e^{c_0 t} w_\varep\|_{C_t^0 \Cs_x^{\eta}([0, 1/J])} > \frac{C_1}{4} \Big) = \E \Big[\p \Big( \|e^{c_0 t} w_{\varep}\|_{C_t^0 \Cs_x^{\eta}([0, 1/J])} > \frac{C_1}{4} ~\Big|~ u_0 \Big)\Big] \leq A_q J^{-q}.
\end{equs}
By combining the previous few estimates, we obtain that for all $\varep > 0$,
\begin{equs}
\p\Big( \|e^{c_0 t} u_{\varep}\|_{C_t^0 \Cs_x^{\eta}([0, 1/J])} > C_1 J^{C_1} + e^{c_1/J} \|u_0\|_{\Cs_x^{\eta}}  \Big) \leq A_q J^{-q}  + C e^{-c J^2} \leq B_q J^{-q}.
\end{equs}
The desired result now follows by \eqref{eq:probability-liminf-bound}.
\end{proof}

\begin{proof}[Proof of Theorem \ref{thm:sg-gwp}]
In the following, take $c_0 = m^2, c_1 = m^2/2$. For $J \in \dyadic$ and for $0 \leq j \leq J - 1$, define $t_j := j / J$ and
\begin{equs}
E_j^J := \{\tau > t_j\} \cap \Big\{\|e^{c_0 (t - t_j)} u\|_{C_t^0 \Cs_x^{\eta}([t_j, t_{j+1}])} \leq e^{c_1/J} \|u(t_j)\|_{\Cs_x^{\eta}} + C_1 J^{C_1} \Big\}. 
\end{equs}
Define also $E_{\leq j}^J := \bigcap_{i=0}^j E_{i}^J$. On the event $E^J_{\leq j}$, we have that 
\begin{equation}\label{eq:E-J-j-bound}
\begin{aligned}
\|e^{c_0 t} u\|_{C_t^0 \Cs_x^{\eta}([0, t_{j+1}])} &\leq \max\big(\|e^{c_0 t} u\|_{C_t^0 \Cs_x^{\eta}([0, t_j])},  \|e^{c_0 t} u\|_{C_t^0 \Cs_x^{\eta}([t_j, t_{j+1}])}\big) \\
&\leq \max\big(\|e^{c_0 t} u\|_{C_t^0 \Cs_x^{\eta}([0, t_j])}, e^{c_0 t_j} e^{c_1/J} \|u(t_j)\|_{\Cs_x^{\eta}} + e^{c_0t_j} C_1 J^{C_1}\big) \\
&\leq  e^{c_1 / J} \|e^{c_0 t} u\|_{C_t^0 \Cs_x^{\eta}([0, t_j])} + C_1 e^{c_0} J^{C_1}.
\end{aligned}
\end{equation}
Thus by an inductive argument, we have that on the event $E^J_{\leq J-1}$,
\begin{equs}\label{eq:iteration-final-bound}
\|e^{c_0 t} u\|_{C_t^0 \Cs_x^{\eta}([0, 1])} &\leq e^{c_1} \|u_0\|_{\Cs_x^{\eta}} + C_1 e^{c_0}  J^{C_1} (1 + e^{c_1/J} + \cdots + e^{c_1}) \\
&\leq e^{c_1} \|u_0\|_{\Cs_x^{\eta}} + C_1 e^{c_0 + c_1} J^{C_1 + 1}.
\end{equs}
\revision{Moreover, by a similar inductive argument, we have that $\tau > t_j$ on the event $E_{\leq j-1}^J$. Indeed, on the latter event, we have upon iterating \eqref{eq:E-J-j-bound} that $u$ does not blow up on $[0, t_j]$, which implies that $\tau > t_j$ (i.e. that blowup can only happen after time $t_j$).} Next, for any $q \geq 1$, we have that 
\begin{equation}\label{eq:probability-iteration-estimate}
\begin{aligned}
\p((E^J_{\leq J - 1})^c) ~\revision{=}~ \p((E_0^J)^c) + \sum_{j=1}^{J-1} \p(E_{\leq j-1}^J \backslash E_j^J) \leq B_q J^{1-q},
\end{aligned}
\end{equation}
where we applied Proposition \ref{prop:iteration-estimate} in the last step. Here, we also used that $u(t_j)$ is independent of $\zeta |_{\revision{[}t_j, \infty)}$, which follows directly from the construction of $u$ as the limit of $u_\varep$, and the fact that the $\sigma$-algebra $\bigcap_{\varep > 0} \sigma(\zeta|_{(-\infty, t_j + \varep)})$ (where $\sigma(\zeta |_{(-\infty, t_j + \varep)})$ is the $\sigma$-algebra generated by $\zeta |_{(-\infty, t_j+\varep)}$) is independent of $\zeta |_{\revision{[}t_j, \infty)}$. For notational brevity in what follows, for $J \in \dyadic$, define $F_J := \bigcup_{\substack{J' \in \dyadic \\ J' \leq J}} E^{J'}_{\leq J'-1}$. Define also the endpoint case $F_{1/2} := \varnothing$. The estimate \eqref{eq:probability-iteration-estimate} shows that $\p(F_J) \ra 1$ as $J \toinf$. Combining this with the estimate \eqref{eq:iteration-final-bound}, we have that for $p \geq 1$, and some large enough constant $C_2$,
\begin{equs}
\|e^{c_0 t} u\|_{C_t^0 \Cs_x^{\eta}([0, 1])}^p &= \sum_{\substack{J \in \dyadic}} \ind_{F_J \backslash F_{J/2}} \|e^{c_0 t} u\|_{C_t^0 \Cs_x^{\eta}([0, 1])}^p \\
&\leq e^{3c_1 p/2} \|u_0\|_{\Cs_x^{\eta}}^p + \sum_{\substack{J \in \dyadic}} \ind_{F_J \backslash F_{J/2}} \big(C_2 J^{C_1 + 1}\big)^p,
\end{equs}
where in the second inequality, we used that given $c_0$, there is a constant $C$ such that $(x+y)^p \leq e^{c_1 p /2} x^p + C^p y^p$ for all $x, y \geq 0$ and $p \geq 1$ (simply split into cases $x > C' y$ or $x \leq C'y$ for $C'$ sufficiently large depending on $c_1$). Taking expectations on both sides and $q \geq 1$ large enough in the estimate \eqref{eq:probability-iteration-estimate}, we obtain that for some large enough constant $M_p$,
\begin{equs}\label{eq:unit-time-estimate}
\E\Big[\|e^{c_0 t} u\|_{C_t^0 \Cs_x^{\eta}([0, 1])}^p\Big] \leq e^{3c_1 p/2} \E[\|u_0\|_{\Cs_x^{\eta}}^p] + M_p.
\end{equs}
Recalling that $c_0 = m^2, c_1 = m^2/2$, so that $3c_1 /2 < c_0$, the above estimate can easily be iterated to obtain the uniform-in-time estimate in the theorem statement. We omit the details.
\end{proof}




\section{The resonant equation}\label{section:resonant}

We now study the resonant equation \eqref{setup:eq-resonant-equation} and control its solution $\determ_\varep$, i.e., we prove Proposition~\ref{setup:prop-resonant-equation}. The proof of Proposition~\ref{setup:prop-resonant-equation} turns out to be rather elementary, and the main ingredients are the basic inequality $|\sin(\beta \determ)|\leq \min(1,\beta |\determ|)$ and basic estimates of the heat propagator $e^{t\Delta\revision{/2}}$. For notational simplicity, we take $t_0 = 0$ throughout this section.\\

Before turning to the proof of Proposition \ref{setup:prop-resonant-equation}, we
need to introduce additional notation and recall basic estimates of $e^{t\Delta\revision{/2}}$. Throughout this section, we use the variables $t,s\in [0,\infty)$ and $x,y\in \T^2$.
The corresponding space-time coordinates are denoted by $z=(t,x)$ and $z^\prime=(s,y)$, respectively.  In the following definition, we introduce the function spaces and norms which will be used in our analysis of \eqref{setup:eq-resonant-equation}.

\begin{definition}\label{resonant:def-norms}
Let $\eta$ be as in \eqref{eq:eta}, let $T>0$, and let $I=[0,T]$. For any $\determ,F\colon I\times \T^2 \rightarrow \R$, we then define the norms 
\begin{align}
\big\| \determ \big\|_\Snorm = \big\| \determ \big\|_{\Snorm(I)} &:= 
 \| \determ \|_{C_t^0 \Cs_x^{\eta}(I)} + \sup_{\substack{0<s<t\leq T, \\ x,y\in \T^2}} s^{\frac{1-\eta}{2}} \frac{|\determ(z)-\determ(\zprime)|}{\| z - \zprime \|_{\frks}}, \label{resonant:eq-Snorm} \\  
 \big\| F \big\|_{\Nnorm} = \big\| F \big\|_{\Nnorm(I)}&:= \sup_{\substack{0<t\leq T, \\ x\in \T^2}} t^{\frac{1-\eta}{2}} |F(t,x)|. \label{resonant:eq-Nnorm}
\end{align}
The $\Snorm(I)$ and $\Nnorm(I)$-norms will be used to control the solution and nonlinearity of the resonant equation, which motivated our notation. The corresponding function spaces $\Snorm(I)$ and $\Nnorm(I)$ are simply defined as the closure of $C_t^0 \Cs_x^\infty(I)$  with respect to these norms.
\end{definition}

In the next lemma, we recall basic estimates for $e^{t\Delta\revision{/2}}$, which are stated using the norms from Definition~\ref{resonant:def-norms}. This lemma also contains estimates of the Duhamel integral 
\begin{equs}
\Duh \big[ F \big] := \int_0^t \mathrm{d}s\, e^{(t-s)(-m^2+ \Delta\revision{/2})} F(s). 
\end{equs}

\begin{lemma}[Basic estimates] \label{resonant:lem-basic}
For all $\alpha \in \R$, $t>0$, and $u_0 \in \Cs_x^\alpha$, it holds that 
\begin{align}
\| e^{t (-m^2+ \Delta\revision{/2})} u_0 \|_{\Cs_x^\alpha} &\leq e^{-m^2t} \| u_0 \|_{\Cs_x^\alpha}. \label{resonant:eq-heat-contraction}
\end{align}
Furthermore, for all $T\in (0,1]$, $u_0 \in \Cs_x^{\eta}$,  and $F\colon [0,T]\times \T^2 \rightarrow \R$, it holds that  
\begin{equation}\label{resonant:eq-duhamel-integral}
\big\| e^{t(-m^2+ \Delta\revision{/2})} u_0 \big\|_{\Snorm([0,T])} \lesssim  \| u_0 \|_{\Cs_x^{\eta}} \quad \text{and} \quad  \big\| \Duh \big[ F \big] \big\|_{\Snorm([0,T])} \lesssim T^{\frac{1+\eta}{2}} \big\| F \big\|_{\Nnorm([0,T])}. 
\end{equation}
\end{lemma}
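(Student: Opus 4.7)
The three bounds in Lemma \ref{resonant:lem-basic} will follow from standard smoothing properties of the heat semigroup $U_t := e^{t(-m^2+\Delta)} = e^{-m^2 t} e^{t\Delta}$ on Hölder--Besov spaces, combined with careful bookkeeping of the scaling exponents in the $\Snorm$ and $\Nnorm$ norms. Four ingredients will suffice: (a) the $\Cs_x^\alpha$-contraction $\|U_t g\|_{\Cs_x^\alpha} \leq e^{-m^2 t} \|g\|_{\Cs_x^\alpha}$, which holds because $U_t$ is convolution against a nonnegative kernel of mass $e^{-m^2 t}$ and commutes with the Littlewood--Paley projectors; (b) Besov smoothing $\|U_t g\|_{\Cs_x^\beta} \lesssim t^{-(\beta-\alpha)/2} \|g\|_{\Cs_x^\alpha}$ for $\beta \geq \alpha$; (c) the gradient estimate $\|\nabla U_t g\|_{L_x^\infty} \lesssim t^{(\alpha-1)/2} \|g\|_{\Cs_x^\alpha}$ for $\alpha \in \R$; and (d) the time-increment bound $\|(U_\tau - I) g\|_{L_x^\infty} \lesssim \tau^{1/2} \|g\|_{\Cs_x^1}$ for $\tau \in (0, 1]$. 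All four are routine consequences of the Littlewood--Paley decomposition and the explicit Gaussian form of the heat kernel, with the mass contributing only harmless $O(\tau)$ corrections. Estimate \eqref{resonant:eq-heat-contraction} is immediate from (a).

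For the propagator bound in \eqref{resonant:eq-duhamel-integral}, the $C_t^0 \Cs_x^\eta$ part is controlled directly by (a). To handle the weighted Lipschitz seminorm, I split
\begin{equs}
U_t u_0(x) - U_s u_0(y) = \big[U_s u_0(x) - U_s u_0(y)\big] + \big[U_t u_0(x) - U_s u_0(x)\big].
\end{equs}
The spatial piece is bounded by $\|\nabla U_s u_0\|_{L_x^\infty} |x-y|$; using (c) with $\alpha = \eta$, this is $\lesssim s^{(\eta-1)/2} |x-y| \|u_0\|_{\Cs_x^\eta}$, so that the weight $s^{(1-\eta)/2}$ cancels the negative power of $s$ while $|x-y| \leq \|z-z'\|_\frks$ absorbs the denominator. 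For the temporal piece, the semigroup property gives $(U_{t-s} - I) U_s u_0$; (d) combined with (b) (applied with $\alpha = \eta$, $\beta = 1$) yields $\lesssim (t-s)^{1/2} s^{(\eta-1)/2} \|u_0\|_{\Cs_x^\eta}$, and again the weight cancels $s^{(\eta-1)/2}$ while $(t-s)^{1/2} \leq \|z-z'\|_\frks$.

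The Duhamel bound proceeds along the same lines, with the additional input $\|F(\tau)\|_{L_x^\infty} \leq \tau^{(\eta-1)/2} \|F\|_\Nnorm$ coming from the definition \eqref{resonant:eq-Nnorm}. Integrating this pointwise bound against (a) handles the $C_t^0 \Cs_x^\eta$ part, giving $\lesssim T^{(1+\eta)/2} \|F\|_\Nnorm$. For the weighted Lipschitz seminorm, the spatial difference reduces via (c) (with $\alpha = 0$) to the Beta integral $\int_0^s (s-\tau)^{-1/2} \tau^{(\eta-1)/2} d\tau \sim s^{\eta/2}$, and the weight $s^{(1-\eta)/2}$ then yields $s^{1/2} \leq T^{1/2} \leq T^{(1+\eta)/2}$, where the last inequality uses $\eta < 0$ and $T \leq 1$. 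For the temporal difference, I decompose
\begin{equs}
\Duh[F](t) - \Duh[F](s) = \int_s^t U_{t-\tau} F(\tau) \, d\tau + (U_{t-s} - I) \Duh[F](s) =: I_1 + I_2.
\end{equs}
The piece $I_1$ is bounded in $L_x^\infty$ by $\int_s^t \tau^{(\eta-1)/2} d\tau \cdot \|F\|_\Nnorm$, and an elementary case split gives $\int_s^t \tau^{(\eta-1)/2} d\tau \lesssim (t-s) s^{(\eta-1)/2}$ when $t \leq 2s$ and $\lesssim (t-s)^{(1+\eta)/2}$ when $t > 2s$. The piece $I_2$ is estimated using (d) together with the Beta-integral bound $\|\Duh[F](s)\|_{\Cs_x^1} \lesssim s^{\eta/2} \|F\|_\Nnorm$, yielding $\lesssim (t-s)^{1/2} s^{\eta/2} \|F\|_\Nnorm$. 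In every subcase, combining the $L_x^\infty$-bound with the weight $s^{(1-\eta)/2}$ and dividing by $\|z-z'\|_\frks \geq (t-s)^{1/2}$ produces at most $T^{(1+\eta)/2}$.

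The only mildly technical point is the casework for $I_1$, where the singular weight of $\Nnorm$ at $\tau = 0$ forces a split based on whether the time increment $t-s$ is small or large compared to $s$. Apart from this, the argument is pure exponent arithmetic on top of the standard heat-semigroup estimates (a)--(d); no probabilistic or regularity-structures input is required.
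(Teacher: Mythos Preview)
Your proposal is correct and follows essentially the same approach as the paper: the paper's own proof is a two-sentence sketch invoking the $L^1$-normalization of the heat kernel for \eqref{resonant:eq-heat-contraction} and deferring \eqref{resonant:eq-duhamel-integral} to standard arguments in the style of \cite[Proof of Lemma A.9]{GIP15}, and you have simply written out those standard arguments (smoothing estimates (a)--(d), the spatial/temporal splitting, and the Beta-integral bookkeeping) explicitly. The casework on $I_1$ and the exponent juggling are all correct.
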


\begin{proof}
The first estimate \eqref{resonant:eq-heat-contraction} follows directly from the fact that the kernel of $e^{t\Delta\revision{/2}}$ is $L^1$-normalized and commutes with Littlewood-Paley projections. The second estimate \eqref{resonant:eq-duhamel-integral} is standard and can be obtained using similar arguments as in \cite[Proof of Lemma A.9]{GIP15}.
\end{proof}

\begin{remark} 
Since $\eta$ is negative, the $\eta$-term in the exponent of  $T^{\frac{1+\eta}{2}} $ in \eqref{resonant:eq-duhamel-integral} leads to a loss. This loss is a result of the $C_t^0 \Cs_x^{\eta}$-term in \eqref{resonant:eq-Snorm}, which cannot make optimal use of the time-weighted $L^\infty_x$-norm (rather than time-weighted $\Cs_x^\eta$-norm) in \eqref{resonant:eq-Nnorm}. However, since the absolute value of $\eta$ is rather small, this does not cause any problems.  
\end{remark}

In the next lemma, we obtain estimates of the resonant operator $\Res_\varep$ from Definition \ref{setup:def-resonant-operator}. Once these estimates of $\Res_\varep$ have been obtained, we will be able to prove Proposition \ref{setup:prop-resonant-equation} using a contraction-mapping argument (see the end of this section).

\begin{lemma}[Estimate of resonant operator]\label{resonant:lem-resonant-estimate}
Let $4\pi \leq \beta^2 < 6\pi$ and let $\nu \in \big(\frac{\beta^2}{2\pi}-2, 1\big]$. Furthermore, let 
\begin{equation*}
\gamma := 3- \tfrac{\beta^2}{2\pi}  - (1-\nu) \eta, 
\end{equation*}
let $0<T\leq 1$, and let $\determ\colon [0,T] \times \T^2 \rightarrow \R$. Then, it holds that 
\begin{equation}\label{resonant:eq-resonant-estimate}
\sup_{\varep\in (0,1]} \big\| \Res_\varep ( \determ ) \big\|_{\Nnorm([0,T])} \lesssim T^{\frac{\gamma}{2}} \| \determ \|_{\Snorm([0,T])}^\nu.
\end{equation}
In addition, if $\determ_1,\determ_2 \colon [0,T]\times \T^2 \rightarrow \R$, then it also holds that 
\begin{equation}\label{resonant:eq-resonant-estimate-difference}
\sup_{\varep\in (0,1]} \big\| \Res_\varep(\determ_1) - \Res_\varep(\determ_2) \big\|_{\Nnorm([0,T])} \lesssim T^{\frac{3}{2}-\frac{\beta^2}{4\pi}}  \| \determ_1 - \determ_2 \|_{\Snorm([0,T])}.
\end{equation}
\end{lemma}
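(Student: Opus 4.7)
The plan is to absorb the joint singularity of $K\cdot \Jc_\varep^-$ into a single scaling bound, use the Hölder--Lipschitz regularity of $\determ$ encoded in $\Snorm$ to tame the $\zprime \to z$ singularity, and then reduce everything to an elementary beta-function integral. The three pointwise inputs I will combine are: (i) $|K(\zprime,z)\Jc_\varep^-(z-\zprime)|\lesssim \|z-\zprime\|_\frks^{-2-\beta^2/(2\pi)}$, uniformly in $\varep$, obtained from \eqref{setup:eq-K-estimate} and \eqref{setup:eq-J-estimate} after discarding the $\varep$-regularization in $\Jc_\varep^-$; (ii) the interpolation inequality $|\sin(x)|\leq |x|^\nu$ for $\nu\in[0,1]$; (iii) the $\Snorm$-bound, which for $z=(t,x)$, $\zprime=(s,y)$, $0<s<t$ gives $|\determ(z)-\determ(\zprime)|\leq s^{-(1-\eta)/2}\|z-\zprime\|_\frks\|\determ\|_\Snorm$. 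A key structural observation is that $K(\zprime,z)=0$ for $s>t$ (heat kernel causality), so the $\zprime$-integration in \eqref{setup:eq-resonant-operator} is automatically restricted to $s\in(0,t)$, which is exactly the regime where the $\Snorm$-weight applies.

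Combining (i)--(iii) reduces \eqref{resonant:eq-resonant-estimate} to estimating
\[
\|\determ\|_\Snorm^\nu \int_0^t s^{-\nu(1-\eta)/2}\,ds\int_{\T^2}\|z-\zprime\|_\frks^{-2-\beta^2/(2\pi)+\nu}\,dy.
\]
For each fixed $s$, I will handle the spatial integral using the parabolic distance $\|z-\zprime\|_\frks\sim |t-s|^{1/2}+|x-y|$; since $\nu\leq 1<\beta^2/(2\pi)$ (thanks to $\beta^2\geq 4\pi$), the exponent exceeds the critical value $-2$, so the spatial integral is $\lesssim |t-s|^{(\nu-\beta^2/(2\pi))/2}$. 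The remaining $s$-integral is a beta integral that is finite precisely when $\nu>\beta^2/(2\pi)-2$ (the hypothesis) and $\nu(1-\eta)<2$ (automatic from $\nu\leq 1$ and $\eta>-1$). Evaluating it and then multiplying by the $\Nnorm$-weight $t^{(1-\eta)/2}$, the $\pm\nu/2$ terms in the exponent cancel and one reads off $\tfrac{3}{2}-\tfrac{\beta^2}{4\pi}-\tfrac{(1-\nu)\eta}{2}=\gamma/2$, as required.

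For the difference estimate \eqref{resonant:eq-resonant-estimate-difference}, I would replace (ii) by the Lipschitz bound $|\sin(a)-\sin(b)|\leq |a-b|$ applied to $a=\beta(\determ_1(z)-\determ_1(\zprime))$ and the analogous $b$. The rest of the computation is identical, carried out in the $\nu=1$ setting with $\determ$ replaced by $\determ_1-\determ_2$, and yields $T^{3/2-\beta^2/(4\pi)}\|\determ_1-\determ_2\|_\Snorm$.

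The only delicate point — and the one that pins down the threshold — is the integrability at $s\to t$, which demands $\nu>\beta^2/(2\pi)-2$. This is precisely where the subcritical structure enters: the product $K\cdot \Jc_\varep^-$ fails to be locally integrable once $\beta^2\geq 4\pi$, and integrability is restored only through the vanishing of $\sin(\beta(\determ(z)-\determ(\zprime)))$ as $\zprime\to z$, captured via $\Snorm$. The condition $\beta^2<6\pi$ is exactly what guarantees that the Lipschitz value $\nu=1$ is admissible, so the Lipschitz-in-$\determ$ estimate follows for free from the single-function bound — no further interpolation is required.
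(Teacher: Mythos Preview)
Your proposal is correct and follows essentially the same approach as the paper's proof: bound $|K\cdot\Jc_\varep^-|$ by $\|z-\zprime\|_\frks^{-2-\beta^2/(2\pi)}$, use $|\sin(x)|\lesssim|x|^\nu$ together with the weighted Hölder bound from $\Snorm$, perform the spatial integral to get $(t-s)^{(\nu-\beta^2/(2\pi))/2}$, then the time integral as a beta integral, and finally read off the exponent $\gamma/2$. The difference estimate via $|\sin(a)-\sin(b)|\leq|a-b|$ and the $\nu=1$ computation is likewise identical to the paper's argument.
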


\begin{remark}
We remark that, due to our assumptions on $\eta$ and $\nu$, the parameter $\gamma$ is positive. \revision{Also, the assumption $\beta^2 < 6\pi$ crucially allows us to take $\nu \leq 1$. Once $\beta^2 \geq 6\pi$, we would only be able to estimate $\Res(\determ)$ superlinearly in $\determ$ (and moreover we would need a higher derivative of $\determ$).}
\end{remark}

\begin{proof}
From Definition \ref{setup:def-resonant-operator}, \eqref{setup:eq-K-estimate}, and \eqref{setup:eq-J-estimate}, we obtain for all $z=(t,x)\in [0,T] \times \T^2$ that
\begin{equation}\label{resonant:eq-resonant-estimate-p1}
\begin{aligned}
    \big| \Res_\varep(\determ)(z)\big|
&= \bigg| \frac{\beta}{2} \int \dzprime \, K(\zprime;z) \Jc_\varep^{-}(\zprime-z) \sin \big( \beta (\determ(z)-\determ(\zprime)) \big) \bigg| \\ 
&\lesssim \int \dzprime \, \| z-\zprime \|_{\frks}^{-2-\frac{\beta^2}{2\pi}} \Big| \sin \big( \beta (\determ(z)-\determ(\zprime)) \big)\Big|.
\end{aligned}
\end{equation}
We now note that
\begin{equation}\label{resonant:eq-resonant-estimate-p2}
\begin{aligned}
&\, \Big| \sin \big( \beta (\determ(z)-\determ(\zprime)) \big)\Big|
\lesssim \min\big( 1, | \determ(z)-\determ(\zprime)|\big) \\
\lesssim&\,  | \determ(z)-\determ(\zprime)|^\nu \lesssim s^{-\frac{1-\eta}{2} \nu} \| z-\zprime \|_{\frks}^{\nu} \| \determ \|_{\Snorm}^{\nu}.
\end{aligned}
\end{equation}
From \eqref{resonant:eq-resonant-estimate-p2}, it then directly follows that
\begin{equation}\label{resonant:eq-resonant-estimate-p3}
\begin{aligned}
&\, \int \dzprime \, \| z-\zprime \|_{\frks}^{-2-\frac{\beta^2}{2\pi}} \Big| \sin \big( \beta (\determ(z)-\determ(\zprime)) \big)\Big| \\
\lesssim&\, \bigg( \int \dzprime \,  s^{-\frac{1-\eta}{2} \nu}  \| z-\zprime \|_{\frks}^{-2-\frac{\beta^2}{2\pi}+\nu} \bigg) \| \determ \|_{\Snorm}^\nu. 
\end{aligned}
\end{equation}
In order to control the explicit space-time integral in \eqref{resonant:eq-resonant-estimate-p3}, we first use that 
\begin{align}
    \int \dzprime \,  s^{-\frac{1-\eta}{2} \nu}  \| z-\zprime \|_{\frks}^{-2-\frac{\beta^2}{2\pi}+\nu} 
    &= \int_0^t \ds \int_{\T^2} \dy \, s^{-\frac{1-\eta}{2} \nu} \Big( (t-s)^{\frac{1}{2}} + |x-y| \Big)^{-2-\frac{\beta^2}{2\pi}+\nu} \notag \\
    &\lesssim \int_0^t s^{-\frac{1-\eta}{2} \nu} (t-s)^{\frac{1}{2} \big(-\frac{\beta^2}{2\pi}+\nu\big)}. \label{resonant:eq-resonant-estimate-p4}
\end{align}
The inequality in \eqref{resonant:eq-resonant-estimate-p4} is elementary, and can easily be obtained by splitting the $y$-integral into the regions $|x-y|\lesssim (t-s)^{\frac{1}{2}}$ and $|x-y|\gtrsim (t-s)^{\frac{1}{2}}$. Using that both $-\frac{1-\eta}{2}\nu$ and $\frac{1}{2}\big(-\frac{\beta^2}{2\pi}+\nu\big)$ are greater than $-1$, which is due to our assumptions on $\eta$ (recall \eqref{eq:eta}) and $\nu$, we further obtain that 
\begin{equation}
\int_0^t s^{-\frac{1-\eta}{2} \nu} (t-s)^{\frac{1}{2} \big(-\frac{\beta^2}{2\pi}+\nu\big)} \lesssim t^{-\frac{1-\eta}{2}\nu+\frac{1}{2} \big(-\frac{\beta^2}{2\pi}+\nu\big)+1}.
\end{equation}
As a result, it holds that 
\begin{equation}\label{resonant:eq-resonant-estimate-p5}
\begin{aligned}
&\, \sup_{0<t\leq T} \sup_{x\in \T^2} t^{\frac{1-\eta}{2}}  \int \dzprime \,  s^{-\frac{1-\eta}{2} \nu}  \| z-\zprime \|_{\frks}^{-2-\frac{\beta^2}{2\pi}+\nu} \\
\lesssim&\, \sup_{0<t\leq T} t^{\frac{1}{2} \big( (1-\eta)(1-\nu)+\nu-\frac{\beta^2}{2\pi}+2\big)} =  T^{\frac{\gamma}{2}}. 
\end{aligned}
\end{equation}
By combining \eqref{resonant:eq-resonant-estimate-p1}, \eqref{resonant:eq-resonant-estimate-p3}, and \eqref{resonant:eq-resonant-estimate-p5}, we obtain the desired first estimate \eqref{resonant:eq-resonant-estimate}. The proof of \eqref{resonant:eq-resonant-estimate-difference} is similar as the proof of \eqref{resonant:eq-resonant-estimate} with $\nu=1$. The only difference is that, instead of \eqref{resonant:eq-resonant-estimate-p2}, we use the estimate 
\begin{align*}
&\, \Big| \sin \big( \beta (\determ_1(z)-\determ_1(\zprime)) \big) 
- \sin \big( \beta (\determ_2(z)-\determ_2(\zprime)) \big)\Big| \\
\lesssim&\, \Big| \big( \determ_1(z)-\determ_1(\zprime) \big) - \big( \determ_2(z)-\determ_2(\zprime)\big) \Big| 
\lesssim \| z-\zprime\|_{\frks} \big\| \determ_1 - \determ_2 \big\|_{\Snorm}, 
\end{align*}
which then leads to \eqref{resonant:eq-resonant-estimate-difference}. 
\end{proof}

Equipped with Lemma \ref{resonant:lem-resonant-estimate}, we can now prove our estimates of $\determ_\varep$, i.e., our first main proposition. 

\begin{proof}[Proof of Proposition \ref{setup:prop-resonant-equation}:] We first prove the existence of a unique solution of \eqref{setup:eq-resonant-equation} in $\Snorm([0,T])$, which is shown using a standard contraction-mapping argument. To this end, let $C_0\geq 0$ remain to be chosen, let 
\begin{equation}\label{resonant:eq-proof-1}
\mathbb{B}:= \Big\{ \determ \in \Snorm ([0,T])\colon \| \determ \|_{\Snorm([0,T])} \leq C_0 \| u_0 \|_{\Cs_x^{\eta}} \Big\}. 
\end{equation}
and, for any $\determ \in \Snorm([0,T])$, let 
\begin{equation}\label{resonant:eq-proof-2}
\Gamma_\varep \determ := e^{t(-m^2+ \Delta\revision{/2})} \determ + \Duh \big[ \Res_\varep (\determ) \big]. 
\end{equation}
From \eqref{resonant:eq-duhamel-integral} and Lemma \ref{resonant:lem-resonant-estimate} with $\nu=1$, we obtain for all $\determ \in \mathbb{B}$ that
\begin{equation}\label{resonant:eq-proof-3}
\begin{aligned}
\big\| \Gamma_\varep \determ \big\|_{\Snorm([0,T])}
&\lesssim \| u_0 \|_{\Cs_x^{\eta}} + T^{\frac{1+\eta}{2}} T^{\frac{3}{2}-\frac{\beta^2}{4\pi}} \| \determ \|_{\Snorm([0,T])} \\
&\lesssim \| u_0 \|_{\Cs_x^{\eta}} + T^{\frac{1+\eta}{2}} T^{\frac{3}{2}-\frac{\beta^2}{4\pi}} C_0 \| u_0 \|_{\Cs_x^{\eta}}.
\end{aligned}
\end{equation}
Similarly, we also obtain for all  $\determ_1,\determ_2 \in \mathbb{B}$ that 
\begin{equation}\label{resonant:eq-proof-4}
\big\| \Gamma_\varep \determ_1 - \Gamma_\varep \determ_2  \big\|_{\Snorm([0,T])} \lesssim T^{\frac{1+\eta}{2}} T^{\frac{3}{2}-\frac{\beta^2}{4\pi}} \| \determ_1 -\determ_2 \|_{\Snorm([0,T])}.
\end{equation}
We now first choose a sufficiently large $C_0=C_0(\beta,\eta)$, then choose a sufficiently small $T_0=T_0(\beta,\eta,C_0)$, and finally require that $T\leq T_0$. Our estimates \eqref{resonant:eq-proof-3} and \eqref{resonant:eq-proof-4} then show that $\Gamma$ is a contraction on $\mathbb{B}$, which implies that the resonant equation \eqref{setup:eq-resonant-equation} has a unique solution $\determ_\varep$ in $\Snorm([0,T])$. 

It now only remains to prove \eqref{setup:eq-resonant-equation-estimate}, and we therefore now let $\determ_\varep$ be the solution of \eqref{setup:eq-resonant-equation}. Since the desired estimate \eqref{setup:eq-resonant-equation-estimate} only gets worse as $\delta$ increases, it suffices to treat the case in which $\delta$ is sufficiently small depending on $\beta$ and $\eta$. Using \revision{the} definition of the $\Snorm$-norm from \eqref{resonant:eq-Snorm} and \eqref{resonant:eq-heat-contraction}, we first obtain that
\begin{equation}\label{resonant:eq-proof-5}
\begin{aligned}
\| \determ_\varep(T) \|_{ \Cs_x^{\eta}} 
&\leq \big\| e^{T (-m^2+\Delta\revision{/2})} u_0 \big\|_{\Cs_x^{\eta}}
+ \big\| \Duh \big[ \Res_\varep ( \determ_\varep ) \big] \big\|_{C_t^0 \Cs_x^{\eta}([0,T])}\\
&\leq e^{-m^2 T} \| u_0 \|_{\Cs_x^{\eta}} + \big\| \Duh \big[ \Res_\varep ( \determ_\varep ) \big] \big\|_{\Snorm([0,T])}.   
\end{aligned}
\end{equation}
We emphasize that the pre-factor of the $u_0$-term in \eqref{resonant:eq-proof-5} is the same as in \eqref{setup:eq-resonant-equation-estimate}. Since $\determ_\varep\in \mathbb{B}$, it then follows from  \eqref{resonant:eq-duhamel-integral} and Lemma \ref{resonant:lem-resonant-estimate} with $\nu=\frac{\beta^2}{2\pi}-2+\delta$ that
\begin{equation*}
\big\| \Duh \big[ \Res_\varep ( \determ_\varep ) \big] \big\|_{\Snorm([0,T])} 
\lesssim T^{\frac{1+\eta}{2}} T^{\frac{1}{2} \big(  3  - \frac{\beta^2}{2\pi} - (1-\nu) \eta \big)}  \| u_0 \|_{\Cs_x^{\eta}}^{\frac{\beta^2}{2\pi}-2+\delta}. 
\end{equation*}
Since 
\begin{equation*}
\frac{1+\eta}{2} + \frac{1}{2} \Big( 3- \frac{\beta^2}{2\pi} - (1-\nu) \eta \Big) = 2 - \frac{\beta^2}{4\pi} + \frac{\nu\eta}{2} \geq 2 - \frac{\beta^2}{4\pi} +\frac{\eta}{2}, 
\end{equation*}
this then completes the proof.
\end{proof}

\section{Stochastic estimates of monopoles and dipoles}\label{section:objects}

In this section, we prove that the stochastic estimates from Proposition \ref{prop:stochastic-estimates} for the modified model $\Pi^{\determ,\varep}_z$ are satisfied for monopoles and dipoles, i.e., for 
\begin{equation*}
\tau  \in \bigg\{ \, \monop, \, \monom, \, \, \dippp, \, \dippm,\, \dipmp,\, \dipmm \, \bigg\}.
\end{equation*}
We emphasize again that the modified model $\Pi^{\determ,\varep}_z$ depends on the deterministic function $\determ$, and that it is important that the stochastic estimates are uniform in $\determ$. In the case $\determ=0$, the stochastic estimates for monopoles and dipoles were previously obtained in \cite{HS16}. In the general case, the basic idea is to combine the arguments in \cite{HS16} with the estimate $|e^{\pm \icomplex \beta \determ}|\leq 1$. This basic idea, however, is much easier to state than implement, since the identities and estimates for the moments in \cite{HS16} are rather involved.

In Appendix \ref{section:appendix-moments} below, we present the complete proof of Proposition \ref{prop:stochastic-estimates}, which relies on the arguments in \cite{CHS2018} rather than \cite{HS16}. In particular, the proof in Appendix \ref{section:appendix-moments} also yields the estimates from this section (Lemma \ref{objects:lem-monopole} and Lemma \ref{objects:lem-dipole}). This  section is therefore not strictly necessary for the proof of our main theorem, but it still serves an expository purpose. The reader should find it  easier to follow the necessary modifications to the arguments in \cite{CHS2018}, which will be presented in Appendix \ref{section:appendix-moments}, after seeing similar modifications of the arguments in \cite{HS16}.

\begin{remark}[Notation for test functions]
In this section, our notation for test-functions will slightly differ from that of Proposition \ref{prop:stochastic-estimates}. In particular, we will use $\varphi$ instead of $\psi$ to denote a test-function. This is done to stay close to the notation of \cite{CH16}, whose proof we are closely following. 
\end{remark}

\begin{lemma}[Stochastic estimates of monopoles]\label{objects:lem-monopole}
Let $0<\beta^2<8\pi$ and let $\determ \colon \R \times \T^2\rightarrow \R$. 
Furthermore, let $p\geq 1$, let $\varphi$ be a test-function which is supported on the unit ball in $\R \times \T^2$ with $L^\infty$ norm bounded by $1$, let $\lambda \in (0,1]$ be a scale, and let $z\in  \R \times \T^2$ be a space-time point. Finally, let $\tau \in \{ \raisebox{-0.2ex}{\monop}, \raisebox{-0.2ex}{\monom} \}$. Then, it holds that 
\begin{equation}\label{objects:eq-first-order}
\E \Big[ \big| \big \langle \Pi^{\determ,\varep}_z \tau  , \varphi_z^\lambda \big\rangle \big|^{2p} \Big]^{\frac{1}{2p}} 
\lesssim_p \lambda^{-\frac{\beta^2}{4\pi}}.
\end{equation}
\end{lemma}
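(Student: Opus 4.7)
The trees $\monop$ and $\monom$ have strictly negative homogeneity and carry no polynomial decoration, so no recentering arises: $\Pi^{\determ,\varep}_z \monop = \xi_+^{\determ,\varep}$, and symmetrically for $\monom$. The two cases are interchanged by complex conjugation, so it suffices to estimate $\E[|\langle \xi_+^{\determ,\varep}, \varphi_z^\lambda\rangle|^{2p}]$.

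Since $\determ$ is real-valued, $\overline{\xi_+^{\determ,\varep}} = \xi_-^{\determ,\varep}$. Writing $|a|^{2p}=a^p\bar{a}^p$ and expanding gives
\begin{equs}
\E\big[|\langle \xi_+^{\determ,\varep}, \varphi_z^\lambda\rangle|^{2p}\big] = \int \Big(\prod_{i=1}^{2p} \varphi_z^\lambda(w_i)\Big) \prod_{j=1}^{p} e^{\icomplex \beta \determ(w_j)} \prod_{j=p+1}^{2p} e^{-\icomplex \beta \determ(w_j)} \E\Big[\prod_{j=1}^p \xi_+^{\varep}(w_j) \prod_{j=p+1}^{2p} \xi_-^{\varep}(w_j)\Big] dw_1\cdots dw_{2p}.
\end{equs}
A direct Gaussian calculation, together with the normalization \eqref{eq:gmc-renorm-constant}, shows that the inner expectation equals $\prod_{1\leq j<k\leq 2p} \exp\big(-\beta^2 s_j s_k \E[\Phi_\varep(w_j) \Phi_\varep(w_k)]\big)$ with $s_j = +1$ for $j \leq p$ and $s_j = -1$ otherwise; in particular it is real and positive. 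Since $|e^{\pm\icomplex \beta \determ}|\leq 1$, bringing absolute values inside the integral yields
\begin{equs}
\E\big[|\langle \xi_+^{\determ,\varep}, \varphi_z^\lambda\rangle|^{2p}\big] \leq \int \Big(\prod_{i=1}^{2p} |\varphi_z^\lambda(w_i)|\Big)\, \E\Big[\prod_{j=1}^p \xi_+^{\varep}(w_j) \prod_{j=p+1}^{2p} \xi_-^{\varep}(w_j)\Big] dw_1\cdots dw_{2p},
\end{equs}
which is precisely the bound one obtains in the unmodified case $\determ \equiv 0$, and is manifestly uniform in $\determ$.

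By \eqref{eq:Jc-covariance} and \eqref{setup:eq-J-estimate}, the right-hand side is dominated by a constant times the classical neutral Coulomb-gas integral
\begin{equs}
\int \Big(\prod_{i=1}^{2p} |\varphi_z^\lambda(w_i)|\Big) \prod_{\substack{j<k \\ s_j = s_k}} (\|w_j - w_k\|_{\frks} + \varep)^{\beta^2/(2\pi)} \prod_{\substack{j<k \\ s_j \neq s_k}} (\|w_j - w_k\|_{\frks} + \varep)^{-\beta^2/(2\pi)}\, dw_1\cdots dw_{2p}.
\end{equs}
For $\beta^2 < 8\pi$, the standard Onsager-type bound for this integral, carried out for sine-Gordon monopoles in \cite[Lemma 4.1]{HS16} and implicit in \cite[Section 5]{CH16}, shows that it is at most $C_p \lambda^{-p\beta^2/(2\pi)}$. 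Taking $2p$-th roots delivers the claimed estimate $\lambda^{-\beta^2/(4\pi)}$.

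\textbf{Main obstacle.} There is essentially no analytic obstacle here: all of the hard work has already been done by the classical Coulomb-gas bounds of \cite{HS16, CH16}. The whole novelty lies in the trivial inequality $|e^{\pm \icomplex \beta \determ}|\leq 1$, which absorbs the $\determ$-dependence without any loss and thereby makes the estimate uniform in $\determ$. This template---dominate by the modulus, then quote the unmodified bound---is precisely what must be iterated, with more bookkeeping, for the dipole (and later tripole) cases treated in the rest of the section.
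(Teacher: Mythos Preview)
Your proof is correct and follows essentially the same route as the paper: identify $\Pi^{\determ,\varep}_z\monop=\xi_+^{\determ,\varep}$, reduce to one sign by complex conjugation, expand the moment, strip off the $e^{\pm\icomplex\beta\determ}$ factors using $|e^{\pm\icomplex\beta\determ}|\leq 1$, and land on the unmodified Coulomb-gas integral handled in \cite{HS16}. Two minor points: first, your expansion $|a|^{2p}=a^p\bar a^p$ presumes integer $p$, so you should (as the paper does) invoke H\"older to reduce to $p=N\in\N$ before expanding; second, the relevant result in \cite{HS16} is Theorem~3.2 and the display~(3.8) there, not Lemma~4.1.
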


\begin{proof} 
We obtain from the definition of the model that $\Pi^{\determ,\varep}_z \raisebox{-0.3ex}{\monop}=e^{\icomplex \beta \determ} \xi_+^\varep$ and $\Pi^{\determ,\varep}_z \raisebox{-0.3ex}{\monom}=e^{-\icomplex \beta \determ}\xi_-^\varep$, see also \eqref{appendix:step-model-monopole} below. Due to this, $\Pi^{\determ,\varep}_z \raisebox{-0.3ex}{\monom}$ is the complex-conjugate of $\Pi^{\determ,\varep}_z \raisebox{-0.3ex}{\monop}$, and it therefore suffices to bound $\Pi^{\determ,\varep}_z \raisebox{-0.3ex}{\monop}$. 
 Due to H\"{o}lder's inequality, it further suffices to only treat the case $p=N$, where $N$ is a positive integer. 

The desired estimate \eqref{objects:eq-first-order} follows from minor modifications of the proof of \cite[Theorem~3.2]{HS16}, which corresponds to $\determ=0$, and we only describe the necessary modifications. To this end, we first recall that $\Jc_\varep$ is the covariance-function of $\xi_{\pm}^\varep$ and $\xi_{\pm}^\varep$, while $\Jc_\varep^-$ is the covariance-function of $\xi_{\pm}^\varep$ and $\xi_{\mp}^\varep$, see \eqref{eq:Jc-covariance}. Arguing as on  \cite[p. 943]{HS16}, we then obtain the moment formula 
\begin{equation}\label{objects:eq-first-p2} 
\begin{aligned}
&\hspace{-4ex}\E \Big[ \big| \big \langle e^{\icomplex \beta \determ} \xi_+^\varep, \varphi_z^\lambda \big\rangle \big|^{2N} \Big] \\ 
= \int \Big(&
\prod_{i} \big( e^{\icomplex\beta \determ(z_i)} \varphi_z^\lambda(z_i) \big) 
\, \prod_j \big( e^{-\icomplex \beta \determ(y_j)} \varphi_z^\lambda(y_j)\big) \\
& \hspace{-6ex}\times 
\prod_{l<m} \Jc_\varep(z_l-z_m) \prod_{n<o} \Jc_\varep(y_n-y_o) \prod_{i,j} \Jc^{-}_\varep(z_i-y_j) \Big) \dz_N\hdots \dz_1 \dy_N \hdots \dy_1.
\end{aligned}
\end{equation}
In \eqref{objects:eq-first-p2}, the indices, $i,j,l,m,n$, and $o$ are all implicitly restricted to the set $\{1,\hdots,N\}$. 
Using the boundedness of the complex exponential and the support properties of the test-function $\varphi$, we now estimate
\begin{equation}\label{objects:eq-first-p3} 
\big| e^{\icomplex\beta \determ(z_i)}\big| \leq 1,~ 
\big| e^{-\icomplex \beta \determ(y_j)}  \big| \leq 1, ~ 
\big| \varphi_z^\lambda(z_i) \big| \leq \ind_\Lambda(z_i-z), ~ \text{and} ~~ 
\big| \varphi_z^\lambda(y_j) \big| \leq \ind_\Lambda(y_j-z),
\end{equation}
where $\Lambda$ is the parabolic ball of radius $\lambda$. By first inserting the estimates from \eqref{objects:eq-first-p3} into \eqref{objects:eq-first-p2} and then using a change of variables to reduce to the case $z=0$, we obtain that 
\begin{equation}\label{objects:eq-first-p4}
\begin{aligned}
&\,\E \Big[ \big| \big \langle e^{\icomplex \beta \determ} \xi_+^\varep, \varphi_z^\lambda \big\rangle \big|^{2N} \Big] \\
\leq&\,  \int_{\Lambda^{2N}}
\prod_{l<m} \Jc_\varep(z_l-z_m) \prod_{n<o} \Jc_\varep(y_n-y_0) \prod_{i,j} \Jc^{-}_\varep(z_i-y_j) \dz_N\hdots \dz_1 \dy_N \hdots \dy_1.
\end{aligned}
\end{equation}
We emphasize that, in contrast to the left-hand side, the right-hand side of \eqref{objects:eq-first-p4} does not depend on $\determ$. The right-hand side of \eqref{objects:eq-first-p4} is exactly the same expression as in \cite[(3.8)]{HS16} and, as a result of the argument in \cite{HS16}, we have that
\begin{equation}\label{objects:eq-first-p5}
\begin{aligned}
&\, \int_{\Lambda^{2N}}
\prod_{l<m} \Jc_\varep(z_l-z_m) \prod_{n<o} \Jc_\varep(y_n-y_0) \prod_{i,j} \Jc^{-}_\varep(z_i-y_j) \dz_N\hdots \dz_1 \dy_N \hdots \dy_1 \\
\lesssim&\, \lambda^{-\frac{\beta^2}{2\pi} N} = \big( \lambda^{-\frac{\beta^2}{4\pi}} \big)^{2N}. 
\end{aligned}
\end{equation}
By combining \eqref{objects:eq-first-p4} and \eqref{objects:eq-first-p5}, we then obtain the desired estimate \eqref{objects:eq-first-order}.
\end{proof}

\begin{remark}
The estimates in \cite[Theorem 3.2]{HS16} control the moments of $\langle \xi_+^\varep,\varphi_z^\lambda \rangle$ and the only information about $\varphi_z^\lambda$ used in their proofs is the bound $|\varphi_z^\lambda|\leq \ind_\Lambda$.   Since the bound is also satisfied by $e^{\icomplex \beta \determ} \varphi_z^\lambda$ and  
\begin{equation*}
\big \langle e^{\icomplex \beta \determ} \xi_+^\varep , \varphi^\lambda_z \big \rangle = \big \langle  \xi_+^\varep , e^{\icomplex \beta \determ} \varphi^\lambda_z \big \rangle,
\end{equation*}
it is then clear that the proof of \cite[Theorem 3.2]{HS16} can be used to obtain \eqref{objects:eq-first-order}. However, this trick of pairing up $e^{\pm \icomplex \beta \determ}$-factors with the test-function $\varphi_z^\lambda$ only works for the monopoles, and it breaks down for the dipoles and tripoles. 
\end{remark}

\begin{lemma}[Stochastic estimates of dipoles]\label{objects:lem-dipole}
Let $4\pi \leq \beta^2 < 6\pi$ and let $\determ \colon \R \times \T^2\rightarrow \R$. Furthermore, let $p\geq 1$, let $\varphi$ be a test-function which is supported on the unit ball in $\R \times \T^2$ with $L^\infty$ norm bounded by $1$, let $0<\lambda\leq 1$ be a scale, and let $z\in \R \times \T^2$ be a space-time point. Finally, let 
\begin{equation*}
\tau \in \bigg\{ \, \dippp, \, \dippm,\, \dipmp,\, \dipmm \, \bigg\}.
\end{equation*}
Then, it then holds that 
\begin{equation}\label{objects:eq-second-order}
\begin{aligned}
\E \Big[ \Big| \big \langle \Pi^{\determ,\varepsilon}_z \tau , \varphi^\lambda_z \big \rangle  \Big|^{2p} \Big]^{\frac{1}{2p}}
\lesssim \lambda^{2-\frac{\beta^2}{2\pi}-\kappa}.
\end{aligned}
\end{equation}
\end{lemma}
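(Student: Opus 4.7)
The plan is to mirror the strategy of Lemma \ref{objects:lem-monopole} at a higher combinatorial complexity, reducing the problem to the $\determ \equiv 0$ case, for which the bound is established in \cite[Theorem 3.2 and Sections 3.2--3.3]{HS16}. By H\"{o}lder's inequality, it suffices to treat $p = N$ a positive integer. Since $\dipmp$ is the complex conjugate of $\dippm$, and since $\dipmm$ is the complex conjugate of $\dippp$, we only need to treat $\dippp$ (a same-sign dipole, requiring no Wick subtraction) and $\dippm$ (an opposite-sign dipole with a Wick subtraction). The first step is to extract, from Appendix \ref{appendix:modified-model-calculations}, the explicit formula for $\Pi^{\determ,\varep}_z \tau(y)$; once the positive renormalization terms (which arise because $|\mc{I}\Xi_s|_{\mathfrak{s}} = 2 - \bar\beta/2 > 1$) are accounted for, the model will have the schematic form of a Wick-renormalized product of $\xi_\pm^{\determ,\varep}$ and $K \ast \xi_\pm^{\determ,\varep}$, minus constants and linear-in-$(y-z)$ centering terms in which $\determ$ enters only through factors $e^{\pm\icomplex \beta \determ(\cdot)}$.

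The second step is to expand the $2N$-th moment. Using $|a|^{2N} = a^N \bar{a}^N$ and the fact that conjugation swaps $\xi_+^{\determ,\varep} \leftrightarrow \xi_-^{\determ,\varep}$, one obtains a $4N$-fold space-time integral whose integrand is a product of factors of $K$, factors of $\varphi^\lambda_z$, deterministic phase factors of the form $e^{\pm \icomplex \beta \determ(\cdot)}$, and a Gaussian expectation of a product of $\xi_\pm^\varep$ factors. Since $\determ$ is deterministic, the phases pull outside the expectation. By the standard complex-Gaussian computation (as in \eqref{objects:eq-first-p2}, but now with $2N$ upper and $2N$ inner integration variables), the Gaussian expectation evaluates to a product of factors $\Jc_\varep$ for same-sign pairs and $\Jc^-_\varep$ for opposite-sign pairs, exactly as in \cite{HS16}. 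The Wick subtractions built into the model (both the explicit $\E[\xi_+^{\determ,\varep}(K \ast \xi_-^{\determ,\varep})]$ term in $\dippm$ and the centering subtractions from positive renormalization) precisely remove the divergent ``contracted'' pairings, leaving only the convergent combinations treated in~\cite[Section~3.3]{HS16}.

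The third step is the trivial-but-crucial bound $|e^{\pm \icomplex \beta \determ(\cdot)}| \leq 1$ applied to every $\determ$-phase; together with $|\varphi^\lambda_z| \leq \mathbf{1}_{\Lambda}$, this removes all $\determ$-dependence from the integrand. What remains is a pure $\determ \equiv 0$ integral, identical to (or bounded by) the one appearing in \cite[Theorem 3.2]{HS16} for the neutral BPHZ dipole, which is estimated there by $\lambda^{2N(2 - \beta^2/(2\pi))}$. Absorbing an arbitrarily small $\kappa$ into the exponent (coming from the $\kappa$ in $\bar\beta$) yields the claim. The main obstacle is really the first step: one must carefully write out $\Pi^{\determ,\varep}_z \tau$ for each dipole including all positive-renormalization pieces, and verify that every such piece still has the property that $\determ$ enters only through unit-modulus phases. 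Beyond this bookkeeping, the argument is a direct transcription of \cite{HS16} with the factor-of-$\determ$-is-a-phase trick applied termwise, just as in the monopole case.
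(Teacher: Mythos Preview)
Your overall strategy is correct and matches the paper's: reduce to $p=N$, expand moments, observe that $\determ$ enters only through unit-modulus phases which can be bounded by $1$, and then quote the $\determ\equiv 0$ estimates from \cite{HS16}. A few points of imprecision are worth flagging.

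First, two small errors. The homogeneity is $|\mc{I}\Xi_s|_{\mathfrak{s}} = 2-\bar\beta$, not $2-\bar\beta/2$; since $\bar\beta>1$ for $\beta^2\geq 4\pi$, this is strictly less than $1$, so there are \emph{no} linear-in-$(y-z)$ centering terms for the dipole---only the constant subtraction $(K\ast\xi_\pm^{\determ,\varep})(z)$. Also, the relevant reference in \cite{HS16} for dipoles is Section~4 (in particular the function $\mathcal{H}$ and \cite[Proposition~4.16]{HS16}), not Theorem~3.2 or Section~3.3, which treat monopoles.

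Second, and more substantively, the paper is more careful than your sketch about \emph{where} the trivial bound $|e^{\pm\icomplex\beta\determ}|\leq 1$ is applied. The paper first splits the model for $\dippm$ into a purely deterministic piece (which is estimated directly, pointwise) and a centered random piece of the form $\int dw\, e^{\icomplex\beta(\determ(\bar z)-\determ(w))}(K(\bar z-w)-K(z-w))(\psi_+^\varep(\bar z)\psi_-^\varep(w)-\E[\cdots])$. For the latter, the $2N$-th moment is written in the form of \cite[(4.10)]{HS16} with the single modification that each edge $e\in\mc{R}$ carries an extra factor $e^{\icomplex\beta\determ_z(e)}$ sitting \emph{outside} the signed combinatorial sum $\mathcal{H}(x,\Jc_\varep)$. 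Only then is the phase bounded by $1$, yielding an integral against $|\mathcal{H}(x,\Jc_\varep)|$, which \cite[Proposition~4.16]{HS16} controls. Your phrasing ``Wick subtractions precisely remove the divergent pairings, leaving only convergent combinations'' glosses over the fact that $\mathcal{H}$ is a signed sum whose individual terms diverge; the point is that the $\determ$-phases factor out of $\mathcal{H}$ cleanly, so bounding them first does not destroy the internal cancellations. This is the ``difficulty is to find the location'' remark the paper makes, and it is the one place your sketch should be sharpened.
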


\begin{proof} 
The proof is a minor modification of the argument in \cite[Section 4]{HS16}. The only difference lies in new  $e^{\pm \icomplex\beta  \determ}$-factors, and the difficulty is to find the location in the argument of \cite[Section 4]{HS16} at which the trivial estimate  $|e^{\pm \icomplex\beta  \determ}|\leq 1$ can be used.

Due to Hölder's inequality, it suffices to prove the estimate for $p=N$, where $N$ is a positive integer.
Since $(\xi_+^\varep,\xi_-^\varep)$ and $(\xi_-^\varep,\xi_+^\varep)$ have the same law, it suffices to prove the estimates for $\makeinline{\dippp}$ and $\makeinline{\dippm}$. We only prove the estimate for $\makeinline{\dippm}$, because the estimate for $\makeinline{\dippp}$ is easier (see \cite[Section 4.4]{HS16}). To match the notation in \cite[Section 4]{HS16}, rather than the notation in \cite{CHS2018}, we recall that $\psi_\varep^+$ and $\psi_\varep^-$ are defined as 
\begin{equation*}
\psi_\varep^\pm \overset{\textup{def}}{=} \biglcol\,  e^{\pm \icomplex\beta  \Phi_\varep} \, \bigrcol \hspace{-0.4ex}= e^{\pm \icomplex\beta  \Phi_\varep} e^{\frac{\beta^2}{2} \mathcal{Q}_\varep(0)}.
\end{equation*}
We also recall that, by definition, 
\begin{equation}\label{objects:eq-second-1}
\begin{aligned}
\Pi^{\determ,\varep}_z \bigg[ \,  \dippm \, \bigg](\widebar{z})
&= e^{\icomplex\beta  \determ(\zbar)} \psi_\varep^+(\zbar) \Big( \big( K \ast (e^{-\icomplex\beta \determ} \psi_\varep^- ) \big) (\zbar) 
-  \big( K \ast (e^{-\icomplex\beta \determ} \psi_\varep^- ) \big) (z) \Big) \\
&- \E \Big[ e^{\icomplex\beta  \determ(\zbar)} \psi_\varep^+(\zbar) \big( K \ast (e^{-\icomplex\beta \determ} \psi_\varep^- ) \big) (\zbar)  \Big].
\end{aligned}
\end{equation}
Similar as in \cite[(4.4)]{HS16}, it is convenient to rewrite \eqref{objects:eq-second-1} as 
\begin{align}
&\, \int \dw \, e^{\icomplex\beta  (\determ(\zbar)-\determ(w))} \big( K(\zbar-w) - K(z-w) \big) \Big( \psi_\varep^+(\zbar) \psi_\varep^-(w) - \E \big[ \psi_\varep^+(\zbar) \psi_\varep^-(w) \big] \Big) \label{objects:eq-second-2} \\
-&\, \int \dw  \, e^{\icomplex\beta  (\determ(\zbar)-\determ(w))} K(z-w) \E \Big[ \psi_\varep^+(\zbar)  \psi_\varep^-(w)  \Big]. 
\label{objects:eq-second-3}
\end{align}
The second term \eqref{objects:eq-second-3} can easily be estimated. Indeed, similarly as in \cite[Lemma 4.1]{HS16}, it holds that 
\begin{align*}
\big| \eqref{objects:eq-second-3} \big| &\leq \int \dw \, \big| e^{\icomplex\beta  (\determ(\zbar)-\determ(w))}\big| K(z-w) \Jc_\varep^{-}(\zbar-w) \\
&\lesssim \int \dw \, \| z-w\|_\frks^{-2} \| \zbar -w \|_{\frks}^{-\frac{\beta^2}{2\pi}}
\lesssim \| z - \zbar \|_{\frks}^{2-\frac{\beta^2}{2\pi}}.
\end{align*}
As a result, it holds that 
\begin{align*}
\Big| \big\langle \eqref{objects:eq-second-3}, \varphi_z^\lambda \big\rangle \Big| \lesssim \int \dzbar\,  \| z - \zbar \|_{\frks}^{2-\frac{\beta^2}{2\pi}} |\varphi_z^\lambda(\zbar)| \lesssim \lambda^{2-\frac{\beta^2}{2\pi}},  
\end{align*}
which yields an acceptable contribution to \eqref{objects:eq-second-order}. It therefore remains to control the first term \eqref{objects:eq-second-2}, i.e., it remains to show that 
\begin{equation}\label{objects:eq-second-4}
\begin{aligned}
&\E \bigg[ \bigg| \int \dzbar \dw \, e^{\icomplex\beta  (\determ(\zbar)-\determ(w))} \varphi_z^\lambda (\zbar)\big( K(\zbar-w)-K(z-w) \big) \\
&\hspace{3ex}\times \Big( \psi_\varep^+(\zbar) \psi_\varep^-(w) - \E \big[ \psi_\varep^+(\zbar) \psi_\varep^-(w) \big] \Big) \bigg|^{2N} \bigg] 
\lesssim \lambda^{2N \big( 2 - \frac{\beta^2}{2\pi}-\kappa \big)}.
\end{aligned}
\end{equation}
In \cite[Section 4]{HS16}, the authors use translation-invariance to reduce to the case $z=0$. Due to the $e^{\pm \icomplex \beta \determ}$-factors, however, the left-hand side of \eqref{objects:eq-second-4} is not translation-invariant. In order to still  match the notation in  \cite[Section 4]{HS16} as closely as possible, we introduce $\determ_z(\cdot):=\determ(\cdot + z)$ and, using a change of variables, rewrite the left-hand side of \eqref{objects:eq-second-4} as
\begin{equation}\label{objects:eq-second-5}
\begin{aligned}
&\E \bigg[ \bigg| \int \dzbar \dw \, e^{\icomplex\beta  (\determ_z(\zbar)-\determ_z(w))} \varphi_0^\lambda(\zbar) \big( K(\zbar-w)-K(-w) \big) \\
&\hspace{3ex}\times\Big( \psi_\varep^+(\zbar) \psi_\varep^-(w) - \E \big[ \psi_\varep^+(\zbar) \psi_\varep^-(w) \big] \Big) \bigg|^{2N} \bigg]. 
\end{aligned}
\end{equation}
In the following, we use the  notation introduced on \cite[pp. 955-956]{HS16}. We also introduce the short-hand $\determ_z(e) := \determ_z(e_\downarrow)-\determ_z(e_\uparrow)$, which is similar to the short-hand $K(e)$ introduced on \cite[p. 956]{HS16}. Using the same argument as in the proofs of  \cite[(4.7) and (4.10)]{HS16}, we then obtain that 
\begin{equation}\label{objects:eq-second-6}
\eqref{objects:eq-second-5} = 
\int_{(\R^3)^{4N}} \dx \, 
\prod_{e\in \Rc} \Big(e^{\icomplex \beta \determ_z(e)} \varphi_0^\lambda(e_\downarrow) K(e) \Big) \mathcal{H}(x,\Jc_\varep).  
\end{equation}
We emphasize that the only difference between \eqref{objects:eq-second-6} and \cite[(4.10) for $k=1$]{HS16} lies in the $e^{\icomplex \beta \determ_z}$-factors. 
Using the triangle-inequality and the trivial estimate $|e^{\icomplex \beta \determ_z(e)}|\leq 1$, we now obtain that 
\begin{equation}\label{objects:eq-second-7}
\Big| \eqref{objects:eq-second-6} \Big| \leq
\int_{(\R^3)^{4N}} \dx \, 
\prod_{e\in \Rc} \Big(|\varphi_0^\lambda(e_\downarrow)| K(e) \Big) \big|\mathcal{H}(x,\Jc_\varep)\big|.  
\end{equation}
After using \cite[Proposition 4.16]{HS16} to  estimate\footnote{While \cite[Proposition 4.16]{HS16} is only stated as an estimate of $\mathcal{H}(x,\Jc)$, the same estimate holds for $|\mathcal{H}(x,\Jc)|$, which can be seen from the proof of \cite[Proposition 4.16]{HS16}. This is also how \cite[Proposition 4.16]{HS16} is used in \cite{HS16}, see e.g. the second display of \cite[Proof of Theorem 4.3]{HS16}.} $|\mathcal{H}(x,\Jc_\varep)|$, we then arrive at the same expression as in the second display of \cite[Proof of Theorem 4.3]{HS16}. The remaining part of the proof of \eqref{objects:eq-second-order} is therefore exactly as in \cite[Proof of Theorem 4.3]{HS16}.
\end{proof}

\section{Remainder bound}\label{section:remainder-bound}

In this section, we prove Proposition \ref{prop:remainder}. Let $\mc{R}_\varep^{\determ}$ denote the reconstruction operator defined in terms of $\mc{Z}(\premodel^{\determ, \varep})$. We first show that the equation for $w_\varep$ from \eqref{eq:w-equation} can be lifted to a fixed point problem on the space of modelled distributions, and moreover the equation in this abstract space has {\it no dependence on the initial data $u_0$}, due to our definition of the modified model. In the following, recall that $\mu = \frac{\beta^2}{4\pi} + 2\kappa$, where $\kappa$ is as in \eqref{eq:kappa}. \revision{In the following, the space $\mc{D}^{\mu, 0}_0$ of singular modelled distributions is as defined in \cite[Definition 6.2]{H14}, and $\mathcal{P}$ is the integration operator corresponding to convolution with the kernel $K$ (see \cite[Section 5]{H14})}


\begin{proposition}\label{prop:reconstruction}
Let $\varep \in (0, 1]$ and let $\determ_\varep$ be the solution to the resonant equation \eqref{setup:eq-resonant-equation}. For some $\tau \in (0, 1]$, there exists a unique solution $W \in \mc{D}_0^{\mu, 0}$ to the following fixed point problem on $[0, \tau] \times \T^2$:
\begin{equs}\label{eq:W-fixed-pt-model-distribution}
W = \mc{P} \ind_{t > 0} \Big( \frac{1}{2\icomplex} \Big(e^{\icomplex \beta G v_\varep(0)} e^{\icomplex \beta W} \Xi_+ - e^{-\icomplex \beta G v_\varep(0)} e^{-\icomplex \beta W} \Xi_-\Big) + R_\varep \Big).
\end{equs}
Moreover, the time of existence $\tau$ depends inverse polynomially on the size of the model $\vertiii{\mc{Z}(\premodel^{\determ_\varep, \varep})}_{\mu; [-1, 2] \times \T^2}$ and $v_\varep(0)$. Finally, $w_\varep = \mc{R}_\varep^{\determ_\varep} W$, i.e., the reconstruction of $W$ is the solution to \eqref{eq:w-equation}.
\end{proposition}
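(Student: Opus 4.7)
The plan is to lift \eqref{eq:w-equation} to an abstract fixed point on modelled distributions, following the scheme of \cite[Section 5]{CHS2018} but built on our modified model $\mc{Z}(\premodel^{\determ_\varep, \varep})$ in place of the neutral BPHZ model. The argument has three pieces: checking that the right-hand side of \eqref{eq:W-fixed-pt-model-distribution} is well-defined as a modelled distribution in $\mc{D}_0^{\mu, 0}$, running a short-time Banach contraction, and identifying the reconstruction with $w_\varep$. The only genuinely new ingredient relative to \cite{CHS2018} is this last identification, which must exploit the cancellation built into our choice of $\Res_\varep$; the rest is essentially a transcription.

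For the first piece, $\mu = \frac{\beta^2}{4\pi} + 2\kappa < \frac{3}{2}$ (using $\beta^2 < 6\pi$), so the Taylor expansion of $e^{\pm \icomplex \beta W}$ truncated to homogeneities $<\mu$ is finite and only involves polynomial decorations together with symbols in $\mc{T}^-$. Multiplication by $\Xi_\pm$ yields an abstract distribution of regularity $\mu - \bar{\beta}$, and applying $\mc{P}$ gains two degrees of regularity, so the output lands back in $\mc{D}_0^{\mu,0}$ with room to spare. The smooth multiplier $e^{\pm \icomplex \beta G v_\varep(0)}$, with $v_\varep(0) = -\Phi_\varep(t_0) \in \Cs_x^\eta$, is handled as a standard smooth-function multiplier on modelled distributions, with the initial-time singularity absorbed by the zero weight. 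All of these ingredients transcribe directly from \cite[Section 5]{CHS2018} and Hairer's general framework \cite{H14}. The Banach contraction then follows from the same estimates, with Lipschitz constant bounded by $C\tau^\delta P\bigl(\vertiii{\mc{Z}(\premodel^{\determ_\varep, \varep})}_{\mu;[-1,2]\times\T^2},\|\Phi_\varep(t_0)\|_{\Cs_x^\eta}\bigr)$ for some $\delta > 0$ and polynomial $P$, giving existence, uniqueness, and the claimed inverse-polynomial lower bound on $\tau$.

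The key verification is that reconstructing the abstract right-hand side of \eqref{eq:W-fixed-pt-model-distribution} reproduces exactly the right-hand side of \eqref{eq:w-equation}, with the extra $-\Res_\varep(\determ_\varep)$ term emerging from the renormalization subtractions in $\premodel^{\determ_\varep, \varep}$. Unlike in the stationary setting of \cite{CHS2018}, the two dipole expectations entering $\premodel^{\determ_\varep,\varep}\dippm$ and $\premodel^{\determ_\varep,\varep}\dipmp$ no longer cancel each other; their discrepancy is precisely the origin of the resonance. Using
\begin{equs}
\E\bigl[\xi_\pm^{\determ_\varep, \varep}(z)\,(K\ast \xi_\mp^{\determ_\varep, \varep})(z)\bigr] = \int \dzprime\, K(z-\zprime)\,e^{\pm \icomplex \beta(\determ_\varep(z)-\determ_\varep(\zprime))}\,\Jc_\varep^-(z - \zprime),
\end{equs}
and expanding $e^{\pm \icomplex \beta W}$ to first order in $W = \frac{1}{2\icomplex}\mc{P}\ind_{t>0}(\Xi_+ - \Xi_-) + \cdots$, one sees that $\dippm$ and $\dipmp$ enter $\frac{1}{2\icomplex}(e^{\icomplex\beta W}\Xi_+ - e^{-\icomplex\beta W}\Xi_-)$ with opposite signs and equal combinatorial weight $\frac{\beta}{4\icomplex}$. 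The imbalance between the corresponding subtractions therefore evaluates to $\frac{\beta}{4\icomplex}\cdot 2\icomplex \int K(z-\zprime)\sin(\beta(\determ_\varep(z)-\determ_\varep(\zprime)))\Jc_\varep^-(z-\zprime)\,\dzprime = -\Res_\varep(\determ_\varep)(z)$, matching Definition \ref{setup:def-resonant-operator} precisely. This shows that $\mc{R}_\varep^{\determ_\varep} W$ solves \eqref{eq:w-equation}, and hence $w_\varep = \mc{R}_\varep^{\determ_\varep} W$ by classical uniqueness. The main obstacle is this cancellation computation, which while conceptually clean requires careful bookkeeping of signs and combinatorial factors when matching the explicit form of $\Res_\varep$ against the subtractions hardcoded into \eqref{eq:neutral-bphz-premodel-begin-def}--\eqref{eq:neutral-bphz-premodel-end-def}.
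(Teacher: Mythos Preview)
Your approach matches the paper's: both defer the abstract fixed-point existence and contraction to \cite{CHS2018} and isolate the reconstruction identification as the genuinely new content, with $-\Res_\varep(\determ_\varep)$ emerging from the imbalance between the two mixed-dipole subtractions in the modified pre-model. Your sign and coefficient bookkeeping for that imbalance is correct.

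One point to tighten: for $\frac{16\pi}{3}\le\beta^2<6\pi$ the expansion of $e^{\pm\icomplex\beta W}$ must go to \emph{second} order in $\tilde{W}=W-\langle W,\mbf{1}\rangle\mbf{1}$, not first, so tripoles appear in $F(W)$; and already at first order the like-sign dipoles $\Xi_\pm\mc{I}\Xi_\pm$ enter alongside the mixed ones. The paper checks explicitly (via \eqref{eq:reconstruction-zero}, established in Corollary~\ref{cor:reconstruction-calculation}) that $\big(\Pi^{\determ_\varep,\varep}_z\tau\big)(z)=0$ for every tripole $\tau$ and for both like-sign dipoles, so that only the mixed-dipole imbalance survives upon reconstruction at the base point. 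Your proposal treats only the mixed-dipole contribution and leaves these vanishings unstated; without them the reconstruction would a priori pick up extra terms beyond $-\Res_\varep(\determ_\varep)$, so you should include them.
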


The local existence statement was proven in the proof of \cite[Theorem 1.1]{CHS2018}. Thus, the main thing to prove is that the reconstruction of $W$ is the solution to \eqref{eq:w-equation}. We will provide the proof shortly, after some preliminary discussion. Recalling Remarks \ref{remark:blackbox-0} and \ref{remark:blackbox}, our proof is by explicit calculation, but we would not be surprised if there is a general result that applies, e.g. a suitable modification of the results of \cite{CHS2018} to the case of non-stationary noise. We assume that $\frac{16\pi}{3} \leq \beta^2 < 6\pi$, because the case $4\pi \leq \beta^2 < \frac{16\pi}{3}$ is simpler, as tripoles do not appear.
In this case, the basis elements of $\mc{T}$ with negative or zero homogeneity are:
\begin{equs}
\mbf{1}, \Xi_{s_1}, X_j \Xi_{s_1}, \Xi_{s_1} \mc{I}\Xi_{s_2}, \Xi_{s_1} \mc{I} \Xi_{s_2} \mc{I} \Xi_{s_3}, \Xi_{s_1} \mc{I}(\Xi_{s_2} \mc{I} \Xi_{s_3}), ~~ s_1, s_2, s_3 \in \{+, -\}. 
\end{equs}
Next, we list the basis elements of $\mc{T}$ which (1) have homogeneity in $[0, \mu)$ and (2) lie either in the image of $\mc{I}$, or in the polynomial structure:
\begin{equs}\label{eq:W-span}
\mbf{1}, X_1, X_2, \mc{I} \Xi_{s_1}, \mc{I}(\Xi_{s_1} \mc{I} \Xi_{s_2}), ~~ s_1, s_2 \in \{+, -\}.
\end{equs}
Define $F$ by
\[ F(W) := \frac{1}{2\icomplex} \Big(e^{\icomplex \beta G v_\varep(0)} e^{\icomplex \beta W} \Xi_+ - e^{-\icomplex \beta G v_\varep(0)} e^{-\icomplex \beta W} \Xi_-\Big),\]
so that the fixed point problem \eqref{eq:W-fixed-pt-model-distribution} may be written
\begin{equs}\label{eq:W-abstract-equation}
W = \mc{P} \ind_{t > 0} \big( F(W) + R_\varep\big).
\end{equs}
By the discussion in the proof of \cite[Theorem 1.1]{CHS2018}, we have that $F \colon \mc{D}_0^{\mu, 0} \ra \mc{D}_{-\bar{\beta}}^{\mu - \bar{\beta}, 2 \eta - \bar{\beta}}$. 

Finally, before we get to the proof of Proposition \ref{prop:reconstruction}, we will need some knowledge of $\model^{\determ_\varep, \varep}_z$, which follows from the calculations of Appendix \ref{appendix:modified-model-calculations}. In particular, by Corollary \ref{cor:reconstruction-calculation} and the definition \eqref{setup:def-resonant-operator} of $\Res_\varep$, 
\begin{equation}\label{eq:dipole-resonance}
\begin{aligned}
\model^{\determ_\varep, \varep}_z (\Xi_- \mc{I} \Xi_+)(z) - \model^{\determ_\varep, \varep}_z (\Xi_+ \mc{I} \Xi_-)(z) 
&= -\frac{4\icomplex}{\beta} \Res_\varep(\determ_\varep)(z).
\end{aligned}
\end{equation}
Moreover, by the same corollary, we have that
\begin{equation}\label{eq:reconstruction-zero}
\begin{aligned}
\model_z^{\determ_\varep, \varep}(\Xi_- \mc{I} \Xi_-)(z) &= \model_z^{\determ_\varep, \varep}(\Xi_+ \mc{I} \Xi_+)(z) = 0, \\
\big(\model_z^{\determ_\varep, \varep} \tau\big)(z) &= 0 \text{ for any tripole $\tau$.}
\end{aligned}
\end{equation}

\begin{proof}[Proof of Proposition \ref{prop:reconstruction}]
From the definition of $F$, the solution $W$ is spanned by the basis elements in \eqref{eq:W-span}. For brevity, write $\mc{R} = \mc{R}^{\determ_\varep}_\varep$. Applying $\mc{R}$ to both sides of \eqref{eq:W-abstract-equation}, we get that
\[ \mc{R} W = G\mc{R} F(W) + G R_\varep, \]
and so it remains to verify that 
\begin{equs}
\mc{R} F(W) = \frac{1}{2\icomplex} \Big( e^{\icomplex \beta G v_\varep(0)} e^{\icomplex \beta \mc{R} W} \xi_+^{\determ_\varep, \varep} - e^{-\icomplex \beta G v_\varep(0)} e^{-\icomplex \beta \mc{R} W} \xi_-^{\determ_\varep, \varep}\Big) - \Res_\varep(\determ_\varep).
\end{equs}
Let $f_0 = \langle W, \mbf{1} \rangle$ and $\tilde{W} = W - f_0 \mbf{1}$. We have that
\begin{equs}
e^{\icomplex \beta W} &= e^{\icomplex \beta f_0} \Big( \mbf{1} + \icomplex \beta \tilde{W} + \frac{(\icomplex \beta)^2}{2} \tilde{W}^2\Big), \label{eq:exp-W-plus}\\
e^{-\icomplex \beta W} &= e^{-\icomplex \beta f_0} \Big( \mbf{1} -\icomplex \beta \tilde{W} + \frac{(-\icomplex \beta)^2}{2} \tilde{W}^2\Big), \label{eq:exp-W-minus}
\end{equs}
where we used that $\beta^2 < 6\pi$ so that the higher-order terms in the expansion have homogeneity greater than $\mu$ (note that $\tilde{W}$ has minimum homogeneity $2 - \bar{\beta}$). Inserting these expansions into $F(W)$, we have that
\begin{equs}
F(W) &= \frac{1}{2\icomplex} \Big(e^{\icomplex \beta (G v_\varep(0) + f_0)} \Xi_+ - e^{-\icomplex \beta (G v_\varep(0) + f_0)} \Xi_-\Big) \\
&+ \frac{\beta}{2} \tilde{W} \Big(e^{\icomplex \beta (G v_\varep(0) + f_0)} \Xi_+ + e^{-\icomplex \beta (Gv_\varep(0) + f_0)} \Xi_-\Big) \\
&-\frac{\beta^2}{4\icomplex} \tilde{W}^2 \Big(e^{\icomplex
 \beta (Gv_\varep(0) + f_0)} \Xi_+ - e^{-\icomplex \beta (G v_\varep(0) + f_0)} \Xi_-\Big).
\end{equs}
Additionally, since $W \in \mc{D}_0^{\mu, 0}$, and since $W$ solves \eqref{eq:W-abstract-equation}, we have the following expansion for $W$ \revision{(here $f_i = \langle W, X_i \rangle$ for $i = 1, 2$)}: 
\begin{equs}
W = &f_0 \mbf{1} + f_1 X_1 + f_2 X_2 + \frac{1}{2\icomplex} \big(e^{\icomplex \beta (G v_\varep(0) + f_0)} \mc{I} \Xi_+ - e^{-\icomplex \beta (G v_\varep(0) + f_0)}  \mc{I} \Xi_-\big) \\
&+\frac{\beta}{4\icomplex} \Big(e^{2\icomplex \beta (G v_\varep(0) + f_0)} \mc{I}(\Xi_+ \mc{I} \Xi_+) - e^{-2\icomplex  \beta (G v_\varep(0) + f_0)} \mc{I}(\Xi_- \mc{I} \Xi_-)\Big) \\
&+ \frac{\beta}{4\icomplex} \Big(\mc{I}(\Xi_- \mc{I} \Xi_+) - \mc{I}(\Xi_+\mc{I} \Xi_-)\Big).
\end{equs}
Note that $\mc{R} W = f_0$, since $\mc{R} X_j = 0$ and $\mc{R} \mc{I} \tau = 0$ for all $\tau \in \mc{T}^-$. For brevity, let $\tilde{f}_0 = G v_\varep(0) + f_0 = G v_\varep(0) + \mc{R} W$. 
The desired result now follows from the three claims:
\begin{align}
\frac{1}{2\icomplex} \mc{R} \Big(\big(e^{\icomplex\beta \tilde{f}_0} \Xi_+ - e^{-\icomplex \beta \tilde{f}_0} \Xi_-\big)\Big) &= \frac{1}{2\icomplex}\big( e^{\icomplex \beta \tilde{f}_0} \xi_+^{\determ_\varep, \varep} - e^{-\icomplex \beta \tilde{f}_0} \xi_-^{\determ_\varep, \varep} \big), \label{eq:W-reconstruction-intermediate-1} \\
\frac{\beta}{2} \mc{R} \Big( \tilde{W} (e^{\icomplex \beta \tilde{f}_0} \Xi_+ + e^{-\icomplex \beta \tilde{f}_0}\Xi_-) \Big)(z) &=   -\Res_\varep(\determ_\varep)(z) , \label{eq:W-reconstruction-intermediate-2} \\
\mc{R} \Big(\tilde{W}^2 \big(e^{\icomplex \beta \tilde{f}_0} \Xi_+ - e^{-\icomplex \beta \tilde{f}_0} \Xi_-\big)\Big) &= 0.\label{eq:W-reconstruction-intermediate-3}
\end{align}
The first identity \eqref{eq:W-reconstruction-intermediate-1} follows immediately from the definition of $\model^{\determ_\varep, \varep}_z$ and the fact that $(\mc{R} f)(z) = (\model_z^{\determ_\varep, \varep} f)(z)$ (since our model is made of continuous functions). For the second identity \eqref{eq:W-reconstruction-intermediate-2}, we first note that in the product of $\tilde{W}$ with $\Xi_{\pm}$, we only need to look at the $\mc{I} \Xi_{\pm}$ terms in $\tilde{W}$, as the product between $\Xi_{\pm}$ and any polynomial term reconstructs to zero, and the product between $\Xi_{\pm}$ and an integrated dipole (i.e., a term of the form $\mc{I}(\Xi_{s_1} \mc{I} \Xi_{s_2})$) gives a tripole, which also reconstructs to zero by \eqref{eq:reconstruction-zero}. In summary, we thus have that
\begin{equs}
&\,\frac{\beta}{2}  \mc{R} \Big( \tilde{W} (e^{\icomplex \beta \tilde{f}_0} \Xi_+ + e^{-\icomplex \beta \tilde{f}_0}\Xi_-) \Big)(z) \\
=&\, \frac{\beta}{4\icomplex} \model_z^{\determ_\varep, \varep} \Big( (e^{\icomplex \beta \tilde{f}_0} \mc{I} \Xi_+ - e^{-\icomplex \beta \tilde{f}_0} \mc{I} \Xi_-) (e^{\icomplex \beta \tilde{f}_0} \Xi_+ + e^{-\icomplex \beta \tilde{f}_0} \Xi_-)\Big)(z).
\end{equs}
Expanding out the product and using the identities \eqref{eq:dipole-resonance} and \eqref{eq:reconstruction-zero}, we have that the above is further equal to 
\begin{equs}
\frac{\beta}{4\icomplex} \Pi_z^{\determ_\varep, \varep}(\Xi_- \mc{I} \Xi_+ - \Xi_+ \mc{I} \Xi_-)(z) = -\Res_\varep(\determ_\varep)(z).
\end{equs}
Finally, the identity \eqref{eq:W-reconstruction-intermediate-3} follows because anything involving a polynomial reconstructs to zero and all tripoles reconstruct to zero (by \eqref{eq:reconstruction-zero}).
\end{proof}

By combining Proposition~\ref{prop:reconstruction} with the stochastic estimates Proposition~\ref{prop:stochastic-estimates}, we now prove Proposition~\ref{prop:remainder}. 

\begin{proof}[Proof of Proposition \ref{prop:remainder}]
By the stochastic estimates (Proposition \ref{prop:stochastic-estimates}) and arguing similarly as in the proof of \cite[Theorem 10.7]{H14}, we have that a.s., our modified model $\mc{Z}(\premodel^{\determ_\varep, \varep})$ extends uniquely to an admissible model on the entire regularity structure $(A, \mathscr{T}, \mc{G})$, and moreover for any $p \geq 1$,
\begin{equs}
\E\Big[ \vertiii{\mc{Z}(\premodel^{\determ_\varep, \varep})}^p_{[-1, 2] \times \T^2} \Big] \lesssim 1,
\end{equs}
where the implicit constant is uniform in $\varep$ and $u_0$ (which enters into $\determ_\varep$). The desired result now follows by Proposition \ref{prop:reconstruction}, the fact that $w_\varep(0) = 0$, and general fixed point results which give that the first time that $\|w_\varep\|_{\Cs_x^{2-\bar{\beta}}}$ goes above 1 depends inverse-polynomially on the size of the model and the size of $e^{\icomplex \beta G v_\varep(0)}$. To bound the size of the latter, recall that $v_\varep(0) = -\Phi_\varep(0)$, and note that the size of $\Phi_\varep(0)$ (measured in the appropriate norm) has sub-Gaussian tails.
\end{proof}

\begin{appendix}

\section{From modified pre-model to modified model}\label{appendix:modified-model-calculations}

In this section, we explicitly compute the action of $\model^{\determ, \varep}_{x_\ostar}$ (which recall is obtained from our modified pre-model $\premodel^{\determ, \varep}$ defined in Section \ref{section:modified-model}) in several representative cases. The calculations for all other trees are very similar to the three particular cases that we look at. These calculations will be needed in Appendices \ref{section:appendix-moments} and \ref{appendix:more-calculations}.  \revision{As we will see, the modification by $\determ$ does not affect any of the calculations, in that the steps are exactly the same as for the case $\determ = 0$.} As a small point of notation, we switch to denoting the basepoint by $x_{\ostar}$ instead of $z$, because later in Appendix \ref{section:appendix-moments} we will closely follow the paper \cite{CHS2018}, which uses this alternative notation. We assume throughout that $4\pi \leq \beta^2 < 6\pi$, as otherwise there is no need to consider dipoles or tripoles.

\begin{lemma}\label{appendix:lem-premodel-to-model} 
For any $\varep>0$ and $x_\ostar \in (\R \times \T^2)^{\{ \ostar\}}$,  we have that
\begin{align}
\model^{\determ, \varep}_{x_\ostar}\bigg[\,  \dipmp\, \bigg] &=  \xi^{\determ,\varep}_-  \big( K \ast \xi^{\determ,\varep}_+ \big) - \xi^{\determ,\varep}_- \big(K \ast \xi^{\determ,\varep}_+\big)(x_\ostar) - \E \Big[ \xi^{\determ,\varep}_- \big(K \ast \xi^{\determ,\varep}_+\big)\Big], 
\label{appendix:eq-premodel-to-model-dipole} \\
\model^{\determ, \varep}_{x_\ostar}\bigg[\,  \vtripolempp\, \bigg] &= \xi^{\determ,\varep}_- \big( K \ast \xi^{\determ,\varep}_+ - (K \ast \xi^{\determ,\varep}_+ )(x_\ostar) \big)^2 \label{appendix:eq-premodel-to-model-vtripole}  \\
&- 2\E \Big[ \xi^{\determ,\varep}_- \big( K \ast \xi^{\determ,\varep}_+\big) \Big] 
\, \big( K \ast \xi^{\determ,\varep}_+ -  (K \ast \xi^{\determ,\varep}_+ )(x_\ostar) \big), \notag \\
\model^{\determ, \varep}_{x_\ostar}\left[\rule{0em}{7mm}\right.
\raisebox{-1ex}{\ltripolepmp} \left.\rule{0em}{7mm}\right] &= \xi^{\determ, \varep}_+ \bigg(K \ast \bigg( \model^{\determ, \varep}_{x_\ostar}\bigg[\,  \dipmp\, \bigg]\bigg) - K \ast \bigg(\model^{\determ, \varep}_{x_\ostar}\bigg[\,  \dipmp\, \bigg]\bigg)(x_{\ostar})\bigg)  \label{appendix:eq-premodel-to-model-ltripole-begin}\\
&\quad - \E\Big[\xi^{\determ, \varep}_+ \big( K \ast \xi^{\determ, \varep}_- \big) \Big] \Big(K \ast \xi^{\determ, \varep}_+ - (K \ast \xi^{\determ, \varep}_+)(x_{\ostar})\Big) \\
&\quad - \sum_{|k|_{\mathfrak{s}} = 1} (\cdot - x_{\ostar})^k \xi^{\determ, \varep}_+ \bigg(D^k K \ast \model^{\determ, \varep}_{x_\ostar}\bigg[\,  \dipmp\, \bigg]\bigg)(x_{\ostar}). \label{appendix:eq-premodel-to-model-ltripole-end}
\end{align}
\end{lemma}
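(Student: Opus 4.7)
The plan is to carry out the three identities by direct application of the standard construction of an admissible recentered model from a pre-model (as in \cite{H14,BHZ19}), which one can summarize schematically as:\ (i) set $\model^{\determ,\varep}_{x_\ostar}\Xi_s = \xi^{\determ,\varep}_s$ and $\model^{\determ,\varep}_{x_\ostar}X^k = (\cdot-x_\ostar)^k$; (ii) enforce multiplicativity on products; (iii) enforce admissibility through the Taylor-jet recentering formula
\[
\model^{\determ,\varep}_{x_\ostar}\mc{I}\tau(y) \;=\; (K\ast \model^{\determ,\varep}_{x_\ostar}\tau)(y) \;-\; \sum_{|k|_\mathfrak{s} < |\mc{I}\tau|_\mathfrak{s}} \frac{(y-x_\ostar)^k}{k!}\,(D^k K\ast \model^{\determ,\varep}_{x_\ostar}\tau)(x_\ostar);
\]
and (iv) keep the negative renormalization (the $\mathbb{E}[\cdot]$-subtractions) from the pre-model \eqref{eq:neutral-bphz-premodel-begin-def}--\eqref{eq:neutral-bphz-premodel-end-def}, appropriately recentered. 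The three cases will differ only in how many Taylor-jet terms step (iii) produces, which is dictated by the homogeneity count $|\Xi_s|_\mathfrak{s} = -\bar{\beta}$, $|\mc{I}\Xi_s|_\mathfrak{s} = 2-\bar{\beta}$ and $|\mc{I}(\Xi_{s_1}\mc{I}\Xi_{s_2})|_\mathfrak{s} = 4-2\bar{\beta}$.

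For the dipole \eqref{appendix:eq-premodel-to-model-dipole}, since $\beta^2 \in [4\pi,6\pi)$ gives $|\mc{I}\Xi_+|_\mathfrak{s} = 2-\bar{\beta} \in (1/2-\kappa,\,1-\kappa) \subset (0,1)$, only the $k=0$ Taylor-jet subtraction appears, so admissibility yields $\model^{\determ,\varep}_{x_\ostar}\mc{I}\Xi_+(y) = (K\ast\xi^{\determ,\varep}_+)(y) - (K\ast\xi^{\determ,\varep}_+)(x_\ostar)$. Multiplicativity against $\xi^{\determ,\varep}_-$, together with the BPHZ subtraction $-\mathbb{E}[\xi^{\determ,\varep}_-(K\ast\xi^{\determ,\varep}_+)]$ coming from the mirror of the pre-model in \eqref{eq:neutral-bphz-premodel-begin-def}ff, then produces exactly \eqref{appendix:eq-premodel-to-model-dipole}.

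For the v-tripole \eqref{appendix:eq-premodel-to-model-vtripole}, the same admissibility step is applied separately to each leg $\mc{I}\Xi_+$, and multiplicativity squares the recentered object, giving the first term $\xi^{\determ,\varep}_-\big((K\ast\xi^{\determ,\varep}_+) - (K\ast\xi^{\determ,\varep}_+)(x_\ostar)\big)^2$. The pre-model contains a BPHZ subtraction of the form $-2(K\ast\xi^{\determ,\varep}_+)\,\mathbb{E}[\xi^{\determ,\varep}_-(K\ast\xi^{\determ,\varep}_+)]$ (the combinatorial factor $2$ reflecting the two symmetric dipole sub-divergences); after recentering $K\ast\xi^{\determ,\varep}_+$ by subtracting its value at $x_\ostar$, this gives the second term in \eqref{appendix:eq-premodel-to-model-vtripole}. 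I will verify that no further recentering (or higher-order Wick contraction) is required, as $|\Xi_-\mc{I}\Xi_+\mc{I}\Xi_+|_\mathfrak{s} < 0$ but $> -1$ so no Taylor jets need be subtracted from the full tripole itself.

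The l-tripole \eqref{appendix:eq-premodel-to-model-ltripole-begin}--\eqref{appendix:eq-premodel-to-model-ltripole-end} is the most delicate and will be the main obstacle. Because $|\mc{I}(\Xi_-\mc{I}\Xi_+)|_\mathfrak{s} = 4-2\bar{\beta} \in (1-2\kappa,\,4/3-2\kappa) \subset (1,2)$ in the regime $\beta^2 \in [16\pi/3, 6\pi)$, step (iii) requires subtracting \emph{both} the $k=0$ and all $|k|_\mathfrak{s}=1$ Taylor-jet terms when we integrate the dipole model, which is the source of the last line \eqref{appendix:eq-premodel-to-model-ltripole-end}. Applying admissibility with input $\model^{\determ,\varep}_{x_\ostar}[\dipmp]$ (already computed) and then multiplying by $\xi^{\determ,\varep}_+$ produces the first line of the formula, up to the BPHZ subtraction inherited from the definition of $\premodel^{\determ,\varep}\ltripolepmp$; the latter precisely yields the middle term. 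The careful point that I will have to watch is that the $k=0$ recentering coming from the \emph{outer} integration on $\mc{I}(\Xi_-\mc{I}\Xi_+)$ and the $k=0$ recentering already baked into the inner dipole model (an $(x_\ostar)$-evaluation of $K\ast \xi^{\determ,\varep}_+$) combine consistently with the BPHZ subtraction $-\mathbb{E}[\xi^{\determ,\varep}_+(K\ast\xi^{\determ,\varep}_-)](K\ast\xi^{\determ,\varep}_+)$ so that the surviving expression is exactly \eqref{appendix:eq-premodel-to-model-ltripole-begin}--\eqref{appendix:eq-premodel-to-model-ltripole-end}; this is a book-keeping exercise which will occupy the bulk of the proof.
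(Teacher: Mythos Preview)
Your proposal is correct and follows essentially the same approach as the paper. The only cosmetic difference is that the paper organizes the computation via the factorization $\model^{\determ,\varep}_{x_\ostar} = \premodel^{\determ,\varep} \circ F^{\determ,\varep}_{x_\ostar}$ (computing the multiplicative recentering map $F$ on each tree first and then applying the pre-model), whereas you phrase it as ``canonical admissible recentering plus BPHZ correction''; these are two bookkeeping orderings of the same calculation, and your homogeneity counts and identification of the required Taylor-jet orders (in particular the $|k|_\frks=1$ terms for the line tripole) match the paper exactly.
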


\begin{proof}
We recall that the pre-model $\premodel^{\theta,\varep}$ was defined in \eqref{eq:neutral-bphz-premodel-begin-def}-\eqref{eq:neutral-bphz-premodel-end-def} above. The model $(\model^{\theta,\varep}_{x_\ostar})$, together with the linear maps $(F^{\theta,\varep}_{x_\ostar})$ on the model space $\mathscr{T}$, are then defined using the relation 
\begin{equation}\label{appendix:eq-F-to-model}
\model^{\theta,\varep}_{x_\ostar} = \premodel^{\theta,\varep} F^{\theta,\varep}_{x_\ostar}
\end{equation}
together with the properties of the linear maps given by 
\begin{align}
F^{\theta,\varep}_{x_\ostar} X^k &= (X-x_\ostar)^k, \qquad F^{\theta,\varep}_{x_\ostar} \Xi_\pm = \Xi_\pm, \label{appendix:eq-F-basic} \\ 
F^{\theta,\varep}_{x_\ostar} ( \tau \sigma) &= F^{\theta,\varep}_{x_\ostar}(\tau) \, F^{\theta,\varep}_{x_\ostar}(\sigma), \label{appendix:eq-F-product} \\ 
F^{\theta,\varep}_{x_\ostar} (  \mc{I}_m \tau ) &= \mc{I}_m F^{\theta,\varep}_{x_\ostar} ( \tau) 
 + \sum_{|k|_\frks < |\mc{I}_m \tau|_\frks } \frac{X^k}{k!} f^{\theta,\varep}_{x_\ostar}( \mc{I}_{m+k} \tau), \label{appendix:eq-F-integral}
\end{align}
where 
\begin{equation}\label{appendix:eq-model-to-f}
f^{\theta,\varep}_{x_\ostar} ( \mc{I}_n \tau ) = - \sum_{|l|_\frks < |\mc{I}_n\tau|_\frks} \frac{(-x_\ostar)^l}{l!} \big( D^{n+l} K \ast \model^{\theta,\varep}_{x_\ostar} \tau \big)(x_\ostar).
\end{equation}
We emphasize that the linear maps $(F^{\theta,\varep}_{x_\ostar})$ enter into the definition of the model $(\model^{\theta,\varep}_{x_\ostar})$ through \eqref{appendix:eq-F-to-model} and that the model $(\model^{\theta,\varep}_{x_\ostar})$ enters into the definition of the linear maps  $(F^{\theta,\varep}_{x_\ostar})$ through \eqref{appendix:eq-model-to-f}. Nevertheless, the model and linear maps can be computed recursively. \\

\emph{Step 1: Computing $\model^{\theta,\varep}_{x_\ostar}\, \raisebox{-0.1em}\monop$ and $\model^{\theta,\varep}_{x_\ostar} \, \raisebox{-0.1em}\monom$.} From the definition of $\premodel^{\theta,\varep}$, \eqref{appendix:eq-F-to-model}, and \eqref{appendix:eq-F-basic}, we have that
\begin{equation}\label{appendix:step-model-monopole}
\model^{\theta,\varep}_{x_\ostar} \, \raisebox{-0.1em} \monop = 
\premodel^{\theta,\varep} \, \raisebox{-0.1em} \monop = 
\xi^{\theta,\varep}_+ 
\qquad \text{and} \qquad
\model^{\theta,\varep}_{x_\ostar} \, \raisebox{-0.1em} \monom =
\premodel^{\theta,\varep} \, \raisebox{-0.1em} \monom = 
\xi^{\theta,\varep}_-. 
\end{equation}

\emph{Step 2: Computing $F^{\theta,\varep}_{x_\ostar} \, \scalebox{0.8}{$\integratedmonop$}$ and $F^{\theta,\varep}_{x_\ostar} \, \scalebox{0.8}{$\integratedmonom$}$.}
In this step, we show that
\begin{align}\label{appendix:step-F-integrated-monopole}
F^{\theta,\varep}_{x_\ostar} \integratedmonop = \integratedmonop - \big( K \ast \xi^{\theta,\varep}_+ \big)(x_\ostar) \, \mathbf{1}
\qquad \text{and} \qquad 
F^{\theta,\varep}_{x_\ostar} \integratedmonom = \integratedmonom - \big( K \ast \xi^{\theta,\varep}_- \big)(x_\ostar) \, \mathbf{1}.
\end{align}
By symmetry, it suffices to show the first identity in \eqref{appendix:step-F-integrated-monopole}. Using \eqref{appendix:eq-F-integral}, we first obtain that 
\begin{equation}\label{appendix:eq-integrated-monopole-e1}
F^{\theta,\varep}_{x_\ostar} \integratedmonop = F^{\theta,\varep}_{x_\ostar} \mc{I} \Xi_+ 
= \mc{I} F^{\theta,\varep}_{x_\ostar} \Xi_+ + \sum_{\substack{|k|_\frks<|\mc{I}\Xi_+|_\frks}} \frac{X^k}{k} f^{\theta,\varep}_{x_\ostar}(\mc{I}_k \Xi_+). 
\end{equation}
From \eqref{appendix:eq-F-basic}, it follows that the first summand in \eqref{appendix:eq-integrated-monopole-e1} agrees with the first summand in \eqref{appendix:step-F-integrated-monopole}. Since $|\mc{I}\Xi_+|_\frks = 2 + |\Xi_+|_\frks \in (0,1)$, it follows that the second summand in \eqref{appendix:eq-integrated-monopole-e1} is given by $f^{\theta,\varep}_{x_\ostar}(\mc{I}\Xi_+)\mathbf{1}$. From \eqref{appendix:eq-model-to-f} and \eqref{appendix:step-model-monopole}, it follows that 
\begin{equation*}
f^{\theta,\varep}_{x_\ostar}(\mc{I}\Xi_+) = - \big( K \ast \model^{\theta,\varep}_{x_\ostar} \Xi_+ \big) (x_\ostar) = - \big( K \ast \xi^{\theta,\varep}_+ \big)(x_\ostar), 
\end{equation*}
which completes the proof of \eqref{appendix:step-F-integrated-monopole}. \\  

\emph{Step 3: Computing $F^{\theta,\varep}_{x_\ostar}\, \scalebox{0.8}{$\dipmp$}$.} 
In this step, we show that 
\begin{equation}\label{appendix:step-F-dipole}
F^{\theta,\varep}_{x_\ostar} \dipmp = \dipmp - \big( K \ast \xi^{\theta,\varep}_+ \big) (x_\ostar)\, \raisebox{-0.1em} \monom. 
\end{equation}
This follows from \eqref{appendix:eq-F-basic}, \eqref{appendix:eq-F-product}, and \eqref{appendix:step-F-integrated-monopole}. Indeed, it holds that 
\begin{align*}
&\, F^{\theta,\varep}_{x_\ostar} \dipmp = F^{\theta,\varep}_{x_\ostar} \bigg( \raisebox{-0.1em}\monom \, \,  \integratedmonop \bigg)
= F^{\theta,\varep}_{x_\ostar} \Big(  \raisebox{-0.1em}\monom \Big) \, \, F^{\theta,\varep}_{x_\ostar} \bigg( \,  \integratedmonop \, \bigg) \\ 
=&\, \raisebox{-0.1em}\monom \, \bigg( \, \integratedmonop - \big( K \ast \xi^{\theta,\varep}_+ \big) (x_\ostar) \mathbf{1} \bigg) 
= \dipmp - \big( K \ast \xi^{\theta,\varep}_+ \big) (x_\ostar)  \,  \raisebox{-0.1em}\monom. 
\end{align*}

\emph{Step 4: Computing $\model^{\theta,\varep}_{x_\ostar}\, \scalebox{0.8}{$\dipmp$}$.}
In this step, we show \eqref{appendix:eq-premodel-to-model-dipole}. Using \eqref{appendix:eq-F-product} and \eqref{appendix:step-F-dipole}, as well as the definition of $\premodel^{\theta,\varep}$, it holds that 
\begin{align*}
 &\, \model^{\theta,\varep}_{x_\ostar}\, \dipmp 
= \premodel^{\theta,\varep} F^{\theta,\varep}_{x_\ostar} \, \dipmp 
= \premodel^{\theta,\varep} \bigg( \,  \dipmp - \big( K \ast \xi^{\theta,\varep}_+ \big) (x_\ostar) \, \raisebox{-0.1em} \monom \bigg) \\ 
=&\,  \xi^{\determ,\varep}_-  \big( K \ast \xi^{\determ,\varep}_+ \big)  - \E \big[ \xi^{\determ,\varep}_- \big(K \ast \xi^{\determ,\varep}_+\big)\big] - \xi^{\determ,\varep}_- \big(K \ast \xi^{\determ,\varep}_+\big)(x_\ostar),
\end{align*}
which agrees with the right-hand side of \eqref{appendix:eq-premodel-to-model-dipole}.\\

\emph{Step 5: Computing $F^{\theta,\varep}_{x_\ostar} \, \raisebox{0.15em}{\scalebox{0.8}{$\vtripolempp$}}$.} In this step, we show that 
\begin{align}\label{appendix:step-F-vtripole}
F^{\theta,\varep}_{x_\ostar} \, \vtripolempp  = \vtripolempp - 2 \big( K \ast \xi^{\theta,\varep}_+ \big)(x_\ostar) \, \dipmp + \big( K \ast \xi^{\theta,\varep}_+ \big)^2(x_\ostar) \, \raisebox{-0.1em}{\monom}. 
\end{align}
This follows from \eqref{appendix:eq-F-basic}, \eqref{appendix:eq-F-product}, and \ref{appendix:step-F-integrated-monopole}. Indeed, it holds that 
\begin{align*}
F^{\theta,\varep}_{x_\ostar} \, \vtripolempp 
= F^{\theta,\varep}_{x_\ostar} \bigg( \, \raisebox{-0.1em}{\monom} \, \integratedmonop \, \integratedmonop \,  \bigg)
= F^{\theta,\varep}_{x_\ostar} \big( \,  \raisebox{-0.1em}{\monom} \,  \big) 
\bigg( F^{\theta,\varep}_{x_\ostar}  \bigg( \, \integratedmonop \,  \bigg) \bigg)^2 
= \, \raisebox{-0.1em}{\monom} \bigg( \, \integratedmonop - \big( K \ast \xi^{\theta,\varep}_+ \big)(x_\ostar) \, \mathbf{1} \bigg)^2. 
\end{align*}
After expanding the product, we arrive at the right-hand side of \eqref{appendix:step-F-vtripole}. \\ 

\emph{Step 6: Computing $\model^{\theta,\varep}_{x_\ostar} \, \raisebox{0.15em}{\scalebox{0.8}{$\vtripolempp$}}$.} In this step, we show \eqref{appendix:eq-premodel-to-model-vtripole}. From \eqref{appendix:step-F-vtripole}, as well as the definition of $\premodel^{\theta,\varep}$, it follows that 
\begin{align*}
\model^{\theta,\varep}_{x_\ostar} \, \vtripolempp 
&= \premodel^{\theta,\varep} F^{\theta,\varep}_{x_\ostar} \, \vtripolempp \\
&= \premodel^{\theta,\varep} \, \vtripolempp 
- 2 \big( K \ast \xi^{\theta,\varep}_+ \big)(x_\ostar) \, \premodel^{\theta,\varep} \, \dipmp
 + \big( K \ast \xi^{\theta,\varep}_+ \big)^2(x_\ostar) 
 \premodel^{\theta,\varep} \raisebox{-0.1em}\monom \\ 
&= \xi^{\theta,\varep}_- \big( K \ast \xi^{\theta,\varep}_+ \big)^2 
- 2 \big(K\ast \xi^{\theta,\varep}_+ \big) \E \big[ \xi^{\theta,\varep}_- \big( K \ast \xi^{\theta,\varep}_+ \big) \big] \\
&- 2 \big( K \ast \xi^{\theta,\varep}_+ \big)(x_\ostar) \Big( \xi^{\theta,\varep}_- \big( K \ast \xi^{\theta,\varep}_+ \big) - \E \big[ \xi^{\theta,\varep}_- \big( K \ast \xi^{\theta,\varep}_+ \big) \big] \Big) \\ 
&+ \big( K \ast \xi^{\theta,\varep}_+ \big)^2(x_\ostar) \xi^{\theta,\varep}_-. 
\end{align*}
After rearranging the summands, we arrive at the right-hand side of \eqref{appendix:eq-premodel-to-model-vtripole}. \\ 

\emph{Step 7: Computing $F^{\theta,\varep}_{x_\ostar}$ for the line tripole.} In this step, we show that 
\begin{equation}\label{appendix:step-F-ltripole}
\begin{aligned}
F^{\theta,\varep}_{x_\ostar} \, \raisebox{-0.35em}\ltripolepmp 
&= \, \raisebox{-0.35em}\ltripolepmp - \big( K \ast \xi^{\theta,\varep}_+ \big)(x_\ostar) \, \dippm \\
&- \sum_{\substack{|k|_\frks + |l|_\frks \leq 1}} \frac{(-x_\ostar)^l}{l!} \bigg( D^{k+l} K \ast \model^{\theta,\varep}_{x_\ostar} \dipmp  \, \bigg) (x_\ostar) \, \frac{X^k}{k!} \, \raisebox{-0.1em}\monop. 
\end{aligned}
\end{equation}
To this end, we first obtain from \eqref{appendix:eq-F-basic} and \eqref{appendix:eq-F-product} that 
\begin{equation*}
F^{\theta,\varep}_{x_\ostar} \, \raisebox{-0.35em}\ltripolepmp 
= F^{\theta,\varep}_{x_\ostar} \Bigg( \raisebox{-0.1em}\monop \, \raisebox{-0.1em}\integrateddipmp \, \Bigg)
= F^{\theta,\varep}_{x_\ostar} \big( \, \raisebox{-0.1em}\monop \, \big)\, 
F^{\theta,\varep}_{x_\ostar} \Bigg(\, \raisebox{-0.1em}\integrateddipmp \, \Bigg)
=  \, \raisebox{-0.1em}\monop \, 
F^{\theta,\varep}_{x_\ostar} \Bigg(\, \raisebox{-0.1em}\integrateddipmp \, \Bigg).
\end{equation*}
In order to prove \eqref{appendix:step-F-ltripole}, it therefore remains to prove that 
\begin{equation}\label{appendix:eq-F-integrateddip}
\begin{aligned}
F^{\theta,\varep}_{x_\ostar}\,  \raisebox{-0.1em}\integrateddipmp 
&= \, \raisebox{-0.1em}\integrateddipmp  - \big( K \ast \xi^{\theta,\varep}_+ \big)(x_\ostar) \, \integratedmonom  \\
&- \sum_{\substack{|k|_\frks + |l|_\frks \leq 1}} \frac{(-x_\ostar)^l}{l!} \bigg( D^{k+l} K \ast \model^{\theta,\varep}_{x_\ostar} \dipmp  \, \bigg) (x_\ostar) \, \frac{X^k}{k!}.
\end{aligned}
\end{equation}
To this end, we first note that
\begin{equation*}
\Bigg| \, \, \raisebox{-0.1em}\integrateddipmp \, \,  \Bigg|_{\frks} = 4 - 2 \bar{\beta} = 4 - 2 \frac{\beta^2}{4\pi} - 2 \kappa \in (1,2). 
\end{equation*}
From \eqref{appendix:eq-F-integral}, it then follows that
\begin{equation}\label{appendix:eq-F-ltripole-p1}
F^{\theta,\varep}_{x_\ostar}\,  \raisebox{-0.1em}\integrateddipmp 
= F^{\theta,\varep}_{x_\ostar}\, \bigg(  \mc{I} \, \dipmp \, \bigg) 
= \mc{I} F^{\theta,\varep}_{x_\ostar} \bigg( \,   \dipmp \, \bigg) 
+ \sum_{|k|_\frks \leq 1} \frac{X^k}{k!} f^{\theta,\varep}_{x_\ostar} \bigg( \mc{I}_k \, \dipmp \, \bigg).
\end{equation}
From \eqref{appendix:step-F-dipole}, it follows that the first term in \eqref{appendix:eq-F-ltripole-p1} is given by 
\begin{equation}\label{appendix:eq-F-ltripole-p2}
\mc{I} F^{\theta,\varep}_{x_\ostar} \bigg( \,   \dipmp \, \bigg)  = \mc{I}  \bigg( \, \dipmp  \, \bigg)- \big( K \ast \xi^{\theta,\varep}_+ \big)(x_\ostar) \, \mc{I} \big( \raisebox{-0.1em}\, \monom \, \big) 
= \, \raisebox{-0.1em}\integrateddipmp - \big( K \ast \xi^{\theta,\varep}_+ \big)(x_\ostar) \, \integratedmonom. 
\end{equation}
From \eqref{appendix:eq-model-to-f}, we obtain for all $|k|_\frks\leq 1$ that 
\begin{equation}\label{appendix:eq-F-ltripole-p3}
f^{\theta,\varep}_{x_\ostar} \bigg( \mc{I}_k \, \dipmp \, \bigg)
= - \sum_{|l|_\frks \leq 1 - |k|_\frks} \frac{(-x_\ostar)^l}{l!} \bigg( D^{k+l} K \ast \model^{\theta,\varep}_{x_\ostar} \dipmp  \, \bigg) (x_\ostar).
\end{equation}
By combining \eqref{appendix:eq-F-ltripole-p1}, \eqref{appendix:eq-F-ltripole-p2}, and \eqref{appendix:eq-F-ltripole-p3}, we then obtain \eqref{appendix:eq-F-integrateddip}. \\

\emph{Step 8: Computing $\model^{\theta,\varep}_{x_\ostar}$ for the line tripole.} In this step, we show the identity in \eqref{appendix:eq-premodel-to-model-ltripole-begin}-\eqref{appendix:eq-premodel-to-model-ltripole-end}. Using \eqref{appendix:eq-F-to-model} and \eqref{appendix:step-F-ltripole}, we have that 
\begin{equation}\label{appendix:eq-model-ltripole-p0}
\begin{aligned}
\model^{\theta,\varep}_{x_\ostar} \, \raisebox{-0.35em}\ltripolepmp 
= \premodel^{\theta,\varep} F^{\theta,\varep}_{x_\ostar} \, \raisebox{-0.35em}\ltripolepmp  
&= \premodel^{\theta,\varep} \, \raisebox{-0.35em}\ltripolepmp - \big( K \ast \xi^{\theta,\varep}_+ \big)(x_\ostar) \, \premodel^{\theta,\varep} \, \dippm \\ 
&- \sum_{\substack{|k|_\frks + |l|_\frks \leq 1}} \hspace{-1ex} \frac{(-x_\ostar)^l}{l!} \bigg( D^{k+l} K \ast \model^{\theta,\varep}_{x_\ostar} \dipmp  \, \bigg) (x_\ostar) \, \premodel^{\theta,\varep} \Big( \frac{X^k}{k!} \, \raisebox{-0.1em}\monop \Big).
\end{aligned}
\end{equation}
From the definition of the pre-model $\premodel^{\theta,\varep}$, it follows that the first term in \eqref{appendix:eq-model-ltripole-p0} is given by 
\begin{align}
&\hspace{2ex} \premodel^{\theta,\varep} \, \raisebox{-0.35em}\ltripolepmp \notag \\
&= \xi_+^{\determ, \varep} K \ast \big(\xi_-^{\determ, \varep} (K \ast \xi_+^{\determ, \varep}) - \E[\xi_-^{\determ, \varep} (K \ast \xi_+^{\determ, \varep})]\big) - \E[\xi_+^{\determ, \varep} (K \ast \xi_-^{\determ, \varep})] (K \ast \xi_+^{\determ, \varep}) \label{appendix:eq-model-ltripole-p1} \\ 
&= \xi_+^{\theta,\varep} \bigg( K \ast \model^{\theta,\varep}_{x_\ostar} \, \dipmp \, \bigg) 
+ \xi^{\theta,\varep}_+ \big( K \ast \xi^{\theta,\varep}_- \big) \big( K \ast  \xi^{\theta,\varep}_+ \big)(x_\ostar)  
- \E[\xi_+^{\determ, \varep} (K \ast \xi_-^{\determ, \varep})] (K \ast \xi_+^{\determ, \varep}). \notag
\end{align}
From the definition of the pre-model $\premodel^{\theta,\varep}$, it also follows that the second term in \eqref{appendix:eq-model-ltripole-p0} is given by
\begin{equation}\label{appendix:eq-model-ltripole-p2}
\begin{aligned}
&\, - \big( K \ast \xi^{\theta,\varep}_+ \big)(x_\ostar) \, \premodel^{\theta,\varep} \, \dippm \\
=&\, - \big( K \ast \xi^{\theta,\varep}_+ \big)(x_\ostar) \, 
\xi^{\theta,\varep}_+ \big( K \ast \xi^{\theta,\varep}_- \big)
+  \big( K \ast \xi^{\theta,\varep}_+ \big)(x_\ostar) \, 
\E \big[ \xi^{\theta,\varep}_+ \big( K \ast \xi^{\theta,\varep}_- \big) \big].
\end{aligned}
\end{equation}
Finally, the third term in \eqref{appendix:eq-model-ltripole-p0} is given by 
\begin{equation}\label{appendix:eq-model-ltripole-p3}
\begin{aligned}
&\, - \sum_{\substack{|k|_\frks + |l|_\frks \leq 1}} \frac{(-x_\ostar)^l}{l!} \bigg( D^{k+l} K \ast \model^{\theta,\varep}_{x_\ostar} \dipmp  \, \bigg) (x_\ostar) \, \premodel^{\theta,\varep} \Big( X^k \, \raisebox{-0.1em}\monop \Big) \\ 
=&\, - \sum_{\substack{|k|_\frks + |l|_\frks \leq 1}} \frac{(-x_\ostar)^l}{l!} \bigg( D^{k+l} K \ast \model^{\theta,\varep}_{x_\ostar} \dipmp  \, \bigg) (x_\ostar) \, \frac{(\cdot)^k}{k!} \xi^{\theta,\varep}_+ \\
=&\,  - \sum_{\substack{|k|_\frks  \leq 1}} \frac{(\cdot-x_\ostar)^k}{k!} \bigg( D^{k} K \ast \model^{\theta,\varep}_{x_\ostar} \dipmp  \, \bigg) (x_\ostar) \xi^{\theta,\varep}_+.
\end{aligned}
\end{equation}
It is then a straightforward calculation to check that the sum of \eqref{appendix:eq-model-ltripole-p1}, \eqref{appendix:eq-model-ltripole-p2}, and \eqref{appendix:eq-model-ltripole-p3} agrees with the right-hand side of \eqref{appendix:eq-premodel-to-model-ltripole-begin}-\eqref{appendix:eq-premodel-to-model-ltripole-end}. 
\end{proof}

\section{General stochastic estimates}\label{section:appendix-moments}

The stochastic estimates for the modified dipoles and tripoles follow by small modifications of the arguments in \cite[Sections 3-5]{CHS2018}. Instead of reproducing all the arguments here, we indicate the (very slight) adjustments to the various results and proofs. As in the case of the monopole, the point is that after stripping away all the technical details, the core reason why the stochastic estimates still hold for our modified objects is the elementary inequality $|e^{\icomplex \beta \theta}| \leq 1$ for any $\theta \in \R$. 

We assume $\beta^2 < 6\pi$, as is the case all throughout the paper. This is more restrictive than the assumption $\beta^2 < 8\pi$ in \cite{CHS2018}. This will give simplifications in various places that we will comment on later. 

\begin{remark}Our assumption imposing a smaller range of $\beta$ seems essential, as it appears to us that the stochastic estimates of Proposition \ref{prop:stochastic-estimates} can only be made to be uniform in $\determ$ when $\beta^2 < 6\pi$. This is due to the fact that \cite[Lemma 3.10]{CHS2018} is trivial when $\beta^2 < 6\pi$, however once $\beta$ is beyond this threshold, a parity argument is needed, and unfortunately this argument does not seem to extend so well to the case of our modified model.
\end{remark}

We first indicate the adjustments to \cite[Proposition 3.3]{CHS2018}, which gives formulas for the model and moments of the model. Let all notation be as in \cite{CHS2018}. Fix a dipole or tripole $\bar{T}^{\bar{\mathfrak{n}}\bar{\mathfrak{l}}} \in \mc{T}^-$ and $p \in \N$. Because we are considering a negative-homogeneity tree (and $\beta^2 < 6\pi$), each node of $\bar{T}^{\bar{\mathfrak{n}}\bar{\mathfrak{l}}}$ is labeled with a plus or minus, so that $\bar{\mathfrak{l}} \colon N(T) \ra \{+, -\}$. The first modification to make is in the definition of the operators $(H_{\mc{J}, \mc{F}, S}^\determ, S \in \mc{F})$, which are now $\determ$-dependent. We recursively define
\begin{align}
\relax [H^{\determ}_{\mc{J}, \mc{F}, S} (\varphi)](x) := &\int_{\tilde{N}_{\mc{F}}(S)} dy \prod_{u \in \tilde{N}_{\mc{F}}(S)} e^{\icomplex \nodelabel(u) \beta \determ(y_u)}  \mc{J}^{L_{\mc{F}}(S)^{(2)}}(y \sqcup x_{\varrho_S}) \mrm{Ker}^{\mathring{K}_{\mc{F}}(S)}(y \sqcup x_{\varrho_S}) \notag \\
&\cdot H_{\mc{J}, \mc{F}, C_{\mc{F}}(S)}^\determ\Big[ \mrm{Ker}^{K^\ptl_{\mc{F}}(S)} \cdot (-\mathscr{Y}_S \varphi)\Big](x_{\tilde{N}(S)^c} \sqcup y),
\label{eq:H-determ}
\end{align}
where the base case $H^\determ_{\Jc, \mc{F}, \varnothing}$ is defined to be the identity operator. The only difference between the above definition and \cite[Equation (3.5)]{CHS2018} is the additional $\prod_{u \in \tilde{N}_{\mc{F}}(S)} e^{\icomplex \nodelabel(u) \beta \determ(y_u)}$ term which arises from the definition of our modified monopoles. Next, we define (cf. \cite[Equation~(3.8)]{CHS2018})
\begin{align}
M^{\determ}[\psi, \mc{J}] := &\sum_{\substack{\mc{G} \in \mathbb{F} \\ \mathscr{D} \sse \mathfrak{C} \backslash K(\mc{G})}} \int_{N(\mc{G}, D_{2p})}\hspace{-1ex} dy \prod_{u \in N(\mc{G}, D_{2p})} 
\hspace{-1ex} e^{\icomplex \nodelabel(u) \beta \determ(y_u)} \mc{J}^{L(\mc{G}, D_{2p})^{(2)}}(y) \cdot \mrm{Ker}^{K(\mc{G}, D_{2p}) \backslash \mathscr{D}}(y) \notag \\
& \cdot \bigg(\prod_{j=1}^{2p} \psi(y_{\varrho_{\bar{T}_j}})\bigg) \cdot \mrm{RKer}^{K(\mc{G}, D_{2p}) \cap \mathscr{D}}(z) \cdot X_{\bar{\mathfrak{n}}, \ostar}^{N(\mc{G}, D_{2p})}(z)\label{eq:M-determ} \\
&\cdot H^{\determ}_{\mc{J}, \mc{G}, \bar{\mc{G}}}\Big[ \mrm{RKer}^{K^{\downarrow}(\bar{\mc{G}}) \cap \mathscr{D}} \cdot \mrm{Ker}^{K^{\downarrow}(\bar{\mc{G}}) \backslash \mathscr{D}} X_{\bar{\mathfrak{n}}, \ostar}^{\tilde{N}(\bar{\mc{G}})}\Big](z) \notag
\end{align}
Again, the only difference between the above definition and \cite[Equation (3.8)]{CHS2018} is the presence of the term $\prod_{u \in N(\mc{G}, D_{2p})} e^{\icomplex \nodelabel(u) \beta \determ(y_u)}$ and the use of $H^{\determ}_{\mc{J}, \mc{G}, \bar{\mc{G}}}$ instead of $H_{\mc{J}, \mc{G}, \bar{\mc{G}}}$.

\begin{remark}
In \cite{CHS2018}, general ``$\varep$-assignments\revision{''} are considered, in order to show convergence of the models. In our case, we only need bounds which are uniform in $\varep$, so we will not need to discuss $\varep$-assignments here. Related to this, while \cite{CHS2018} considers a general collection of functions $\Jc$, we will always take $\Jc$ to denote the collection of functions $(\Jc_{\varep, e}, e \in L(D_{2p})^{(2)})$, where $\Jc_{\varep, e} = \Jc_\varep^{\mrm{sign}(e)}$ for all $e \in L(D_{2p})^{(2)}$. This is a slight abuse of notation, since we previously used $\Jc_\varep$ to denote the covariance function \eqref{eq:Jc-covariance}. We allow this abuse of notation because we do not need to consider general $\varep$-assignments, so that the function $\Jc_{\varep, e}$ corresponding to each pair $e$ is the same (up to taking reciprocals).
\end{remark}
 
The proof of the first identity in the following proposition is deferred to Appendix \ref{appendix:more-calculations}. It involves combining the calculations of Appendix \ref{appendix:modified-model-calculations} with some more calculations. We mention that these additional calculations may help the reader with absorbing some of the notation which has been introduced. The second identity in the proposition follows from the first, in the same way that \cite[(3.10)]{CHS2018} follows from \cite[(3.9)]{CHS2018}. 

\begin{proposition}[Analog of Proposition 3.3 of \cite{CHS2018}]\label{appendix:prop-model-and-moments}
Let $\varep > 0$ and let $(\Pi^{\determ, \varep}, \Gamma^{\determ, \varep})$ be the model defined in Section \ref{section:modified-model}. Then, for $x_{\ostar} \in (\R \times \T^2)^{\{\ostar\}}$, any test function $\psi$, any $j \in [2p]$, we have that (using the shorthand $\bar{T} = \bar{T}_j$)
\begin{align}
\Pi^{\determ, \varep}_{x_\ostar}[\bar{T}^{\bar{\mathfrak{n}} \bar{\mathfrak{l}}}](\psi) = \sum_{\substack{\mc{G} \in \mbb{F}_j \\ \mathscr{D} \sse \mathfrak{C}_j \backslash K(\mc{G})}} &\int_{N(\mc{G}, \bar{T})} dy \xi^{L(\mc{G}, \bar{T}), \varep}(y) \prod_{u \in N(\mc{G}, \bar{T})} e^{\icomplex \beta \nodelabel(u) \determ(y_u)} \cdot \mrm{Ker}^{K(\mc{G}, \bar{T}) \backslash \mathscr{D}}(y) \notag \\
&\cdot \psi(z_{\varrho_{\bar{T}}}) \cdot \mrm{RKer}^{K(\mc{G}, \bar{T}) \cap \mathscr{D}}(z) \cdot X_{\bar{\mathfrak{n}}, \ostar}^{N(\mc{G}, \bar{T})}(z) \label{appendix:eq-model} \\
&\cdot H_{\mc{J}, \mc{G}, \bar{\mc{G}}}^\determ \Big[ \mrm{RKer}^{K^{\downarrow}(\bar{\mc{G}}) \cap \mathscr{D}} \cdot \mrm{Ker}^{K^{\downarrow}(\bar{\mc{G}}) \backslash \mathscr{D}} X_{\bar{\mathfrak{n}}, \ostar}^{\tilde{N}(\bar{\mc{G}})}\Big](z), \notag 
\end{align}
where $z = x_{\ostar} \sqcup y$. Moreover, 
\begin{equs}
\E\bigg[ \bigg| \Pi^{\determ, \varep}_{x_\ostar}[\bar{T}^{\bar{\mathfrak{n}} \bar{\mathfrak{l}}}](\psi)\bigg|^{2p}\bigg] = M^\determ[\psi, \mc{J}].
\end{equs}
\end{proposition}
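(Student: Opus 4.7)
The plan is to prove the identity \eqref{appendix:eq-model} by induction on the complexity of $\bar{T}^{\bar{\mathfrak{n}}\bar{\mathfrak{l}}}$, extending the explicit computations carried out in Appendix \ref{appendix:modified-model-calculations} for the dipole, the v-tripole, and the line tripole. The guiding observation is that the modified pre-model $\premodel^{\determ,\varep}$ differs from the neutral BPHZ pre-model of \cite{CHS2018} only by the substitution $\xi_s^{\varep} \rightsquigarrow \xi_s^{\determ,\varep} = e^{\icomplex s \beta \determ} \xi_s^{\varep}$ at each leaf. Since the prefactor $e^{\icomplex s \beta \determ(y)}$ is a deterministic pointwise multiplier, it commutes with every algebraic operation that enters into the construction of the model: it passes through products, through convolutions with the singular kernel $K$, and (because $\determ$ is deterministic) pulls out of every expectation used to define $\premodel^{\determ,\varep}$ in \eqref{eq:neutral-bphz-premodel-begin-def}--\eqref{eq:neutral-bphz-premodel-end-def}.

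The inductive step proceeds exactly as in the proof of Lemma~\ref{appendix:lem-premodel-to-model}, using the relation $\model^{\determ,\varep}_{x_\ostar} = \premodel^{\determ,\varep} F^{\determ,\varep}_{x_\ostar}$ together with the recursive rules \eqref{appendix:eq-F-basic}--\eqref{appendix:eq-F-integral} and \eqref{appendix:eq-model-to-f} for the linear maps $F^{\determ,\varep}_{x_\ostar}$. Because those rules do not involve any noise, they are literally unchanged from \cite{CHS2018}; only the leaves of the trees contribute exponential factors. Following the recursion through, every occurrence of a bare noise $\xi^\varep_{\nodelabel(u)}(y_u)$ in the \cite{CHS2018} expression is replaced by $e^{\icomplex \nodelabel(u) \beta \determ(y_u)} \xi^\varep_{\nodelabel(u)}(y_u)$. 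Gathering these deterministic factors over the leaves of the combined forest (both the leaves of $\bar{T}$ and the contracted leaves appearing in the nested $H^\determ$-operators, which by definition \eqref{eq:H-determ} already carry exactly the analogous exponentials) yields the product $\prod_{u \in N(\mc{G}, \bar{T})} e^{\icomplex \beta \nodelabel(u) \determ(y_u)}$ of \eqref{appendix:eq-model}. Every remaining ingredient---the kernel products $\mrm{Ker}^{\cdot}(y)$, the renormalized kernels $\mrm{RKer}^{\cdot}(z)$, the polynomial decorations $X_{\bar{\mathfrak{n}},\ostar}^{\cdot}(z)$, and the $H^\determ$-operators---matches term-for-term with the corresponding factors in \cite[Proposition~3.3]{CHS2018}, so the desired identity follows.

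The moment identity is then deduced from the first identity by taking $p$ copies and $p$ complex-conjugated copies of \eqref{appendix:eq-model} (the conjugation flips the signs of the leaf labels, converting $\mbb{F}_j$ into the combined forest class $\mbb{F}$). Expanding the $2p$-fold product and taking expectations, all exponential factors $e^{\icomplex \beta \nodelabel(u) \determ(y_u)}$ pull out unchanged because $\determ$ is deterministic, while the residual expectation of $\prod_u \xi^\varep_{\nodelabel(u)}(y_u)$ is evaluated by the Gaussian multiplicative chaos cumulant identity of \cite[Proposition~3.3]{CHS2018}. This produces exactly the covariance product $\Jc^{L(\mc{G}, D_{2p})^{(2)}}(y)$ appearing in \eqref{eq:M-determ}. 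Combining this with the $\prod_{u \in N(\mc{G}, D_{2p})} e^{\icomplex \nodelabel(u) \beta \determ(y_u)}$ factors and the (unchanged) kernel and polynomial structure yields precisely $M^\determ[\psi, \mc{J}]$.

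The only real obstacle is bookkeeping: one must verify that the leaf exponential factors indeed collect over the correct index set $N(\mc{G}, \bar{T})$ (and, after taking products and conjugations, over $N(\mc{G}, D_{2p})$), and that the nested $H^\determ$-operators---rather than the unmodified $H$-operators---appear at the correct place in the final formula. This is a purely algebraic exercise with no new analytic input, following the same inductive scheme that produced \eqref{appendix:eq-premodel-to-model-dipole}, \eqref{appendix:eq-premodel-to-model-vtripole}, and \eqref{appendix:eq-premodel-to-model-ltripole-begin}--\eqref{appendix:eq-premodel-to-model-ltripole-end}.
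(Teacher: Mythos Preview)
Your proposal is correct, and the guiding observation you isolate---that the deterministic factors $e^{\icomplex s \beta \determ}$ commute with every algebraic operation in the construction---is exactly the point. Your treatment of the moment identity is also the same as the paper's: it follows from the first identity just as \cite[(3.10)]{CHS2018} follows from \cite[(3.9)]{CHS2018}.

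The route you take for the first identity is, however, somewhat different from the paper's. You propose a general inductive argument that tracks the exponential factors through the recursion of \cite[Proposition~3.3]{CHS2018}, arguing that every ingredient other than the leaf noises is unchanged. The paper instead proceeds by direct computation: in Appendix~\ref{appendix:more-calculations} it defines $\hat{\model}^{\determ,\varep}_{x_\ostar}$ to be the right-hand side of \eqref{appendix:eq-model}, and then in Lemma~\ref{appendix:lem-model-via-recursive} explicitly evaluates $\hat{\model}^{\determ,\varep}_{x_\ostar}$ for three representative trees (the dipole $\scalebox{0.8}{\dipmp}$, the v-tripole $\scalebox{0.8}{\vtripolempp}$, and the line tripole $\scalebox{0.8}{\ltripolepmp}$) by unpacking the sum over $(\mc{G},\mathscr{D})$ and matching the result against the formulas already obtained in Lemma~\ref{appendix:lem-premodel-to-model}. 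The remaining trees are dismissed as ``very similar.'' Your approach is cleaner and more uniform; the paper's approach is more concrete and doubles as an illustration of the rather heavy notation imported from \cite{CHS2018} (which the paper explicitly flags as a pedagogical benefit). Since $\beta^2 < 6\pi$ limits $\mc{T}^-$ to monopoles, dipoles, and tripoles, the case-by-case route is feasible here in a way it would not be for larger $\beta$.
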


Next, note that since $\beta^2 < 6\pi$, we have that $\mrm{Div}$ is a set of dipoles, and thus $|S^0|_{\mathfrak{s}} = 2 - \frac{\beta^2}{2\pi} > -1$ for all $S \in \mrm{Div}$. It follows that $\mathscr{Y}_S = \mathscr{Y}_S^{(0)}$. Thus, if we define $\mathring{H}^{\determ}_{\mc{J}, \mc{F}, S}$ exactly as in \eqref{eq:H-determ}, except we use $\mathscr{Y}_S^{(0)}$ instead of $\mathscr{Y}_S$, then we immediately obtain that $\mathring{H}^{\determ}_{\mc{J}, \mc{F}, S} = H^{\determ}_{\mc{J}, \mc{F}, S}$. This is the analog of \cite[Lemma~3.10]{CHS2018}. Similarly, since every $S \in \mrm{Div}$ is a dipole, we have that for any $\mc{F} \in \mathbb{F}$ and any $S \in \mc{F}$,
\begin{equs}
P^\ptl_{\mc{F}}(S) = \varnothing,
\end{equs}
where $P^\ptl_{\mc{F}}(S)$ is as defined in \cite[Equation (3.19)]{CHS2018}. Thus if we define
\begin{align}
\relax [\bar{H}^{\determ}_{\mc{J}, \mc{F}, S} (\varphi)](x) := &\int_{\tilde{N}_{\mc{F}}(S)} dy \prod_{u \in \tilde{N}_{\mc{F}}(S)} e^{\icomplex \nodelabel(u) \beta \determ(y_u)} \mc{J}^{L_{\mc{F}}(S)^{(2)}}(y \sqcup x_{\varrho_S}) \mrm{Ker}^{\mathring{K}_{\mc{F}}(S)}(y \sqcup x_{\varrho_S}) \notag \\
&\cdot \bar{H}_{\mc{J}, \mc{F}, C_{\mc{F}}(S)}^\determ\Big[ \mc{J}^{P^\ptl_\mc{F}(S)} \mrm{Ker}^{K^\ptl_{\mc{F}}(S)} (-\mathscr{Y}_S \varphi)\Big](x_{\tilde{N}(S)^c} \sqcup y),\label{eq:bar-H-determ}
\end{align}
it immediately follows that $\bar{H}^\determ_{\mc{J}, \mc{F}, S} = H^\determ_{\mc{J}, \mc{F}, S}$, which is the analog of \cite[Lemma 3.11]{CHS2018}. We will use $\bar{H}^\determ$ in the following, to mirror \cite{CHS2018} as close as possible. Next, following \cite[(3.24) and (3.25)]{CHS2018}, we define $\bar{M}^\determ[\psi, \mc{J}]$ by 
\begin{equation}
\bar{M}^\determ[\psi, \mc{J}] := \sum_{\substack{\mc{G} \in \mathbb{F} \\ \mathscr{D} \sse \mathfrak{C} \backslash K(\mc{G})}} \int_{N(\mc{G}, D_{2p})} dy \prod_{u \in N(\mc{G}, D_{2p})} e^{\icomplex \beta \nodelabel(u) \determ(y_u)} \mc{W}^{\determ}[\psi, \mc{G}, \mathscr{D}, \mc{J}](x_{\ostar} \sqcup y),
\end{equation}
where
\begin{equation}
\begin{aligned}
\mc{W}^{\determ}[\psi, \mc{G}, \mathscr{D}, \mc{J}](z) := &\mc{J}^{L(\mc{G}, D_{2p})^{(2)}} \cdot \mrm{Ker}^{K(\mc{G}, D_{2p}) \backslash \mathscr{D}}(z) \\
&\cdot \bigg(\prod_{j=1}^{2p} \psi(z_{\varrho_{\bar{T}_j}})\bigg) \cdot \mrm{RKer}^{K(\mc{G}, D_{2p}) \cap \mathscr{D}}(z) \cdot X^{N(\mc{G}, D_{2p})}_{\bar{\mathfrak{n}}, \ostar}(z) \\
&\cdot \bar{H}^{\determ}_{\mc{J}, \mc{G}, \bar{\mc{G}}}\Big[ \mc{J}^{P^\ptl_{\mc{G}}(D_{2p})} \cdot \mrm{RKer}^{K^{\downarrow}(\bar{\mc{G}}) \cap \mathscr{D}}  \mrm{Ker}^{K^{\downarrow}(\bar{\mc{G}}) \backslash \mathscr{D}} X_{\bar{\mathfrak{n}}, \ostar}^{\tilde{N}(\bar{\mc{G}})} \Big](z).
\end{aligned}
\end{equation}

The next proposition is the analog of \cite[Proposition 3.12]{CHS2018}. 

\begin{proposition}[Analog of Proposition 3.12 of \cite{CHS2018}]
We have that
\begin{equs}
\bar{M}^\determ[\psi, \mc{J}] = M^\determ[\psi, \mc{J}].
\end{equs}
\end{proposition}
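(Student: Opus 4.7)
The plan is to observe that this proposition is essentially a direct corollary of the two structural simplifications already noted in the paragraphs immediately preceding its statement, together with the analogous simplification applied at the ``outermost'' level of the moment expression.

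Recall the two observations that the authors already established (and used to show $\mathring{H}^\determ = H^\determ$ and $\bar{H}^\determ = H^\determ$): (i) since $\beta^2 < 6\pi$, every $S \in \mrm{Div}$ is a dipole with $|S^0|_{\mathfrak{s}} = 2 - \tfrac{\beta^2}{2\pi} > -1$, so $\mathscr{Y}_S = \mathscr{Y}_S^{(0)}$; and (ii) $P^\ptl_{\mc{F}}(S) = \varnothing$ for every $\mc{F} \in \mathbb{F}$ and every $S \in \mc{F}$. From (ii), the factor $\mc{J}^{P^\ptl_{\mc{F}}(S)}$ appearing inside the recursion \eqref{eq:bar-H-determ} is the empty product and thus equals $1$, so comparing \eqref{eq:H-determ} and \eqref{eq:bar-H-determ} term by term gives $\bar{H}^{\determ}_{\mc{J}, \mc{F}, S} = H^{\determ}_{\mc{J}, \mc{F}, S}$, exactly as already noted.

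With this identity in hand, one directly compares the definitions of $M^\determ[\psi, \mc{J}]$ and $\bar{M}^\determ[\psi, \mc{J}]$. After substituting $\bar{H}^\determ = H^\determ$, every factor in the two expressions matches except for the additional $\mc{J}^{P^\ptl_{\mc{G}}(D_{2p})}$ appearing inside the argument of $\bar{H}^{\determ}_{\mc{J}, \mc{G}, \bar{\mc{G}}}$ in $\bar{M}^\determ$. It therefore suffices to verify that $P^\ptl_{\mc{G}}(D_{2p}) = \varnothing$, so that this extra factor is also an empty product equal to $1$. This, in turn, follows from exactly the same reasoning that gave the analogous vanishing for interior $S \in \mc{G}$: the notion $P^\ptl$ only contributes edges associated with subtrees whose homogeneity triggers an extra Taylor subtraction in the renormalization, and when $\beta^2 < 6\pi$ the only divergent subtrees are dipoles, whose homogeneity $2 - \tfrac{\beta^2}{2\pi} \in (-1, 0)$ does not produce any such extra term. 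Since $D_{2p}$ is treated as the outermost ``root forest'' in the $2p$-fold moment computation, the same argument (transcribed in the top-level setting of \cite[Section~3]{CHS2018}) applies verbatim.

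The only point requiring any care is checking that the vanishing $P^\ptl_{\mc{G}}(D_{2p}) = \varnothing$ really does follow from the same dipole-homogeneity consideration; this is not something the authors explicitly state, but it is a straightforward unpacking of the definition of $P^\ptl$ in \cite{CHS2018} under the standing hypothesis $\beta^2 < 6\pi$. Once that is verified, no further computation is needed: the sum-over-forests expressions defining $M^\determ$ and $\bar{M}^\determ$ agree identically as integrands, and hence as integrals, completing the proof.
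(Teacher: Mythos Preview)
Your argument contains a genuine error: the claim that $P^\ptl_{\mc{G}}(D_{2p}) = \varnothing$ is false in general. The vanishing $P^\ptl_{\mc{F}}(S) = \varnothing$ that the paper establishes holds for $S \in \mc{F}$, i.e.\ for the (dipole) trees \emph{inside} the forest; it relies on the fact that a dipole has no room for a strictly smaller divergent subtree. The object $P^\ptl_{\mc{G}}(D_{2p})$ is of a different nature: here $D_{2p}$ is the ambient graph, and $P^\ptl_{\mc{G}}(D_{2p})$ records pairs of noise nodes straddling the boundary of the forest $\mc{G}$ inside $D_{2p}$. This set is typically nonempty (indeed, later in the paper one sees identities such as $P \cup P^\ptl_{b(\mbb{M}_1)}(D_{2p}) = P^\ptl_{b(\mbb{M})}(D_{2p})$ with $P$ a nonempty set of cross-pairs), so your reduction to an empty product does not go through, and the proof as written has a gap.

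The paper's argument handles the extra factor $\mc{J}^{P^\ptl_{\mc{G}}(D_{2p})}$ differently. One does not show that the index set is empty; instead, one shows that after the renormalization operators $\mathscr{Y}_{T_1}\cdots\mathscr{Y}_{T_n}$ act (with $\bar{\mc{G}}=\{T_1,\ldots,T_n\}$), the function $\mc{J}^{P^\ptl_{\mc{G}}(D_{2p})}$ is evaluated at the collapsed configuration $\mrm{Coll}_{\mc{G}}(\cdot)$, and at such points it equals $1$. This uses the multiplicative cancellation built into $\Jc_\varep$, namely $\Jc_\varep \cdot \Jc_\varep^{-}=1$, so that pairs meeting a collapsed dipole contribute cancelling factors. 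In short, the mechanism is a cancellation identity for $\mc{J}$ at collapsed points, not the emptiness of $P^\ptl_{\mc{G}}(D_{2p})$; your argument would need to be reworked along these lines.
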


The proof is exactly the same as for \cite[Proposition 3.12]{CHS2018} (indeed, for us it is even simpler, since in our case we never have nested trees), in that if we let $\bar{\mc{G}} = \{T_1, \ldots, T_n\}$, then one reduces to checking that $\mathscr{Y}_{T_1} \cdots \mathscr{Y}_{T_n}(\mc{J}^{P^\ptl_\mc{G}(D_{2p})}) = 1$. 

\begin{proof}
It suffices to show that for fixed $\mc{G} \in \mathbb{F}$ and $\mathscr{D} \sse \mathfrak{C} \backslash K(\mc{G})$, we have that for any $\varphi$,
\begin{equs}
\bar{H}^{\determ}_{\mc{J}, \mc{G}, \bar{\mc{G}}}\Big[ \mc{J}^{P^\ptl_{\mc{G}}(D_{2p})}\varphi\Big](z) = \bar{H}^{\determ}_{\mc{J}, \mc{G}, \bar{\mc{G}}}\big[\varphi\big](z).
\end{equs}
Suppose that $\mc{G} = \{T_1, \ldots, T_n\}$. Since the only divergent trees for us are dipoles, we have that $\bar{\mc{G}} = \mc{G}$. Writing out the definitions, we have that
\begin{equs}
\relax [\bar{H}^\determ_{\mc{J}, \mc{G}, \mc{G}} \varphi](x) = &\int_{\tilde{N}(\mc{G})} dy \prod_{u \in \tilde{N}(\mc{G})} e^{\icomplex \beta \nodelabel(u) \determ(y_u)} \prod_{i \in [n]} \mc{J}^{L(T_i)^{(2)}}(x_{\varrho_{T_i}} \sqcup y) \mrm{Ker}^{T_i}(x_{\varrho_{T_i}} \sqcup y) \\
&\cdot(-1)^n \big(\mathscr{Y}_{T_1} \circ \cdots \circ \mathscr{Y}_{T_n}(\mc{J}^{P^\ptl_\mc{G}(D_{2p})} \varphi) \big)(x_{\tilde{N}(\mc{G})^c} \sqcup y).
\end{equs}
By the definition of $\mathscr{Y}_T$, we have that
\begin{align*}
&\, \big(\mathscr{Y}_{T_1} \circ \cdots \circ \mathscr{Y}_{T_n}(\mc{J}^{P^\ptl_\mc{G}(D_{2p})} \varphi)\big)(x_{\tilde{N}(\mc{G})^c} \sqcup y) \\
=&\, (\mc{J}^{P^\ptl_\mc{G}(D_{2p})})(\mrm{Coll}_{\mc{G}}( x_{\tilde{N}(\mc{G})^c} \sqcup y)) \varphi(\mrm{Coll}_{\mc{G}}( x_{\tilde{N}(\mc{G})^c} \sqcup y)).
\end{align*}
By the usual cancellation identities involving $\mc{J}$, we have that 
\begin{equs}
\mc{J}^{P^\ptl_\mc{G}(D_{2p})}(\mrm{Coll}_{\mc{G}}( x_{\tilde{N}(\mc{G})^c} \sqcup y)) = 1,
\end{equs}
and the desired result follows.
\end{proof}

Following the notation of \cite[Section 3.4]{CHS2018}, define
\begin{equs}
\mc{W}_\lambda^\determ[\mc{G}, \mathscr{D}, \mc{J}] := \int_{N(\mc{G}, D_{2p})} dy \prod_{u \in N(\mc{G}, D_{2p})} e^{\icomplex \beta \nodelabel(u) \determ(y_u)} \mc{W}^{\determ}[\psi^\lambda_{x_\ostar}, \mc{G}, \mathscr{D}, \mc{J}](x_{\ostar} \sqcup y),
\end{equs}
so that
\begin{equs}
\bar{M}^\determ_\lambda[\mc{J}] := \bar{M}^\determ[\psi^\lambda_{x_\ostar}, \mc{J}] = \sum_{\substack{\mc{G} \in \mathbb{F} \\ \mathscr{D} \sse \mathfrak{C} \backslash K(\mc{G})}} \mc{W}_\lambda^\determ[\mc{G}, \mathscr{D}, \mc{J}].
\end{equs}

Based on the preceding discussion, Proposition \ref{prop:stochastic-estimates} now follows directly from the estimate (the analog of \cite[Equation (3.28)]{CHS2018}) 
\begin{equs}\label{eq:bar-M-lambda-estimate}
|\bar{M}^\determ_\lambda[\mc{J}]| \lesssim \|\mc{J}\| \lambda^{2p|\bar{T}^{\bar{\mathfrak{n}}\bar{\mathfrak{l}}} |_{\mathfrak{s}}},
\end{equs}
whose proof is the content of the remaining discussion.

\begin{remark}
Technically, \eqref{eq:bar-M-lambda-estimate} is not exactly the same as the claimed estimate of Proposition \ref{prop:stochastic-estimates}, as the latter has an additional $\delta$ factor in the exponent of $\lambda$. However, this may be circumvented by taking $\bar{\beta}$ to be closer to $\frac{\beta^2}{4\pi}$ (which decreases the homogeneity $|\bar{T}^{\bar{\mbf{n}} \nodelabel}|_{\mbf{s}}$), and then proving the estimate \eqref{eq:bar-M-lambda-estimate} with this new value of $\bar{\beta}$ (which enters through the homogeneity $|\bar{T}^{\bar{\mbf{n}} \nodelabel}|_{\mbf{s}}$). This is what is implicitly done in \cite[Section 3-5]{CHS2018}, which we are trying to follow as closely as possible.
\end{remark}

Next, we discuss the necessary adjustments to \cite[Section 4]{CHS2018}. Given a scale assignment \mbox{$\mbf{n} \in \N^{\mc{E}}$}, an interval of forests $\mathbb{M}$, $S \in b(\mathbb{M})$, we recursively define
\begin{equation}
\begin{aligned}
\relax [\bar{H}^{\determ, \mbf{n}}_{\mc{J}, \mathbb{M}, S}(\varphi)](x) := &\int_{\tilde{N}_{b(\mathbb{M})}(S)} dy \prod_{u \in \tilde{N}_{b(\mathbb{M})}(S)} e^{\icomplex \beta \nodelabel(u) \determ(y_u)} \mc{J}_{\mbf{n}}^{L_{b(\mbb{M})}(S)^{(2)}}(x_{\varrho_S} \sqcup y) \\
&\hspace{-15ex}\cdot \mrm{Ker}_{\mbf{n}}^{\mathring{K}_{b(\mbb{M})}(S)}(x_{\varrho_S} \sqcup y) \cdot \bar{H}^{\determ, \mbf{n}}_{\mc{J}, \mbb{M}, C_{b(\mbb{M})}(S)}\Big[ \mc{J}_{\mbf{n}}^{P^\ptl_{b(\mbb{M})}(S)} \mrm{Ker}_{\mbf{n}}^{K^{\ptl}_{b(\mbb{M})}(S)} \cdot (\mathscr{Y}_{S, \mbb{M}}^{\#} \varphi) \Big](x \sqcup y),
\end{aligned}
\end{equation}
where the base case $\bar{H}^{\determ, \mbf{n}}_{\Jc, \mbb{M}, \varnothing}$ is defined to be the identity operator. Again, the only difference between the above and the corresponding definition of $\bar{H}^{\mbf{n}}_{\mc{J}, \mbb{M}, S}$ at the beginning of \cite[Section 4]{CHS2018} is the presence of the term $ \prod_{u \in \tilde{N}_{b(\mathbb{M})}(S)} e^{\icomplex \beta \nodelabel(u) \determ(y_u)}$. Similarly, we define $\mc{W}^{\determ, \mbf{n}}_\lambda[\mc{J}, \mbb{M}, \mbb{G}]$ exactly as in \cite[Equation (4.3)]{CHS2018}, except the integrand has an extra factor $\prod_{u \in N(b(\mbb{M}), D_{2p})} e^{\icomplex \beta \nodelabel(u) \determ(y_u)}$, and we use $\bar{H}^{\determ, \mbf{n}}_{\mc{J}, \mbb{M}, \ovl{b(\mbb{M})}}$ in place of $\bar{H}^{\mbf{n}}_{\mc{J}, \mbb{M}, \ovl{b(\mbb{M})}}$. We next state the analog of \cite[Lemma 4.2]{CHS2018}, which has the exact same proof as the cited lemma.

\begin{lemma}[Analog of Lemma 4.2 of \cite{CHS2018}]
We have that
\begin{equs}
\sum_{\mbf{n} \in \N^{\mc{E}}} \mc{W}^{\determ, \mbf{n}}_\lambda[\mc{J}, \{\mc{F}\}, \{\mathscr{C}\}] = \mc{W}_\lambda^\determ[\mc{J}, \mc{F}, \mathscr{C}].
\end{equs}
For fixed $\mbf{n} \in \N^{\mc{E}}$, interval of forests $\mbb{M}$, and interval of cut sets $\mbb{G}$ with $b(\mbb{G}) \sse \mathfrak{C} \backslash K(b(\mbb{M}))$, we have that
\begin{equs}
\sum_{\substack{\mc{F} \in \mbb{M} \\ \mathscr{C} \in \mbb{G}}} \mc{W}^{\determ, \mbf{n}}_\lambda[\mc{J}, \{\mc{F}\}, \{\mathscr{C}\}] = \mc{W}^{\determ, \mbf{n}}_\lambda[\mc{J}, \mbb{M}, \mbb{G}].
\end{equs}
\end{lemma}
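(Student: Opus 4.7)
The plan is to observe that our $\determ$-dependent modifications are entirely inert with respect to the summations appearing in both identities, so that the proof of \cite[Lemma 4.2]{CHS2018} transfers essentially verbatim.

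Comparing $\bar{H}^{\determ,\mbf{n}}_{\mc{J},\mbb{M},S}$ and $\mc{W}^{\determ,\mbf{n}}_\lambda$ with their CHS2018 counterparts, the sole new feature at each level of the recursion is an integrand factor of the form $\prod_{u} e^{\icomplex\beta\nodelabel(u)\determ(y_u)}$. This factor depends only on the integration variables $y_u$ (together with the fixed node labels $\nodelabel$ and the fixed function $\determ$); in particular, it is independent of the scale assignment $\mbf{n}\in\N^{\mc{E}}$, of the choice of forest $\mc{F}\in\mbb{M}$, and of the choice of cut set $\mathscr{C}\in\mbb{G}$. Moreover, in the definition of $\bar{H}^{\determ,\mbf{n}}_{\mc{J},\mbb{M},S}$ the $\determ$-factors indexed by $u\in\tilde{N}_{b(\mbb{M})}(S)$ sit \emph{outside} of the truncation operator $\mathscr{Y}^{\#}_{S,\mbb{M}}$, which acts only on its spatial-function argument inside the recursive call on $C_{b(\mbb{M})}(S)$.

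For the first identity, the strategy is to substitute the edgewise decompositions $\mc{J}=\sum_{\mbf{n}}\mc{J}_\mbf{n}$ and $\mrm{Ker}=\sum_{\mbf{n}}\mrm{Ker}_\mbf{n}$ from \cite[Section 4]{CHS2018} into the definitions of $\mc{W}_\lambda^\determ[\mc{J},\mc{F},\mathscr{C}]$ and of $\bar{H}^{\determ}_{\mc{J},\mc{F},\bar{\mc{F}}}$, use Fubini to interchange the $\mbf{n}$-summation with the spatial integrations at every level of the recursion, and then pull the $\mbf{n}$-independent $\determ$-factors through. The resulting expression is precisely $\sum_{\mbf{n}\in\N^{\mc{E}}}\mc{W}^{\determ,\mbf{n}}_\lambda[\mc{J},\{\mc{F}\},\{\mathscr{C}\}]$, which is the claim.

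For the second identity, \cite[Lemma 4.2]{CHS2018} expands the interval-of-forests and interval-of-cut-sets structures by linearity into a sum of contributions indexed by individual pairs $(\mc{F},\mathscr{C})\in\mbb{M}\times\mbb{G}$. Since each $\determ$-factor is pinned to a specific node of the underlying tree and does not depend on the choice of $(\mc{F},\mathscr{C})$, it commutes freely with this expansion. The main conceivable obstacle -- that the collapse action of $\mathscr{Y}^{\#}_{S,\mbb{M}}$ could interact non-trivially with a $\determ$-factor -- is ruled out by the placement noted above: the $\determ$-factors introduced at the current recursion level are attached to nodes in $\tilde{N}_{b(\mbb{M})}(S)$, while $\mathscr{Y}^{\#}_{S,\mbb{M}}$ acts inside the recursive call on $C_{b(\mbb{M})}(S)$, which involves a disjoint set of nodes (and $\mathscr{Y}^{\#}_{S,\mbb{M}}$ only collapses spatial variables within its argument anyway). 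Hence no interaction occurs, and the combinatorial bookkeeping of \cite[Lemma 4.2]{CHS2018} carries through without change.
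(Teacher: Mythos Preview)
Your proposal is correct and takes essentially the same approach as the paper: both argue that the $\determ$-dependent exponential factors are attached to fixed nodes and are independent of $\mbf{n}$, $\mc{F}$, and $\mathscr{C}$, so the proof of \cite[Lemma~4.2]{CHS2018} carries over. The paper makes this slightly more explicit for the second identity by writing out the induction on $|\delta(\mbb{M})|$ and verifying directly that when one splits $\mbb{M}=\mbb{M}_1\cup\mbb{M}_2$ according to whether a chosen $T\in\delta(\mbb{M})$ lies in the forest, the $\determ$-factors indexed by $\tilde{N}(T)$ migrate correctly between the outer product over $N(b(\mbb{M}),D_{2p})$ and the recursive $\bar{H}^{\determ,\mbf{n}}$ operator; but this is precisely the bookkeeping you defer to \cite{CHS2018}.
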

\begin{proof}
The first identity follows by linearity, in exactly the same way as \cite[Equation (4.4)]{CHS2018}. Thus, we focus on the second identity. We first show that for any $\mbb{M}$, we have that
\begin{equs}
\sum_{\mc{F} \in \mbb{M}} \mc{W}^{\determ, \mbf{n}}_\lambda[\mc{J}, \{\mc{F}\}, \varnothing] = \mc{W}^{\determ, \mbf{n}}_\lambda[\mc{J}, \mbb{M}, \varnothing].
\end{equs}
To see this, we induct on the size of $\delta(\mbb{M})$. The base case $\delta(\mbb{M}) = \varnothing$ follows by definition. Next, fix $\revision{\ell} > 0$, and suppose that the claim has been shown for all $\mbb{M}$ such that $|\delta(\mbb{M})| < \ell$. Fix $\mbb{M}$ with $\delta(\mbb{M}) = \revision{\ell}$. Let $T \in \delta(\mbb{M})$, and define the two intervals of forests $\mbb{M}_1 := \{\mc{F} \in \mbb{M} : \mc{F} \not\ni T\}$, $\mbb{M}_2 = \{\mc{F} \in \mbb{M} : \mc{F} \ni T\}$. Note that we may write
\begin{equs}
\mbb{M}_1 = [s(\mbb{M}), b(\mbb{M}) \backslash \{T\}], ~~ \mbb{M}_2 = [s(\mbb{M}) \cup \{T\}, b(\mbb{M})].
\end{equs}
Consequently,
\begin{equs}
s(\mbb{M}_1) &= s(\mbb{M}), \quad \quad\quad  b(\mbb{M}_1) = b(\mbb{M}) \backslash \{T\}, \\
s(\mbb{M}_2) &= s(\mbb{M}) \cup \{T\}, \quad b(\mbb{M}_2) = b(\mbb{M}), \\
\delta(\mbb{M}_1) &= \delta(\mbb{M}_2) = \delta(\mbb{M}) \backslash \{T\}.
\end{equs}
Note also that $\mbb{M} = \mbb{M}_1 \cup \mbb{M}_2$. Applying the inductive assumption to $\mbb{M}_1, \mbb{M}_2$, it suffices to show that
\begin{equs}\label{eq:W-M-decomp}
\mc{W}^{\determ, \mbf{n}}_\lambda[\mc{J}, \mbb{M}, \varnothing] = \mc{W}^{\determ, \mbf{n}}_\lambda[\mc{J}, \mbb{M}_1, \varnothing] + \mc{W}^{\determ, \mbf{n}}_\lambda[\mc{J}, \mbb{M}_2, \varnothing].
\end{equs}
By definition, we have that
\begin{equs}
\mc{W}^{\determ, \mbf{n}}_\lambda[\mc{J}, \mbb{M}, \varnothing] = &\int_{N(b(\mbb{M}), D_{2p}) \sqcup \{\ostar\}} \dy ~ \delta(y_{\ostar}) \bigg(\prod_{u \in N(b(\mbb{M}), D_{2p})} e^{\icomplex \beta \nodelabel(u) \determ(y_u)}\bigg) X(y) J(y) K(y) \\
&\cdot\bar{H}^{\determ, \mbf{n}}_{\mc{J}, \mbb{M}, \ovl{b(\mbb{M})}}\Big[ \mc{J}_{\mbf{n}}^{P^\ptl_{b(\mbb{M})}(D_{2p})} \mrm{Ker}_{\mbf{n}}^{K^\downarrow(\ovl{b(\mbb{M})})} X^{\tilde{N}(\ovl{b(\mbb{M})})}_{\bar{\mathfrak{n}}, \ostar, \mbf{n}} \Big](y).
\end{equs}
Here,
\begin{equs}
X(y) := X^{N(b(\mbb{M}), D_{2p})}_{\bar{\mathfrak{n}}, \ostar, \mbf{n}} (y), ~
J(y) := \mc{J}_{\mbf{n}}^{L(b(\mbb{M}), D_{2p})^{(2)}}(y), ~~ \text{and} ~~
K(y) := \mrm{Ker}_{\mbf{n}}^{K(b(\mbb{M}), D_{2p})}(y).
\end{equs}
\revision{We remark that starting from here, our assumption that $\beta^2 < 6\pi$ results in simplifications as compared to the general proof of \cite[Lemma 4.2]{CHS2018}. Because we take advantage of these simplifications, our proof begins to diverge from that of \cite{CHS2018}. In particular,} note that for us, every element of $b(\mbb{M})$ is maximal, and thus $\ovl{b(\mbb{M})} = b(\mbb{M})$. By the inductive definition of $\bar{H}^{\determ, \mbf{n}}_{\mc{J}, \mbb{M}, b(\mbb{M})}$, we have that
\begin{align*}
\relax [\bar{H}^{\determ, \mbf{n}}_{\mc{J}, \mbb{M}, b(\mbb{M})} (\varphi)](y) = &\int_{\tilde{N}(T)} dy_{\tilde{N}(T)} \prod_{u \in \tilde{N}(T)} \bigg(e^{\icomplex \beta \nodelabel(u) \determ(y_u)}\bigg) \mc{J}_{\mbf{n}}^{L(T)^{(2)}}(y) \mrm{Ker}_{\mbf{n}}^{\mathring{K}_{b(\mbb{M})}(T)}(y) \allowdisplaybreaks[0]\\
&\cdot \bar{H}^{\determ, \mbf{n}}_{\mc{J}, \mbb{M}, b(\mbb{M}) \backslash \{T\}}\Big[ (\mrm{Id} - \mathscr{Y}_T) \varphi\Big](y) .
\end{align*}
Here, we use that $P^\ptl_{b(\mbb{M})}(S) = K^\ptl_{b(\mbb{M})}(S) = \varnothing$ in the case of dipoles and tripoles. We are also being a bit loose with the variable $y$; on the LHS, $y$ is indexed by nodes not in $\tilde{N}(T)$, while in the RHS, we integrate over the non-root vertices of $T$ (which for us is just a single vertex since $T$ is always a dipole), and the $y$ appearing as inputs to functions on the RHS is the concatenation of the input $y$ from the LHS and the integration variables.

We may split the above into two terms $H_1(\varphi) + H_2(\varphi)$, where 
\begin{align*}
&\, \relax [H_1(\varphi)](y) \\:=&\, \int_{\tilde{N}(T)} \hspace{-1ex} dy_{\tilde{N}(T)} \prod_{u \in \tilde{N}(T)} \hspace{-1ex}  \bigg(e^{\icomplex \beta \nodelabel(u) \determ(y_u)}\bigg) \mc{J}_{\mbf{n}}^{L(T)^{(2)}}(y) \mrm{Ker}_{\mbf{n}}^{\mathring{K}_{b(\mbb{M})}(T)}(y) \bar{H}^{\determ, \mbf{n}}_{\mc{J}, \mbb{M}, b(\mbb{M}) \backslash \{T\}}[\varphi](y) 
\end{align*}
and 
\begin{align*}
&\, \relax [H_2(\varphi)](y) \\
:=&\, \int_{\tilde{N}(T)}  \hspace{-1ex}  dy_{\tilde{N}(T)} \prod_{u \in \tilde{N}(T)} \hspace{-1ex}  \bigg(e^{\icomplex \beta \nodelabel(u) \determ(y_u)}\bigg) \mc{J}_{\mbf{n}}^{L(T)^{(2)}}(y) \mrm{Ker}_{\mbf{n}}^{\mathring{K}_{b(\mbb{M})}(T)}(y) \bar{H}^{\determ, \mbf{n}}_{\mc{J}, \mbb{M}, b(\mbb{M}) \backslash \{T\}}[ -\mathscr{Y}_T \varphi ](y).  
\end{align*}
To show the identity \eqref{eq:W-M-decomp}, it suffices to show that
\begin{align}
\mc{W}^{\determ, \mbf{n}}_\lambda[\mc{J}, \mbb{M}_1, \varnothing] = &\int_{N(b(\mbb{M}), D_{2p}) \sqcup \{\ostar\}} \dy ~ \delta(y_{\ostar}) \bigg(\prod_{u \in N(b(\mbb{M}), D_{2p})} e^{\icomplex \beta \nodelabel(u) \determ(y_u)}\bigg) X(y) J(y) K(y) \notag \\
&\cdot H_1\Big[\mc{J}_{\mbf{n}}^{P^\ptl_{b(\mbb{M})}(D_{2p})} \mrm{Ker}_{\mbf{n}}^{K^\downarrow(\ovl{b(\mbb{M})})} X^{\tilde{N}(\ovl{b(\mbb{M})})}_{\bar{\mathfrak{n}}, \ostar, \mbf{n}} \Big](y),
\label{eq:M-1-identity}
\end{align}
and
\begin{equs}
\mc{W}^{\determ, \mbf{n}}_\lambda[\mc{J}, \mbb{M}_2, \varnothing] = &\int_{N(b(\mbb{M}), D_{2p}) \sqcup \{\ostar\}} \dy ~ \delta(y_{\ostar}) \bigg(\prod_{u \in N(b(\mbb{M}), D_{2p})} e^{\icomplex \beta \nodelabel(u) \determ(y_u)}\bigg) X(y) J(y) K(y) \\
&\cdot H_2\Big[\mc{J}_{\mbf{n}}^{P^\ptl_{b(\mbb{M})}(D_{2p})} \mrm{Ker}_{\mbf{n}}^{K^\downarrow(\ovl{b(\mbb{M})})} X^{\tilde{N}(\ovl{b(\mbb{M})})}_{\bar{\mathfrak{n}}, \ostar, \mbf{n}} \Big](y).
\end{equs}
We first show the identity for $\mbb{M}_2$, since it has a shorter proof. Since $T \in s(\mbb{M}_2)$, by definiton of $\bar{H}^{\determ, \mbf{n}}$, we have that (recalling that $b(\mbb{M}_2) = b(\mbb{M})$)
\begin{equs}
H_2(\varphi) = \bar{H}^{\determ, \mbf{n}}_{\mc{J}, \mbb{M}_2, b(\mbb{M}_2)}(\varphi).
\end{equs}
The desired identity for $\mbb{M}_2$ now follows by combining this with the definition of $\mc{W}^{\determ, \mbf{n}}_\lambda[\mc{J}, \mbb{M}_2, \varnothing]$, and the fact that $b(\mbb{M}_2) = b(\mbb{M})$.

In the case of $\mbb{M}_1$ (i.e., we now want to show the identity \eqref{eq:M-1-identity}), note that since $b(\mbb{M}_1) = b(\mbb{M}) \backslash \{T\}$, we have that $N(b(\mbb{M}_1), D_{2p}) = N(b(\mbb{M}), D_{2p}) \cup \tilde{N}(T)$. We may thus write the RHS of \eqref{eq:M-1-identity}
\begin{align*}
&\int_{N(b(\mbb{M}_1), D_{2p}) \cup \{\ostar\}} \dy \delta(y_{\ostar}) \bigg(\prod_{u \in N(b(\mbb{M}_1), D_{2p})} e^{\icomplex \beta \nodelabel(u) \determ(y_u)}\bigg) X(y) J(y) K(y) \\
&\hspace{5ex}\cdot \mc{J}_{\mbf{n}}^{L(T)^{(2)}}(y) \mrm{Ker}_{\mbf{n}}^{\mathring{K}_{b(\mbb{M})}(T)}(y) \bar{H}^{\determ, \mbf{n}}_{\mc{J}, \mbb{M}, b(\mbb{M}) \backslash \{T\}}\Big[\mc{J}_{\mbf{n}}^{P^\ptl_{b(\mbb{M})}(D_{2p})} \mrm{Ker}_{\mbf{n}}^{K^\downarrow(\ovl{b(\mbb{M})})} X^{\tilde{N}(\ovl{b(\mbb{M})})}_{\bar{\mathfrak{n}}, \ostar, \mbf{n}}\Big](y).
\end{align*}
From the definition of $\mc{W}^\determ_\lambda[\mc{J}, \mbb{M}_1, \varnothing]$, we want to show that this is equal to
\begin{equs}\label{eq:W-M-1-formula}
\int_{N(b(\mbb{M}_1), D_{2p}) \cup \{\ostar\}} &\dy \delta(y_{\ostar}) \bigg(\prod_{u \in N(b(\mbb{M}_1), D_{2p})} e^{\icomplex \beta \nodelabel(u) \determ(y_u)}\bigg) X_1(y) J_1(y) K_1(y) \\
&\cdot\bar{H}^{\determ, \mbf{n}}_{\mc{J}, \mbb{M}_1, b(\mbb{M}_1)}\Big[\mc{J}_{\mbf{n}}^{P^\ptl_{b(\mbb{M}_1)}(D_{2p})} \mrm{Ker}_{\mbf{n}}^{K^\downarrow(\ovl{b(\mbb{M}_1)})} X^{\tilde{N}(\ovl{b(\mbb{M}_1)})}_{\bar{\mathfrak{n}}, \ostar, \mbf{n}}\Big](y),
\end{equs}
where
\begin{equs}
X_1(y) &:= X^{N(b(\mbb{M}_1), D_{2p})}_{\bar{\mathfrak{n}}, \ostar, \mbf{n}} (y), \\
J_1(y) &:= \mc{J}_{\mbf{n}}^{L(b(\mbb{M}_1), D_{2p})^{(2)}}(y), \\
K_1(y) &:= \mrm{Ker}_{\mbf{n}}^{K(b(\mbb{M}_1), D_{2p})}.
\end{equs}
This identity will be shown by shifting some of the factors outside of $\bar{H}^{\determ, \mbf{n}}_{\mc{J}, \mbb{M}_1, b(\mbb{M}_1)}$ into the argument of this operator. Towards this end, note that since $N(b(\mbb{M}_1), D_{2p}) = N(b(\mbb{M}), D_{2p}) \cup \tilde{N}(T)$, we have that
\begin{equs}
X_1 = X \cdot X^{\tilde{N}(T)}_{\bar{\mathfrak{n}}, \ostar, \mbf{n}}.
\end{equs}
Similarly, we have that $L(b(\mbb{M}_1), D_{2p})^{(2)} = L(b(\mbb{M}), D_{2p})^{(2)} \cup L(T)^{(2)} \cup P$, where $P$ is the set of all pairs of nodes such that one node is in $T$ and the other node is in $D_{2p} \backslash b(\mbb{M})$. Note that $L(T)^{(2)} \cup P$ is the set of pairs of nodes which involve at least one node of $T$, and no nodes of $b(\mbb{M}) \backslash \{T\}$. Thus,
\begin{equs}
J_1 = J \cdot \mc{J}_{\mbf{n}}^{L(T)^{(2)}} \cdot \mc{J}_{\mbf{n}}^P.
\end{equs}
Next, we have that
\begin{equs}
&\, K(b(\mbb{M}_1), D_{2p}) := K(D_{2p}) \backslash \bar{K}^{\downarrow}(b(\mbb{M}_1)) \\
=&\, \big(K(D_{2p}) \backslash \bar{K}^{\downarrow}(b(\mbb{M}))\big) \cup \bar{K}^{\downarrow}(T) = K(b(\mbb{M}), D_{2p}) \cup \bar{K}^{\downarrow}(T).
\end{equs}
Moreover, $\bar{K}^{\downarrow}(T) = K(T) \sqcup K^{\downarrow}(T)$. Thus
\begin{equs}
K_1 = K \cdot \mrm{Ker}_{\mbf{n}}^{K(T)} \cdot \mrm{Ker}_{\mbf{n}}^{{K}^{\downarrow}(T)}.
\end{equs}
We also note that in our case, $\mathring{K}_{b(\mbb{M})}(T) = K(T)$. Inserting these identities into \eqref{eq:W-M-1-formula}, we obtain
\begin{align*}
&\hspace{-2ex}\int_{N(b(\mbb{M}_1), D_{2p}) \cup \{\ostar\}} \dy \delta(y_{\ostar}) \bigg(\prod_{u \in N(b(\mbb{M}_1), D_{2p})} e^{\icomplex \beta \nodelabel(u) \determ(y_u)}\bigg) \\
& X(y) J(y) K(y) \mc{J}_n^{L(T)^{(2)}}(y) \mrm{Ker}_{\mbf{n}}^{\mathring{K}_{b(\mbb{M})}(T)}(y)  \\
& X^{\tilde{N}(T)}_{\bar{\mathfrak{n}}, \ostar, \mbf{n}}(y) \mc{J}_{\mbf{n}}^P(y) \mrm{Ker}_{\mbf{n}}^{{K}^{\downarrow}(T)}(y) \cdot\bar{H}^{\determ, \mbf{n}}_{\mc{J}, \mbb{M}_1, b(\mbb{M}_1)}\Big[\mc{J}_{\mbf{n}}^{P^\ptl_{b(\mbb{M}_1)}(D_{2p})} \mrm{Ker}_{\mbf{n}}^{K^\downarrow(\ovl{b(\mbb{M}_1)})} X^{\tilde{N}(\ovl{b(\mbb{M}_1)})}_{\bar{\mathfrak{n}}, \ostar, \mbf{n}}\Big](y).
\end{align*}
To finish the proof of the identity \eqref{eq:M-1-identity}, we claim that
\begin{equs}
X^{\tilde{N}(T)}_{\bar{\mathfrak{n}}, \ostar, \mbf{n}}(y)& \mc{J}_{\mbf{n}}^P(y) \mrm{Ker}_{\mbf{n}}^{{K}^{\downarrow}(T)}(y) \cdot\bar{H}^{\determ, \mbf{n}}_{\mc{J}, \mbb{M}_1, b(\mbb{M}_1)}\Big[\mc{J}_{\mbf{n}}^{P^\ptl_{b(\mbb{M}_1)}(D_{2p})} \mrm{Ker}_{\mbf{n}}^{K^\downarrow(\ovl{b(\mbb{M}_1)})} X^{\tilde{N}(\ovl{b(\mbb{M}_1)})}_{\bar{\mathfrak{n}}, \ostar, \mbf{n}}\Big](y) \\
&= \bar{H}^{\determ, \mbf{n}}_{\mc{J}, \mbb{M}, b(\mbb{M}) \backslash \{T\}}\Big[\mc{J}_{\mbf{n}}^{P^\ptl_{b(\mbb{M})}(D_{2p})} \mrm{Ker}_{\mbf{n}}^{K^\downarrow(\ovl{b(\mbb{M})})} X^{\tilde{N}(\ovl{b(\mbb{M})})}_{\bar{\mathfrak{n}}, \ostar, \mbf{n}}\Big](y).
\end{equs}
Towards this end, we first note that 
\begin{equs}
\bar{H}^{\determ, \mbf{n}}_{\mc{J}, \mbb{M}_1, b(\mbb{M}_1)} = \bar{H}^{\determ, \mbf{n}}_{\mc{J}, \mbb{M}, b(\mbb{M}) \backslash \{T\}}.
\end{equs}
Next, observe that $\tilde{N}(T) \cup \tilde{N}(\ovl{b(\mbb{M}_1)}) = \tilde{N}(\ovl{b(\mbb{M})})$, $K^{\downarrow}(T) \cup K^{\downarrow}(\ovl{b(\mbb{M}_1)}) = K^{\downarrow}(\ovl{b(\mbb{M})})$, and \mbox{$P \cup P^\ptl_{b(\mbb{M}_1)}(D_{2p}) = P^\ptl_{b(\mbb{M})}(D_{2p})$}. Thus,
\begin{equs}
X^{\tilde{N}(T)}_{\bar{\mathfrak{n}}, \ostar, \mbf{n}}(y)& \mc{J}_{\mbf{n}}^P(y) \mrm{Ker}_{\mbf{n}}^{{K}^{\downarrow}(T)}(y) \cdot \mc{J}_{\mbf{n}}^{P^\ptl_{b(\mbb{M}_1)}(D_{2p})} \mrm{Ker}_{\mbf{n}}^{K^\downarrow(\ovl{b(\mbb{M}_1)})} X^{\tilde{N}(\ovl{b(\mbb{M}_1)})}_{\bar{\mathfrak{n}}, \ostar, \mbf{n}} \\
&=\mc{J}_{\mbf{n}}^{P^\ptl_{b(\mbb{M})}(D_{2p})} \mrm{Ker}_{\mbf{n}}^{K^\downarrow(\ovl{b(\mbb{M})})} X^{\tilde{N}(\ovl{b(\mbb{M})})}_{\bar{\mathfrak{n}}, \ostar, \mbf{n}}.
\end{equs}
Finally, to see why we may take $X^{\tilde{N}(T)}_{\bar{\mathfrak{n}}, \ostar, \mbf{n}}(y) \mc{J}_{\mbf{n}}^P(y) \mrm{Ker}_{\mbf{n}}^{{K}^{\downarrow}(T)}(y)$ inside the argument of $\bar{H}^{\determ, \mbf{n}}_{\mc{J}, \mbb{M}, b(\mbb{M}) \backslash \{T\}}$, note that $\bar{H}^{\determ, \mbf{n}}_{\mc{J}, \mbb{M}, b(\mbb{M}) \backslash \{T\}}$ integrates its input over variables indexed by the nodes $\tilde{N}(b(\mbb{M}) \backslash \{T\})$, while the term $X^{\tilde{N}(T)}_{\bar{\mathfrak{n}}, \ostar, \mbf{n}}(y) \mc{J}_{\mbf{n}}^P(y) \mrm{Ker}_{\mbf{n}}^{{K}^{\downarrow}(T)}(y)$ only involves variables indexed by the nodes in $N(T) \cup (D_{2p} \backslash b(\mbb{M})) \cup \{\ostar\}$, which is disjoint from the former set. Thus, $X^{\tilde{N}(T)}_{\bar{\mathfrak{n}}, \ostar, \mbf{n}}(y) \mc{J}_{\mbf{n}}^P(y) \mrm{Ker}_{\mbf{n}}^{{K}^{\downarrow}(T)}(y)$ may be treated as a constant, and thus taken inside the input of the renormalization operator. This completes the proof of \eqref{eq:M-1-identity}.

Next, we show that for any cut set $\mathscr{C}$ such that $\mathscr{C} \sse \mathfrak{C} \backslash K(b(\mbb{M}))$, we have that
\begin{equs}
\sum_{\mc{F} \in \mbb{M}} \mc{W}^{\determ, \mbf{n}}_\lambda[\mc{J}, \{\mc{F}\}, \{\mathscr{C}\}] = \mc{W}^{\determ, \mbf{n}}_\lambda[\mc{J}, \mbb{M}, \{\mathscr{C}\}].
\end{equs}
This is shown by a slight extension of the previous argument, since now there are additional terms in the formula for $\mc{W}^{\mbf{n}}_\lambda$. One proceeds by the same inductive argument, except now the definition of $K$ is modified to
\begin{equs}
K := \mrm{Ker}_{\mbf{n}}^{K(b(\mbb{M}), D_{2p}) \backslash \mathscr{C}},
\end{equs}
and there is an additional factor $R$ defined by
\begin{equs}
R := \mrm{RKer}_{\mbf{n}}^{\mathscr{C} \backslash K^{\downarrow}(b(\mbb{M}))}.
\end{equs}
Additionally, the input to the renormalization operator is modified to
\begin{equs}
\mc{J}_{\mbf{n}}^{P^\ptl_{b(\mbb{M})}(D_{2p})}  \mrm{RKer}_{\mbf{n}}^{\mathscr{C} \cap K^\downarrow(\ovl{b(\mbb{M})}) } \mrm{Ker}_{\mbf{n}}^{K^\downarrow(\ovl{b(\mbb{M})}) \backslash \mathscr{C}} X^{\tilde{N}(\ovl{b(\mbb{M})})}_{\bar{\mathfrak{n}}, \ostar, \mbf{n}}.
\end{equs}
As before, we partition $\mbb{M} = \mbb{M}_1 \cup \mbb{M}_2$, and want to show
\begin{equs}
\mc{W}^{\determ, \mbf{n}}_\lambda[\mc{J}, \mbb{M}, \{\mathscr{C}\}] = \mc{W}^{\determ, \mbf{n}}_\lambda[\mc{J}, \mbb{M}_1, \{\mathscr{C}\}] + \mc{W}^{\determ, \mbf{n}}_\lambda[\mc{J}, \mbb{M}_2, \{\mathscr{C}\}].
\end{equs}
We then reduce to proving two separate identities, one each for $\mbb{M}_1, \mbb{M}_2$. The proof for $\mbb{M}_2$ is short as before, while the proof for $\mbb{M}_1$ involves some reorganizations. We omit the details.

Finally, to show the full result, it remains to show that for an interval of cutsets $\mbb{G}$ with $b(\mbb{G}) \sse \mathfrak{C} \backslash K(b(\mbb{M}))$, we have that
\begin{equs}
\sum_{\mathscr{C} \in \mbb{G}} \mc{W}^{\determ, \mbf{n}}_\lambda[\mc{J}, \mbb{M}, \{\mathscr{C}\}] = \mc{W}^{\determ, \mbf{n}}_\lambda[\mc{J}, \mbb{M}, \mbb{G}].
\end{equs}
This follows since $\widehat{\mrm{RKer}}_{\mbf{n}}^e = \mrm{Ker}_{\mbf{n}}^e + \mrm{RKer}_{\mbf{n}}^e$ and expanding out
\begin{equs}
\widehat{\mrm{RKer}}_{\mbf{n}}^{\delta(\mbb{G}) \backslash K^{\downarrow}(b(\mbb{M}))} = \sum_{\mathscr{C}_1 \sse \delta(\mbb{G}) \backslash K^{\downarrow}(b(\mbb{M}))} \mrm{Ker}_{\mbf{n}}^{\mathscr{C}_1} \cdot \mrm{RKer}_{\mbf{n}}^{\delta(\mbb{G}) \backslash (K^{\downarrow}(b(\mbb{M})) \cup \mathscr{C}_1)},
\end{equs}
and similarly
\begin{equs}
\widehat{\mrm{RKer}}_{\mbf{n}}^{\delta(\mbb{G}) \cap K^{\downarrow}(\ovl{b(\mbb{M}))}} = \sum_{\mathscr{C}_2 \sse \delta(\mbb{G}) \cap K^{\downarrow}(\ovl{b(\mbb{M}))}} \mrm{Ker}_{\mbf{n}}^{\mathscr{C}_2} \cdot \mrm{RKer}_{\mbf{n}}^{(\delta(\mbb{G}) \cap K^{\downarrow}\ovl{b(\mbb{M})}) \backslash \mathscr{C}_2}.
\end{equs}
Thus we obtain that $\mc{W}^{\determ, \mbf{n}}_\lambda[\mc{J}, \mbb{M}, \mbb{G}]$ is a sum over $\mathscr{C}_1, \mathscr{C}_2$, which we may alternatively view as a sum over $\mathscr{C}' = \mathscr{C}_1 \cup \mathscr{C}_2 \sse \delta(\mbb{G})$. We may further view this as a sum over $\mathscr{C} = s(\mbb{G}) \cup \mathscr{C}' \in [s(\mbb{G}), b(\mbb{G})] = \mbb{G}$. The desired identity then follows by the definition of $\mc{W}^{\mbf{n}}_\lambda[\mc{J}, \mbb{M}, \{\mathscr{C}\}]$.
\end{proof}

The content of \cite[Sections 4.2.1 and 4.2.2]{CHS2018} before Corollary 4.7 is entirely unchanged. The following corollary is the analog of \cite[Corollary 4.7]{CHS2018}, and the proof is exactly the same.

\begin{corollary}[Analog of Corollary 4.7 of \cite{CHS2018}]
We have that
\begin{equs}
\sum_{\substack{\mc{F} \in \mbb{F} \\ \mathscr{C} \sse \mathfrak{C} \backslash K(\mc{F})}} \mc{W}_\lambda^{\determ}[\mc{J}, \mc{F}, \mathscr{C}] &= \sum_{\mbf{n} \in \N^{\mc{E}}} \sum_{\substack{\mbb{M} \in \mathfrak{M}^{\mbf{n}} \\ \mbb{G} \in \mathfrak{G}^{\mbf{n}}(\mbb{M})}} \mc{W}^{\determ, \mbf{n}}_\lambda[\mc{J}, \mbb{M}, \mbb{G}] \\
&= \sum_{(\mbb{M}, \mbb{G}) \in \mathfrak{R}} \sum_{\mbf{n} \in \mc{N}_{\mbb{M}, \mbb{G}, \lambda}} \mc{W}^{\determ, \mbf{n}}_\lambda[\mc{J}, \mbb{M}, \mbb{G}].
\end{equs}
\end{corollary}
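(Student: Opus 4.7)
The plan is to mirror the proof of \cite[Corollary 4.7]{CHS2018} verbatim, since the modifications we have made (namely the insertion of the factors $\prod_{u} e^{\icomplex \beta \nodelabel(u) \determ(y_u)}$ in the definitions of $\bar{H}^{\determ, \mbf{n}}_{\mc{J}, \mbb{M}, S}$ and $\mc{W}^{\determ, \mbf{n}}_\lambda$) are entirely linear and transparent to the combinatorial rearrangements involved. The two identities of the preceding lemma, which already incorporate these modifications, are the only analytic ingredients needed; everything else is bookkeeping that is unchanged from \cite{CHS2018}.

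For the first equality, I would start by substituting the first identity of the preceding lemma, $\mc{W}_\lambda^\determ[\mc{J}, \mc{F}, \mathscr{C}] = \sum_{\mbf{n} \in \N^{\mc{E}}} \mc{W}^{\determ, \mbf{n}}_\lambda[\mc{J}, \{\mc{F}\}, \{\mathscr{C}\}]$, into the LHS and then swapping the order of summation to bring $\sum_{\mbf{n}}$ outside. For each fixed $\mbf{n}$, the next step is to invoke the combinatorial fact (inherited unchanged from \cite{CHS2018}, as it concerns only the scale assignment and the forest/cut-set structure, not the stochastic objects) that the pairs of intervals $(\mbb{M}, \mbb{G})$ with $\mbb{M} \in \mathfrak{M}^{\mbf{n}}$ and $\mbb{G} \in \mathfrak{G}^{\mbf{n}}(\mbb{M})$ form a partition of the set of pairs $(\mc{F}, \mathscr{C})$ with $\mc{F} \in \mbb{F}$ and $\mathscr{C} \sse \mathfrak{C} \backslash K(\mc{F})$, where each $(\mc{F}, \mathscr{C})$ belongs to a unique $(\mbb{M}, \mbb{G})$. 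Applying the second identity of the preceding lemma, $\sum_{\mc{F} \in \mbb{M}, \mathscr{C} \in \mbb{G}} \mc{W}^{\determ, \mbf{n}}_\lambda[\mc{J}, \{\mc{F}\}, \{\mathscr{C}\}] = \mc{W}^{\determ, \mbf{n}}_\lambda[\mc{J}, \mbb{M}, \mbb{G}]$, to collapse each partition block yields the middle expression.

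For the second equality, I would simply reinterpret the double sum over $\mbf{n}$ and $(\mbb{M}, \mbb{G})$ with the outer and inner orders swapped: defining $\mathfrak{R}$ to be the set of pairs $(\mbb{M}, \mbb{G})$ such that $\mbb{M} \in \mathfrak{M}^{\mbf{n}}$ and $\mbb{G} \in \mathfrak{G}^{\mbf{n}}(\mbb{M})$ for at least one $\mbf{n}$, and $\mc{N}_{\mbb{M}, \mbb{G}, \lambda}$ to be the set of scale assignments $\mbf{n}$ witnessing this containment, the equality follows from a direct Fubini-type reordering.

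The main (non-)obstacle is purely notational: one must verify that the $\determ$-dependent factors $\prod_u e^{\icomplex \beta \nodelabel(u) \determ(y_u)}$ are carried consistently through both identities of the preceding lemma, which they are by construction because they multiply the integrand pointwise and so do not interact with the purely combinatorial partition structure underlying $\mathfrak{M}^{\mbf{n}}$, $\mathfrak{G}^{\mbf{n}}(\mbb{M})$, and $\mathfrak{R}$. Consequently no new ideas beyond those already in \cite[Corollary 4.7]{CHS2018} are required.
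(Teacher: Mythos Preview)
Your proposal is correct and matches the paper's approach exactly: the paper simply states that ``the proof is exactly the same'' as \cite[Corollary 4.7]{CHS2018}, and your outline spells out precisely why---the two identities of the preceding lemma already carry the $\determ$-dependent factors, and the remaining partition argument over $\mathfrak{M}^{\mbf{n}}$, $\mathfrak{G}^{\mbf{n}}(\mbb{M})$, $\mathfrak{R}$ is pure combinatorics inherited unchanged from \cite{CHS2018}.
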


As in \cite{CHS2018}, the desired estimate \eqref{eq:bar-M-lambda-estimate} follows from the previous discussion combined with the following proposition.

\begin{proposition}[Analog of Proposition 4.8 of \cite{CHS2018}]\label{prop:moment-estimate-fixed-intervals}
For any $(\mbb{M}, \mbb{G}) \in \mathfrak{R}$, we have that uniformly in $\lambda \in (0, 1]$,
\begin{equs}
\sum_{\mbf{n} \in \mc{N}_{\mbb{M}, \mbb{G}, \lambda}} |\mc{W}_\lambda^{\determ, \mbf{n}}[\mc{J}, \mbb{M}, \mbb{G}]| \lesssim \|\mc{J}\| \lambda^{2p |\bar{T}^{\bar{\mathfrak{n}} \bar{\mathfrak{l}}}|_{\mathfrak{s}}}.
\end{equs}
\end{proposition}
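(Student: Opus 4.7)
The plan is to show that the proof of \cite[Proposition 4.8]{CHS2018}, which establishes the analogous bound in the $\determ \equiv 0$ case, extends to our setting with essentially no additional work beyond the trivial observation $|e^{\icomplex \beta \nodelabel(u) \determ(y_u)}| \leq 1$. Upon unraveling the recursive definition of $\bar{H}^{\determ, \mbf{n}}_{\mc{J}, \mbb{M}, \ovl{b(\mbb{M})}}$ given above, the quantity $\mc{W}_\lambda^{\determ, \mbf{n}}[\mc{J}, \mbb{M}, \mbb{G}]$ may be written as a single nested integral whose integrand coincides with the integrand from \cite{CHS2018} times an extra product of factors of the form $\prod_u e^{\icomplex \beta \nodelabel(u) \determ(y_u)}$, which accumulate from the exponential factors introduced at each level of the recursion (together with the factor coming from the outermost integral in the definition of $\mc{W}_\lambda^{\determ, \mbf{n}}$). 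Since each such exponential has modulus~$1$, so does the full product.

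Concretely, the first step is to write out $\mc{W}_\lambda^{\determ, \mbf{n}}[\mc{J}, \mbb{M}, \mbb{G}]$ as a single iterated integral whose integrand is a product of covariance factors $\mc{J}_{\mbf{n}}$, kernel factors $\mrm{Ker}_{\mbf{n}}$, renormalized kernel factors $\mrm{RKer}_{\mbf{n}}$ (arising from the Taylor-subtraction operators $\mathscr{Y}_{S, \mbb{M}}^{\#}$, which reduce to $0$-th order operators in our range $\beta^2 < 6\pi$, since every divergent subtree is a dipole with homogeneity in $(-1, 0)$), polynomial decoration factors, a test-function factor $\psi^\lambda_{x_\ostar}$ at each root, and the added exponential factors described above. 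The second step is to apply the triangle inequality at every level: one bounds $|\mc{W}_\lambda^{\determ, \mbf{n}}[\mc{J}, \mbb{M}, \mbb{G}]|$ by bringing absolute values inside all integrals, which eliminates the exponential factors and produces an expression identical to the non-negative majorant of the $\determ \equiv 0$ integrand analyzed in \cite{CHS2018}. The third step is then to invoke the proof of \cite[Proposition 4.8]{CHS2018} directly: that proof in fact establishes the desired bound for this non-negative majorant, summed over $\mbf{n} \in \mc{N}_{\mbb{M}, \mbb{G}, \lambda}$, via a careful scale-by-scale analysis using the kernel and covariance bounds \eqref{setup:eq-J-estimate}, the improved power counting for $\mrm{RKer}_{\mbf{n}}$, and a graph-theoretic summation argument.

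The main potential obstacle is simply bookkeeping: one must verify that, once the exponential factors are discarded, the resulting non-negative integrand really does coincide with the majorant analyzed in \cite{CHS2018}. Since the only modification to the pre-model is the replacement $\xi^\varep_{\pm} \mapsto e^{\pm \icomplex \beta \determ} \xi^\varep_{\pm}$, and the $e^{\pm \icomplex \beta \determ}$ prefactors commute with all covariance contractions, kernel convolutions, and Taylor subtractions used to build $\mc{W}_\lambda^{\determ, \mbf{n}}$, this matching is immediate. This is entirely in line with the approach of Lemmas~\ref{objects:lem-monopole} and~\ref{objects:lem-dipole}, where the trivial bound $|e^{\pm \icomplex \beta \determ}| \leq 1$ was the only new input beyond the corresponding arguments of \cite{HS16}.
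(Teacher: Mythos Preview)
Your proposal is correct and follows essentially the same approach as the paper: use $|e^{\icomplex \beta \nodelabel(u) \determ(y_u)}| = 1$ to reduce to the pointwise majorant already controlled by the proof of \cite[Proposition~4.8]{CHS2018}, which the paper makes explicit by tracing through the modified $\hat{H}^{\determ,\mbf{j}}$ operators and the analogs of \cite[Lemmas~5.1, 5.5, 5.8, 5.9]{CHS2018} rather than invoking the proposition as a black box. One small inaccuracy in your description: the $\mrm{RKer}_{\mbf{n}}$ factors arise from the cut-set interval $\mbb{G}$ (positive renormalization), not from the operators $\mathscr{Y}^{\#}_{S,\mbb{M}}$, which encode the forest interval $\mbb{M}$ (negative renormalization); this does not affect the argument.
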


In the following discussion, we discuss the proof of this proposition. Towards this end, we first discuss how \cite[Section 5]{CHS2018} needs to be adjusted. For $S \in \mc{B}$, $\mbf{j} \in \N^{\mc{E}'}$ with $\mc{E}' \supseteq \mc{E}^{\mrm{ext}}_{\mc{B}}(S)$, recursively define
\begin{align}
\relax [\hat{H}^{\determ, \mbf{j}}_{\mc{J}, S} \varphi](x) := &\sum_{\mbf{k} \in \mathring{\mc{N}}_S(\mbf{j})} \int_{\tilde{N}_{\mc{B}}(S)} dy \prod_{u \in \tilde{N}_{\mc{B}}(S)} e^{\icomplex \beta \nodelabel(u) \determ(y_u)} \mc{J}_{\mbf{k}}^{L_{\mc{B}}(S)^{(2)}}(y) \mrm{Ker}_{\mbf{k}}^{\mathring{K}_{\mc{B}}(S)}(y \sqcup x_{\varrho_S}) \notag \\
&\cdot \hat{H}^{\determ, \mbf{k}}_{\mc{J}, C_{\mc{B}}(S)}\Big[ \mc{J}_{\mbf{k}}^{P^\ptl_{\mc{F}}(S)} \mrm{Ker}_{\mbf{k}}^{K^\ptl_{\mc{B}}(S)}[\mathscr{Y}^{\#}_{S, \mbb{M}} \varphi] \Big](x_{\tilde{N}(S)^c} \sqcup y),
\end{align}
where the base case $\hat{H}^{\determ, \mbf{j}}_{\mc{J}, \varnothing}$ is defined to be the identity operator. Then, define $\hat{\mc{W}}^{\determ, \mbf{j}}_\lambda[\mc{J}, \mbb{M}, \mbb{G}]$ as in \cite[Equation (5.4)]{CHS2018}, except with an additional factor $\prod_{u \in N(\mc{B}, D_{2p})} e^{\icomplex \beta \nodelabel(u) \determ(y_u)}$, and with $\hat{H}^{\determ, \mbf{j}}_{\mc{J}, \mc{B}}$ replacing $\hat{H}^{\mbf{j}}_{\mc{J}, \mc{B}}$.

\begin{lemma}[Analog of Lemma 5.1 of \cite{CHS2018}]
For any $\lambda \in (0, 1]$, we have that
\begin{equs}
\sum_{\mbf{j} \in \ptl \mc{N}_{\mc{B}, \lambda}} \int_{N(\mc{B}, D_{2p}) \sqcup \{\ostar\}} \delta(y_{\ostar}) \hat{\mc{W}}^{\determ, \mbf{j}}_\lambda[\mc{J}, \mbb{M}, \mbb{G}](y) dy = \sum_{\mbf{n} \in \mc{N}_{\mbb{M}, \mbb{G}, \lambda}} \mc{W}^{\determ, \mbf{n}}_\lambda[\mc{J}, \mbb{M}, \mbb{G}].
\end{equs}
\end{lemma}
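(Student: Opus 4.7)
The plan is to follow the proof of Lemma 5.1 of \cite{CHS2018} essentially verbatim. The only difference between our $\mc{W}^{\determ, \mbf{n}}_\lambda$ and $\hat{\mc{W}}^{\determ, \mbf{j}}_\lambda$ and their non-$\determ$ counterparts in \cite{CHS2018} is the insertion of factors of the form $\prod_{u} e^{\icomplex \beta \nodelabel(u) \determ(y_u)}$ (together with the use of $\hat{H}^{\determ, \mbf{j}}$ in place of $\hat{H}^{\mbf{j}}$, which simply carries the same exponentials distributed through the recursion). These factors depend only on spatial variables (not on any scale assignment), and can therefore be carried through the combinatorial rearrangement of scales as multiplicative constants.

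First I would recall the core of the argument of \cite[Lemma 5.1]{CHS2018}: a scale assignment $\mbf{n} \in \mc{N}_{\mbb{M}, \mbb{G}, \lambda}$ is uniquely decomposed into an external scale assignment $\mbf{j} \in \ptl \mc{N}_{\mc{B}, \lambda}$ on $\mc{E}^{\mrm{ext}}_\mc{B}$, together with, for each $S \in \mc{B}$, a compatible internal scale assignment in $\mathring{\mc{N}}_S(\mbf{j})$. The recursive definition of $\hat{H}^{\mbf{j}}_{\mc{J}, \mc{B}}$ precisely implements this by summing over internal scales $\mbf{k}$ at each recursive level. Consequently,
\begin{equs}
\sum_{\mbf{n} \in \mc{N}_{\mbb{M}, \mbb{G}, \lambda}} (\cdot)_{\mbf{n}} = \sum_{\mbf{j} \in \ptl \mc{N}_{\mc{B}, \lambda}} (\text{summation over internal scales implemented by } \hat{H}^{\mbf{j}}_{\mc{J}, \mc{B}}),
\end{equs}
which is exactly the content of the corresponding identity in \cite[Lemma 5.1]{CHS2018}.

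Second, I would verify that the exponential decorations match up cleanly under this decomposition. Since $\mc{B}$ is a safe forest whose non-root nodes partition $N(\mc{B}, D_{2p})$, we have the disjoint decomposition
\begin{equs}
N(\mc{B}, D_{2p}) = \bigsqcup_{S \in \mc{B}} \tilde{N}_{\mc{B}}(S).
\end{equs}
Thus the product $\prod_{u \in N(\mc{B}, D_{2p})} e^{\icomplex \beta \nodelabel(u) \determ(y_u)}$ appearing on the LHS of the target identity factors exactly as a product over $S \in \mc{B}$ of local products $\prod_{u \in \tilde{N}_{\mc{B}}(S)} e^{\icomplex \beta \nodelabel(u) \determ(y_u)}$, which is precisely how the $\determ$-decoration is distributed through the recursive definition of $\hat{H}^{\determ, \mbf{j}}_{\mc{J}, \mc{B}}$ inside $\hat{\mc{W}}^{\determ, \mbf{j}}_\lambda$. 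All other factors ($\mc{J}_{\mbf{k}}^{\cdot}$, $\mrm{Ker}_{\mbf{k}}^{\cdot}$, $\mrm{RKer}_{\mbf{k}}^{\cdot}$, $\mathscr{Y}^{\#}_{S, \mbb{M}}$, and the polynomial decorations $X_{\bar{\mathfrak{n}}, \ostar, \mbf{n}}^{\cdot}$) are identical to those of \cite{CHS2018} and behave identically under the scale rearrangement.

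Combining these two observations yields the identity: summing $\hat{\mc{W}}^{\determ, \mbf{j}}_\lambda[\mc{J}, \mbb{M}, \mbb{G}]$ over $\mbf{j} \in \ptl \mc{N}_{\mc{B}, \lambda}$ and integrating in the $y$-variables recovers, after unrolling the recursion defining $\hat{H}^{\determ, \mbf{j}}_{\mc{J}, \mc{B}}$, exactly $\sum_{\mbf{n} \in \mc{N}_{\mbb{M}, \mbb{G}, \lambda}} \mc{W}^{\determ, \mbf{n}}_\lambda[\mc{J}, \mbb{M}, \mbb{G}]$. The main obstacle is purely bookkeeping --- namely, tracking that each node $u \in N(\mc{B}, D_{2p})$ contributes its exponential factor to exactly one recursive level of $\hat{H}^{\determ, \mbf{j}}_{\mc{J}, \mc{B}}$, corresponding to the unique $S \in \mc{B}$ with $u \in \tilde{N}_{\mc{B}}(S)$. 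No essentially new argument beyond \cite[Lemma 5.1]{CHS2018} is needed.
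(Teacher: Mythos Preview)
Your approach is correct and matches the paper's treatment: the paper states this lemma without proof, implicitly deferring to \cite[Lemma 5.1]{CHS2018} with the understanding that the scale-independent exponential factors pass through the rearrangement unchanged.

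One bookkeeping slip to fix: the displayed decomposition
\[
N(\mc{B}, D_{2p}) = \bigsqcup_{S \in \mc{B}} \tilde{N}_{\mc{B}}(S)
\]
is not correct --- these two sets are complementary in $N(D_{2p})$, not equal (recall from the proof of the analog of Lemma~4.2 that removing a tree $T$ from the forest enlarges $N(\cdot, D_{2p})$ by $\tilde{N}(T)$). What you actually need, and what is true by construction, is simpler: both $\mc{W}^{\determ, \mbf{n}}_\lambda$ and $\hat{\mc{W}}^{\determ, \mbf{j}}_\lambda$ carry the \emph{same} outer factor $\prod_{u \in N(\mc{B}, D_{2p})} e^{\icomplex \beta \nodelabel(u) \determ(y_u)}$ (since $\mc{B} = b(\mbb{M})$), and their respective recursion operators $\bar{H}^{\determ, \mbf{n}}_{\mc{J}, \mbb{M}, \cdot}$ and $\hat{H}^{\determ, \mbf{j}}_{\mc{J}, \cdot}$ carry identical inner exponential factors $\prod_{u \in \tilde{N}_{\mc{B}}(S)} e^{\icomplex \beta \nodelabel(u) \determ(y_u)}$ at each recursive level. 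Since none of these depend on the scale assignment, they factor out of the scale rearrangement of \cite[Lemma 5.1]{CHS2018} on both sides identically. No matching-up argument between outer and inner factors is needed.
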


Lemmas 5.2 and 5.3 of \cite{CHS2018} are unchanged, because they do not involve our modified definitions. 

\begin{lemma}[Analog of Lemma 5.5 of \cite{CHS2018}]
Let $\mc{F} \sse \mc{B}$ with $\mrm{depth}(\mc{F}) \leq 1$. Then, uniform in $\mc{J} \in \mathfrak{J}$, $x \in (\R^d)^{\tilde{N}(\mc{F})^c}$, $\mbf{j} \in \mbf{N}^{\mc{E}'}$ with $\mc{E}' \supseteq \mc{E}^{\mrm{ext}}_{\mc{B}}(\mc{F})$, and $\varphi \in \mathscr{C}$, we have that
\begin{equs}
|\hat{H}^{\determ, \mbf{j}}_{\mc{J}, \mc{F}}[\varphi](x)| \lesssim \bigg(\prod_{S \in \mc{F}} 2^{-|S^0|_{\mrm{SG}} \mrm{ext}^{\mbf{j}}_{\mc{B}}(S)} \|\mc{J}\|_{L(S)}\bigg) \|\varphi\|_{\mc{F}, \mbf{j}}(x).
\end{equs}
\end{lemma}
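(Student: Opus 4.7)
The plan is to reduce the estimate to the corresponding statement in \cite{CHS2018}, exploiting the fact that the only new ingredient in $\hat H^{\determ,\mathbf{j}}_{\mathcal{J},\mathcal{F}}$ compared to $\hat H^{\mathbf{j}}_{\mathcal{J},\mathcal{F}}$ is a product of unit-modulus factors of the form $e^{\icomplex\beta\,\nodelabel(u)\determ(y_u)}$. More precisely, I would induct on the size of $\mathcal{F}$, exactly following the inductive structure already used in \cite[Lemma~5.5]{CHS2018}. The base case $\mathcal{F}=\varnothing$ is trivial since $\hat H^{\determ,\mathbf{j}}_{\mathcal{J},\varnothing}$ is the identity by definition.

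For the inductive step, write out $[\hat H^{\determ,\mathbf{j}}_{\mathcal{J},\mathcal{F}}\varphi](x)$ using the recursive definition, picking some $S\in\mathcal{F}$ to peel off. Because $\mathrm{depth}(\mathcal{F})\leq 1$, the subforest $C_{\mathcal{B}}(S)$ has one smaller cardinality (and still depth $\leq 1$), so the inductive hypothesis applies to it. Now move the absolute value inside the sum over $\mathbf{k}\in\mathring{\mathcal N}_S(\mathbf{j})$ and the integral over $\tilde N_{\mathcal{B}}(S)$, and use the elementary bound
\begin{equs}
\Bigl|\prod_{u\in\tilde N_{\mathcal{B}}(S)}e^{\icomplex\beta\,\nodelabel(u)\determ(y_u)}\Bigr|\leq 1.
\end{equs}
After this step, the integrand is pointwise dominated by exactly the integrand that appears in the unmodified quantity $[\hat H^{\mathbf{j}}_{\mathcal{J},\mathcal{F}}|\varphi|](x)$ from \cite[Section~5]{CHS2018}. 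Since the bound \cite[Lemma~5.5]{CHS2018} is itself proved by taking absolute values inside and estimating, one may either cite that estimate directly, or repeat its argument verbatim, to conclude
\begin{equs}
\bigl|\hat H^{\determ,\mathbf{j}}_{\mathcal{J},\mathcal{F}}[\varphi](x)\bigr|
\;\lesssim\;\Bigl(\prod_{S\in\mathcal{F}}2^{-|S^0|_{\mathrm{SG}}\mathrm{ext}^{\mathbf{j}}_{\mathcal{B}}(S)}\|\mathcal{J}\|_{L(S)}\Bigr)\|\varphi\|_{\mathcal{F},\mathbf{j}}(x),
\end{equs}
as claimed.

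The main (and essentially only) obstacle is verifying that the trivial domination $|e^{\icomplex\beta\,\nodelabel(u)\determ(y_u)}|\leq 1$ is really the only place where $\determ$ enters: all of the combinatorial bookkeeping (the renormalization operators $\mathscr{Y}^{\#}_{S,\mathbb{M}}$, the splitting between $\mathring{K}_{\mathcal{B}}(S)$ and $K^{\ptl}_{\mathcal{B}}(S)$, the kernel and scale assignment factors) is identical to the setup in \cite{CHS2018}. Since the exponential factor is independent of all of these structures and is supported at the integration variables, it commutes with taking absolute values and can be dropped uniformly in $\determ$. This confirms the advertised theme of the paper that all stochastic estimates for the modified model can be upgraded from the ones in \cite{CHS2018} by the sole addition of the bound $|e^{\pm\icomplex\beta\determ}|\leq 1$.
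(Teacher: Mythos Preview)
Your proposal is correct and follows essentially the same approach as the paper: both argue by the inductive structure of \cite[Lemma~5.5]{CHS2018}, observe that the only modification is the extra factor $\prod_{u}e^{\icomplex\beta\,\nodelabel(u)\determ(y_u)}$, bound it trivially by $1$, and then defer to the argument in \cite{CHS2018}. The paper additionally remarks that \cite[Lemma~5.6]{CHS2018}, used inside that argument, is unaffected by the modification; you might add a sentence to that effect for completeness.
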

\begin{proof}
The proof is essentially exactly the same as the proof of \cite[Lemma 5.5]{CHS2018}. The only difference is that the function $F^{\mbf{k}}$ appearing there is modified, in that we define $F^{\determ, \mbf{k}}$ by
\begin{align*}
F^{\determ, \mbf{k}}(y) 
&:= \prod_{u \in \tilde{N}_{\mc{B}}(S)} e^{\icomplex \beta \nodelabel(u) \determ(y_u)} \mc{J}_{\mbf{k}}^{L_{\mc{B}}(S)^{(2)}}(y) \mrm{Ker}_{\mbf{k}}^{\tilde{K}_{\mc{B}}(S)}(y) \\
&\hspace{3ex}\cdot \hat{H}^{\determ, \mbf{k}}_{\mc{J}, C_{\mc{B}}(S)} \Big[ \mc{J}_{\mbf{k}}^{P^\ptl_{\mc{F}}(S)} \mrm{Ker}_{\mbf{k}}^{K^\ptl_{\mc{B}}(S)}[\mathscr{Y}^{\#}_{S, \mbb{M}} \varphi] \Big](x_{\tilde{N}(S)^c} \sqcup y),
\end{align*}
with the only difference being the additional $\prod_{u \in \tilde{N}_{\mc{B}}(S)} e^{\icomplex \beta \nodelabel(u) \determ(y_u)}$ factor and the use of $\hat{H}^{\determ, \mbf{k}}_{\mc{J}, C_{\mc{B}}(S)}$ instead of $\hat{H}^{\mbf{k}}_{\mc{J}, C_{\mc{B}}(S)}$. With this definition, we have that
\begin{equs}
\hat{H}^{\determ, \mbf{j}}_{\mc{J}, S}[\varphi](x \sqcup y_{\varrho_S}) = \sum_{\mbf{k} \in \mathring{N}_S(\mbf{j})} \int_{\tilde{N}_{\mc{B}}(S)} dy F^{\determ, \mbf{k}}(y).
\end{equs}
The inductive argument to bound this is then exactly the same as in \cite{CHS2018}, because of the trivial bound 
\[
\bigg|\prod_{u \in \tilde{N}_{\mc{B}}(S)} e^{\icomplex \beta \nodelabel(u) \determ(y_u)}\bigg| \leq 1.
\]
We remark that \cite[Lemma 5.6]{CHS2018}, which is needed in the proof of \cite[Lemma 5.5]{CHS2018} as well as the current proof, remains exactly the same since it does not involve our modified definitions.
\end{proof}

Lemmas 5.7 of \cite{CHS2018} remains unchanged, as it does not involve our modified definitions. The analog of Lemma 5.8 of \cite{CHS2018} (with $\hat{H}^{\mbf{j}}_{\mc{J}, \ovl{\mc{B}}}$ in that lemma replaced by our modified $\hat{H}^{\determ, \mbf{j}}_{\mc{J}, \ovl{\mc{B}}}$) may be proven in the exact same manner\footnote{Technically, \cite{CHS2018} does not provide a proof of this lemma, with the point being that it is essentially the same proof as \cite[Lemma 8.15]{CH16}}. Finally, by combining the previous estimates, we may obtain 
\begin{equs}
\|\hat{\mc{W}}_\lambda^{\determ}\|_{\zeta, \mc{N}_{\mbf{G}}} \lesssim \lambda^{-2p|\mathfrak{s}|} \|\mc{J}\|,
\end{equs}
which is the analog of \cite[Lemma 5.9]{CHS2018}. The proof of Proposition \ref{prop:moment-estimate-fixed-intervals} now follows from this estimate and \cite[Lemmas 5.11, 5.12 and Theorem B.10]{CHS2018}.

\section{More calculations involving the modified model}\label{appendix:more-calculations}

In this section, we prove the identity \eqref{appendix:eq-model}. First, for a tree $\bar{T}^{\mathfrak{n} \mathfrak{l}} \in \mc{T}^-$, let $\hat{\model}_{x_\ostar}^{\determ, \varep}[\bar{T}^{\mathfrak{n} \mathfrak{l}}]$ be the model given by the right hand side of \eqref{appendix:eq-model}, i.e., when tested against a test function $\psi$, we obtain 
\begin{align}
\hat{\model}^{\determ, \varep}_{x_\ostar}[\bar{T}^{\bar{\mathfrak{n}} \bar{\mathfrak{l}}}](\psi) = \sum_{\substack{\mc{G} \in \mbb{F}_j \\ \mathscr{D} \sse \mathfrak{C} \backslash K(\mc{G})}} &\int_{N(\mc{G}, \bar{T})} dy \xi^{L(\mc{G}, \bar{T}), \varep}(y) \prod_{u \in N(\mc{G}, \bar{T})} e^{\icomplex \beta \nodelabel(u) \determ(y_u)} \cdot \mrm{Ker}^{K(\mc{G}, \bar{T}) \backslash \mathscr{D}}(y) \notag \\
&\cdot \psi(z_{\varrho_{\bar{T}}}) \cdot \mrm{RKer}^{K(\mc{G}, \bar{T}) \cap \mathscr{D}}(z) \cdot X_{\bar{\mathfrak{n}}, \ostar}^{N(\mc{G}, \bar{T})}(z) \label{appendix:eq-model-RHS}\\
&\cdot H_{\mc{J}, \mc{G}, \bar{\mc{G}}}^\determ \Big[ \mrm{RKer}^{K^{\downarrow}(\bar{\mc{G}}) \cap \mathscr{D}} \cdot \mrm{Ker}^{K^{\downarrow}(\bar{\mc{G}}) \backslash \mathscr{D}} X_{\bar{\mathfrak{n}}, \ostar}^{\tilde{N}(\bar{\mc{G}})}\Big](z).\notag 
\end{align}

With this, the identity \eqref{appendix:eq-model} can be stated as the equality $\model^{\determ, \varep}_{x_\ostar} = \hat{\model}^{\determ, \varep}_{x_\ostar}$. By the calculations of Lemma \ref{appendix:lem-premodel-to-model}, this will follow directly from the next lemma. Here, we only check three representative cases, as the proof for other dipoles and tripoles is very similar. 

\begin{lemma}\label{appendix:lem-model-via-recursive} 
For any $\varep>0$ and $x_\ostar \in (\R \times \T^2)^{\{ \ostar\}}$,  we have that
\begin{align}
\hat{\model}^{\determ, \varep}_{x_\ostar}\bigg[\,  \dipmp\, \bigg] &=  \xi^{\determ,\varep}_-  \big( K \ast \xi^{\determ,\varep}_+ \big) - \xi^{\determ,\varep}_- \big(K \ast \xi^{\determ,\varep}_+\big)(x_\ostar) - \E \Big[ \xi^{\determ,\varep}_- \big(K \ast \xi^{\determ,\varep}_+\big)\Big], 
\label{appendix:eq-calculation-model-dipole} \\
\hat{\model}^{\determ, \varep}_{x_\ostar}\bigg[\,  \vtripolempp\, \bigg] &= \xi^{\determ,\varep}_- \big( K \ast \xi^{\determ,\varep}_+ - (K \ast \xi^{\determ,\varep}_+ )(x_\ostar) \big)^2 \label{appendix:eq-calculation-model-vtripole} \\
&- 2\E \Big[ \xi^{\determ,\varep}_- \big( K \ast \xi^{\determ,\varep}_+\big) \Big] 
\, \big( K \ast \xi^{\determ,\varep}_+ -  (K \ast \xi^{\determ,\varep}_+ )(x_\ostar) \big), \notag  
 \\
\hat{\model}^{\determ, \varep}_{x_\ostar}\Bigg[\,  \ltripolepmp\, \Bigg] &= \xi^{\determ, \varep}_+ \bigg(K \ast \bigg( \model^{\determ, \varep}_{x_\ostar}\bigg[\,  \dipmp\, \bigg]\bigg) - K \ast \bigg(\model^{\determ, \varep}_{x_\ostar}\bigg[\,  \dipmp\, \bigg]\bigg)(x_{\ostar})\bigg)  \label{appendix:eq-calculation-model-ltripole}\\
&\quad - \E\Big[\xi^{\determ, \varep}_+ K \ast \xi^{\determ, \varep}_-\Big] \Big(K \ast \xi^{\determ, \varep}_+ - (K \ast \xi^{\determ, \varep}_+)(x_{\ostar})\Big) \\
&\quad - \sum_{|k|_{\mathfrak{s}} = 1} (\cdot - x_{\ostar})^k \xi^{\determ, \varep}_+ \bigg(D^k K \ast \model^{\determ, \varep}_{x_\ostar}\bigg[\,  \dipmp\, \bigg]\bigg)(x_{\ostar}). \label{appendix:eq-calculation-model-ltripole-end}
\end{align}
\end{lemma}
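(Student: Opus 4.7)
\medskip

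\noindent\textbf{Proof plan.} The strategy is a direct term-by-term unfolding of the right-hand side \eqref{appendix:eq-model-RHS} in each case, followed by matching with the recursive output already computed in Lemma~\ref{appendix:lem-premodel-to-model}. Both sides consist of finite sums of the same building blocks, namely products of $\xi^{\determ,\varep}_\pm$, $K$-convolutions, polynomials in $(\cdot - x_\ostar)$, and expectations of such; hence the proof reduces to combinatorial bookkeeping: enumerate admissible pairs $(\mc{G},\mathscr{D})$, simplify the action of $H^{\determ}_{\mc{J},\mc{G},\bar{\mc{G}}}$ using its defining recursion \eqref{eq:H-determ}, and collect terms.

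For $\bar{T}=\dipmp$, the only negative-homogeneity subtree is $\bar{T}$ itself, so $\mc{G}\in\{\varnothing,\{\bar{T}\}\}$, and $\mathscr{D}$ is a subset of the single kernel edge. The four resulting contributions give, in turn, $\xi^{\determ,\varep}_-\,(K\ast\xi^{\determ,\varep}_+)$ when $\mc{G}=\mathscr{D}=\varnothing$; the evaluation $-\xi^{\determ,\varep}_-(K\ast\xi^{\determ,\varep}_+)(x_\ostar)$ when the single edge is placed in $\mathscr{D}$ (via the defining property of $\mrm{RKer}$ that pins the upper endpoint to $x_\ostar$); and, when $\mc{G}=\{\bar{T}\}$, the degree-$0$ Taylor expansion $\mathscr{Y}_{\bar{T}}$ applied inside $H^{\determ}$ to the constant test function produces the BPHZ expectation $-\E[\xi^{\determ,\varep}_-(K\ast\xi^{\determ,\varep}_+)]$. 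This yields \eqref{appendix:eq-calculation-model-dipole}.

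For the v-tripole, the two embedded dipole subtrees joining the central $\xi_-$ with each of the two $\xi_+$ legs are the only divergent subtrees (the v-tripole itself has positive homogeneity in the range $\beta^2<6\pi$). Enumerating the $2^2=4$ subsets of these dipoles for $\mc{G}$, together with refinements of $\mathscr{D}$ on the two kernel edges, symmetrically expands the square $\bigl(K\ast\xi^{\determ,\varep}_+-(K\ast\xi^{\determ,\varep}_+)(x_\ostar)\bigr)^{2}$ multiplied by $\xi^{\determ,\varep}_-$, together with the two cross terms linear in $\E[\xi^{\determ,\varep}_-(K\ast\xi^{\determ,\varep}_+)]$ coming from the forests of size one, and reproduces \eqref{appendix:eq-calculation-model-vtripole}.

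The line tripole is the delicate case and the principal obstacle. The divergent subtrees here are the inner dipole $\dipmp$ and the integrated dipole $\mc{I}\dipmp$, whose homogeneity lies in $(1,2)$; consequently $\mathscr{Y}_{\mc{I}\dipmp}$ must be a \emph{first-order} Taylor jet, and this is the mechanism responsible for the polynomial correctors $(\cdot-x_\ostar)^k$ with $|k|_{\mathfrak{s}}=1$ appearing in line \eqref{appendix:eq-calculation-model-ltripole-end}. The verification consists in checking that the action of $H^{\determ}_{\mc{J},\mc{G},\bar{\mc{G}}}$ on the argument specified in \eqref{appendix:eq-model-RHS} reproduces both the degree-$1$ polynomial with coefficients $(D^kK\ast\model^{\determ,\varep}_{x_\ostar}\dipmp)(x_\ostar)$, and the additional expectation term $-\E[\xi^{\determ,\varep}_+(K\ast\xi^{\determ,\varep}_-)]\bigl(K\ast\xi^{\determ,\varep}_+-(K\ast\xi^{\determ,\varep}_+)(x_\ostar)\bigr)$. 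This is done by matching the iterated $(\mrm{Id}-\mathscr{Y}_S)$ operations inside $H^{\determ}$, term by term, against Steps~7--8 of the proof of Lemma~\ref{appendix:lem-premodel-to-model} together with the explicit formula for $f^{\determ,\varep}_{x_\ostar}(\mc{I}_k\,\dipmp)$ given there; the only nontrivial check is that the first-order Taylor correction produces precisely the prefactor $(\cdot-x_\ostar)^k/k!$ with the right coefficient.
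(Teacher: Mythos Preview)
Your overall strategy—unfold \eqref{appendix:eq-model-RHS} term by term and match against Lemma~\ref{appendix:lem-premodel-to-model}—is exactly what the paper does, and the dipole case is essentially right (there are three admissible pairs $(\mc{G},\mathscr{D})$, not four, since $\mathscr{D}\subseteq\mathfrak{C}\setminus K(\mc{G})=\varnothing$ when $\mc{G}=\{\bar T\}$). But the two tripole cases contain genuine errors that would derail the calculation.

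For the v-tripole you claim $2^2=4$ forests. The two dipole subtrees $S^1,S^2$ share the root $\varrho_{\bar T}$ and are therefore neither disjoint nor nested, so $\{S^1,S^2\}$ is \emph{not} an admissible forest in the sense of \cite{CHS2018}. The correct list is $\mc{G}\in\{\varnothing,\{S^1\},\{S^2\}\}$, giving the eight pairs $(\mc{G},\mathscr{D})$ in \eqref{appendix:eq-calculation-vtripole-options}. Including a spurious fourth forest would produce an extra term with no counterpart in \eqref{appendix:eq-calculation-model-vtripole}.

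For the line tripole your identification of both the divergent subtrees and the source of the polynomial correctors is wrong. The integrated object $\mc{I}(\Xi_-\mc{I}\Xi_+)$ has homogeneity $4-2\bar\beta\in(1,2)$ and is \emph{not} divergent; the two divergent subtrees are the dipoles $S^1$ (edge $e^1$, root $+$ with child $-$) and $S^2$ (edge $e^2$, root $-$ with child $+$). More importantly, since $\beta^2<6\pi$ every element of $\mrm{Div}$ is a dipole with $|S^0|_{\mathfrak{s}}>-1$, so every $\mathscr{Y}_S$ is the degree-\emph{zero} Taylor projection (the paper notes this explicitly just after \eqref{eq:bar-H-determ}). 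The first-order polynomial terms in \eqref{appendix:eq-calculation-model-ltripole-end} do not come from any $\mathscr{Y}$ at all; they come from $\mrm{RKer}$ on the \emph{outer} edge $e^1$, which carries $\gamma(e^1)=2$ and hence a first-order Taylor subtraction at $x_\ostar$:
\[
\mrm{RKer}^{\{e^1\}}(x_\ostar,y_{\varrho_{\bar{T}}},y_{e^1_-}) = -K(x_\ostar-y_{e^1_-})-\sum_{|k|_{\mathfrak{s}}=1}(y_{\varrho_{\bar{T}}}-x_\ostar)^k\, D^kK(x_\ostar-y_{e^1_-}).
\]
With the correct forests and this mechanism the bookkeeping goes through exactly as in the paper's Cases~3.a--3.c, but the plan as you wrote it would not produce the right terms.
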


\begin{figure}[t]
\begin{center}
\begin{tabular}{P{1.5cm}|P{1.5cm}"P{1.5cm}|P{1.5cm}|P{1.5cm}|P{1.5cm}|P{1.5cm}|P{1.7cm}|}
$\bar{T}$ & $\mc{G}$ & $\bar{\mc{G}}$ & $K^\downarrow(\bar{\mc{G}})$
& $K(\mc{G},\bar{T})$ & $L(\mc{G},\bar{T})$ & $N(\mc{G},\bar{T})$ \\ \thickhline 
\multirow{2}{1.5cm}{\hspace{4ex}$\dipmp$}   & $\emptyset$ & $\emptyset$ & $\emptyset$ & $\big\{ e \big\}$
& $\big\{e_-,e_+\big\}$ & $\big\{e_-,e_+\big\}$ \\ \cline{2-7} 
& $\big\{\bar{T}\big\}$ & $\big\{ \bar{T} \big\}$ & $\emptyset$ & $\emptyset$
& $\emptyset$ & $\big\{e_-\big\}$ \\ \thickhline
\multirow{3}{1.5cm}{\hspace{2ex}$\vtripolempp$} 
& $\emptyset$ & $\emptyset$
& $\emptyset$ & $\big\{ e^1,e^2 \big\}$ & $\hspace{-1ex}\big\{ \varrho_{\bar{T}}, e^1_+, e^2_+ \big\}$ & $ \hspace{-1ex}\big\{ \varrho_{\bar{T}}, e^1_+, e^2_+ \big\}$ \\ \cline{2-7}
& $\big\{ S^1 \big\}$ & $\big\{ S^1 \big\} $ 
& $\big\{ e^2 \big\}$  & $\emptyset$
& $\big\{ e^2_+ \big\}$ 
& $\big\{ \varrho_{\bar{T}}, e^2_+ \big\} $ \\ \cline{2-7}
& $ \big\{ S^2 \big\}$ & $\big\{ S^2 \big\} $ 
& $\big\{ e^1 \big\} $ & $\emptyset$
& $\big\{ e^1_+ \big\}$ 
& $\big\{ \varrho_{\bar{T}}, e^1_+ \big\} $ \\[0ex]
\thickhline 
\multirow{3}{1.5cm}{\hspace{4ex}$\ltripolepmp$} & $\emptyset$ & $\emptyset$
& $\emptyset$ & $\big\{ e^1,e^2 \big\}$ & $\hspace{-1ex}\big\{ \varrho_{\bar{T}}, e^1_-, e^2_+ \big\}$ & $ \hspace{-1ex}\big\{ \varrho_{\bar{T}}, e^1_-, e^2_+ \big\}$ \\
\cline{2-7} & $\big\{ S^1 \big\}$ & $\big\{ S^1 \big\} $ 
& $\{e^2\}$ & $\emptyset$ 
& $\big\{ e^2_+ \big\}$ 
& $\big\{ \varrho_{\bar{T}}, e^2_+ \big\} $ 
\\
\cline{2-7}
& $ \big\{ S^2 \big\}$ & $\big\{ S^2 \big\} $ 
& $\emptyset$ & $\big\{ e^1 \big\} $
& $\big\{ \varrho_{\bar{T}} \big\}$ 
& $\big\{ \varrho_{\bar{T}}, e^1_- \big\} $ \\
\end{tabular}
\caption{We list the different sets appearing in \eqref{appendix:eq-model-RHS} for the dipole and tripoles from Lemma~\ref{appendix:lem-model-via-recursive}.}
\label{figure:dipole-forests}
\end{center}
\end{figure}

\begin{figure}
\begin{center}
\begin{tabular}{P{1.2cm}|P{1.2cm}|P{1.2cm}"P{1.2cm}|P{1.2cm}|P{1.2cm}|P{1.2cm}|P{1.2cm}|P{1.2cm}|}
$\bar{T}$ & $\mc{F}$ & $S$ & $C_{\mc{F}}(S)$ & $\mathring{K}_{\mc{F}}(S)$
& $K^\ptl_{\mc{F}}(S)$ & $L_{\mc{F}}(S)^{(2)}$ & $\tilde{N}_{\mc{F}}(S)$ \\ \thickhline 
\scalebox{0.85}{$\dipmp$} & $\big\{ \bar{T} \big\}$ & $\bar{T}$ &  $\emptyset$ & $\big\{e \big\} $ & $\emptyset$
& $\hspace{-1ex}\big\{(e_-,e_+)\big\}$ & $\big\{e_+\big\}$
\end{tabular}
\caption{We list the different sets  appearing in \eqref{eq:H-determ} for the dipole, the forest $\mc{F}=\{\bar{T}\}$, and the tree $S=\bar{T}$.}
\label{figure:dipole-tree}
\end{center}
\end{figure}

In the following proof, we heavily rely on the notation from Sections 2.6, 3.1, and 3.2 in \cite{CHS2018}. However, since it would take several pages, we do not recall it here.

\begin{proof}  
To match the notation in Proposition \ref{appendix:prop-model-and-moments}, we express all identities below using integrals against a test-function $\psi$. We separate our treatments of the dipole in \eqref{appendix:eq-calculation-model-dipole} and tripoles in \eqref{appendix:eq-calculation-model-vtripole} and \eqref{appendix:eq-calculation-model-ltripole}.\\

\emph{Case 1: The dipole from \eqref{appendix:eq-calculation-model-dipole}.} Throughout this argument, we label the nodes and edges of the dipole as in 
\begin{equation*}
\dipmplabeled.
\end{equation*} 
To simplify the notation, we denote this dipole by   $\bar{T}$. Since $4\pi \leq \beta^2 <6\pi$, it follows from \cite[Definition~2.17]{CHS2018} that $\gamma(e)=1$ and $\mathfrak{C}=\{ e \}$. As a result, the sum in \eqref{appendix:eq-model-RHS} ranges over 
\begin{equation}\label{appendix:eq-calculation-dipole-options}
\big( \mc{G}, \mathscr{D} \big) \in 
\Big \{ \big( \emptyset, \emptyset\big), \big( \emptyset, \{ e \} \big), \big(  \big\{ \bar{T} \big\}, \emptyset \big) \Big\}.
\end{equation}
For   the forests $\mc{G}=\emptyset, \{ \bar{T}\}$, the different sets of edges, nodes, and noise nodes appearing \eqref{appendix:eq-model-RHS} have been listed in Figure \ref{figure:dipole-forests}. Furthermore, for the forest $\mc{F}=\{ \bar{T}\}$ and the tree $S=\bar{T}$, the different sets appearing in~\eqref{eq:H-determ} have been listed in Figure~\ref{figure:dipole-tree}. We now write $\hat{\model}^{\determ, \varep}_{x_\ostar,\mc{G},\mathscr{D}}[\bar{T}](\psi)$ 
for the summands on the right-hand side of~\eqref{appendix:eq-model-RHS} and, using the information from Figure~\ref{figure:dipole-forests} and Figure~\ref{figure:dipole-tree}, compute them for the three options in \eqref{appendix:eq-calculation-dipole-options}.\\

\emph{Case 1.a: $\mc{G}=\emptyset$.} Using the first row of Figure \ref{figure:dipole-forests} and that $ H^{\determ}_{\mc{J}, \mc{F},\emptyset}=\operatorname{Id}$ for all forests $\mc{F}$, we obtain that 
\begin{align}
&\hspace{3ex} \hat{\model}^{\determ,\varep}_{x_\ostar,\emptyset,\mathscr{D}} \big[ \bar{T} \big](\psi) \notag \\
&= \int \dy_{e_-} \dy_{e_+} \, \psi(y_{e_-}) \xi^{\varep}_-(y_{e_-}) \xi^{\varep}_+(y_{e_+}) 
e^{\icomplex \beta (\determ(y_{e_+})-\determ(y_{e_-}))} \notag \\
&\hspace{10ex}\times \mrm{Ker}^{\{e\}\backslash \mathscr{D}}(y_{e_-},y_{e_+}) \, \mrm{RKer}^{\mathscr{D}}(y_{e_-},y_{e_+},x_{\ostar}) H^{\determ}_{\Jc_\varep,\emptyset,\emptyset}[1](y_{e_-},y_{e_+}) \notag \\
&= \int  \dy_{e_-} \dy_{e_+} \, \psi(y_{e_-}) \xi^{\determ,\varep}_-(y_{e_-}) \xi^{\determ,\varep}_+(y_{e_+})  \mrm{Ker}^{\{e\}\backslash \mathscr{D}}(y_{e_-},y_{e_+}) \, \mrm{RKer}^{\mathscr{D}}(y_{e_-},y_{e_+},x_{\ostar}). \label{appendix:eq-calculation-dipole-p1}
\end{align}
From \cite[(3.3) and (3.4)]{CHS2018}, it directly follows that 
\begin{equation}\label{appendix:eq-calculation-dipole-p2}
\mrm{Ker}^{\{e\}\backslash \mathscr{D}}(y_{e_-},y_{e_+}) \, \mrm{RKer}^{\mathscr{D}}(y_{e_-},y_{e_+},x_{\ostar})
= \begin{cases}
\begin{aligned}
K(y_{e_-}-y_{e_+}) \qquad &\text{if } \mathscr{D}=\emptyset, \\
-K(x_\ostar - y_{e_+}) \qquad &\text{if } \mathscr{D}=\{e\}.
\end{aligned}
\end{cases}
\end{equation}
By inserting \eqref{appendix:eq-calculation-dipole-p2} into \eqref{appendix:eq-calculation-dipole-p1}, we directly obtain that 
\begin{align*}
\hat{\model}^{\determ,\varep}_{x_\ostar,\emptyset,\emptyset} \big[ \bar{T} \big](\psi)
&= \int \dy_{e_-}\psi(y_{e_-}) \xi^{\determ,\varep}_-(y_{e_-})  \big( K \ast \xi^{\determ,\varep}_+ \big)(y_{e_-}) \\ 
\text{and} \qquad 
\hat{\model}^{\determ,\varep}_{x_\ostar,\emptyset,\{e\}} \big[ \bar{T} \big](\psi) 
&= - \big( K \ast \xi^{\determ,\varep}_+\big)(x_\ostar) \int \dy_{e_-}\psi(y_{e_-}) \xi^{\determ,\varep}_-(y_{e_-}). 
\end{align*}
Thus, $\hat{\model}^{\determ,\varep}_{x_\ostar,\emptyset,\emptyset} \big[ \bar{T} \big](\psi)$ and $\hat{\model}^{\determ,\varep}_{x_\ostar,\emptyset,\{e\}} \big[ \bar{T} \big](\psi) $ correspond to the first and second summand in \eqref{appendix:eq-calculation-model-dipole}, respectively.\\

\emph{Case 1.b: $\mc{G}=\{\bar{T}\}$.} From \eqref{appendix:eq-calculation-dipole-options}, it follows that $\mathscr{D}=\emptyset$. Using the second row of Figure \ref{figure:dipole-forests}, we obtain that 
\begin{align}\label{appendix:eq-calculation-dipole-p3}
\hat{\model}^{\determ,\varep}_{x_\ostar,\{\bar{T}\},\emptyset} \big[ \bar{T} \big](\psi) 
&= \int \dy_{e_-} \, \psi(y_{e_-}) 
e^{-\icomplex \beta \determ(y_{e_-})} H^{\determ}_{\Jc_\varep,\{\bar{T}\},\{\bar{T}\}}[1](y_{e_-}).
\end{align}
Using the definition from \cite[(3.6)]{CHS2018}, we have that $H^{\determ}_{\Jc_\varep,\{\bar{T}\},\{\bar{T}\}}= H^{\determ}_{\Jc_\varep,\{\bar{T}\},\bar{T}}$. Together with $H^\determ_{\Jc_\varep,\mc{F},\emptyset}=\operatorname{Id}$, 
\eqref{eq:H-determ}, and Figure \ref{figure:dipole-tree}, we then obtain
\begin{align}
 &\, H^{\determ}_{\Jc_\varep,\{\bar{T}\},\{\bar{T}\}}[1](y_{e_-}) \notag \\
 =&\, \int \dy_{e_+}\, e^{\icomplex \beta \determ(y_{e_+})} 
\Jc_\varep^{\{ (e_-,e_+) \}}(y_{e_-},y_{e_+}) 
\mrm{Ker}^{\{e \}}(y_{e_-},y_{e_+}) H^{\determ}_{\Jc_\varep,\{\bar{T}\},\emptyset}\big[ -\mathscr{Y}_{\bar{T}} 1\big](y_{e_-},y_{e_+}) \notag \\
=&\,  -  \int \dy_{e_+}\, e^{\icomplex \beta \determ(y_{e_+})} \Jc_\varep^{-}(y_{e_-}-y_{e_+}) K(y_{e_-}-y_{e_+}).
\label{appendix:eq-calculation-dipole-p4}
\end{align}
After inserting this back into \eqref{appendix:eq-calculation-dipole-p3}, we obtain that
\begin{equation*}
\hat{\model}^{\determ,\varep}_{x_\ostar,\{\bar{T}\},\emptyset} \big[ \bar{T} \big](\psi) 
= -  \int \dy_{e_-} \dy_{e_+}\, \psi(y_{e_-}) e^{\icomplex \beta (\determ(y_{e_+})-\determ(y_{e_-}))} \Jc_\varep^{-}(y_{e_-}-y_{e_+}) K(y_{e_-}-y_{e_+}),
\end{equation*}
which corresponds to the third summand in \eqref{appendix:eq-calculation-model-dipole}. \\ 

\emph{Case 2: The tripole from \eqref{appendix:eq-calculation-model-vtripole}.} In the following, we label the edges and nodes of the tripole as in 
\begin{equation*}
\vtripolempplabeled.
\end{equation*}
We also write $S^{1}$ and $S^{2}$ for the sub-trees with the sole edges $e^1$ and $e^2$, respectively. To simplify the notation, we now write $\bar{T}$ for the tripole in \eqref{appendix:eq-calculation-model-vtripole}. Since $4\pi \leq \beta^2 <6\pi$, it follows from \cite[Definition~2.17]{CHS2018} that $\gamma(e^1)=\gamma(e^2)=1$ and $\mathfrak{C}=\{ e^1,e^2 \}$. As a result, the sum in \eqref{appendix:eq-model-RHS} ranges over 
\begin{equation}\label{appendix:eq-calculation-vtripole-options}
\begin{aligned}
\big( \mc{G}, \mathscr{D} \big) \in 
\Big \{ &\big( \emptyset, \emptyset\big), 
\big( \emptyset, \big\{ e^1 \big\} \big),
\big( \emptyset, \big\{ e^2 \big\} \big),
\big( \emptyset, \big\{ e^1,e^2 \big\} \big), \\
&\big( \big\{ S^1 \big\}, \emptyset \big),
\big( \big\{ S^1 \big\}, \big\{ e^2 \big\} \big),
\big( \big\{ S^2 \big\}, \emptyset \big),
\big( \big\{ S^2 \big\}, \big\{ e^1 \big\} \big) \Big\}
\end{aligned}
\end{equation}
The different sets of edges, nodes, and noise nodes for the forests in \eqref{appendix:eq-calculation-vtripole-options} have been listed in Figure~\ref{figure:dipole-forests}. As before, we
 write $\hat{\model}^{\determ, \varep}_{x_\ostar,\mc{G},\mathscr{D}}[\bar{T}](\psi)$ 
for the summands on the right-hand side of~\eqref{appendix:eq-model-RHS} and compute them using the information from Figure~\ref{figure:dipole-forests} and Figure~\ref{figure:dipole-tree}. \\

\emph{Case 2.a: $\mc{G}=\emptyset$.} Using the third row of Figure \ref{figure:dipole-forests} and $H^{\determ}_{\Jc_\varep,\mc{F},\emptyset}=\operatorname{Id}$ for all forests $\mc{F}$, we obtain that 
\begin{align}
&\hspace{3ex} \hat{\model}^{\determ, \varep}_{x_\ostar,\emptyset,\mathscr{D}}[\bar{T}](\psi) \notag \\
&= \int \dyroot \dy_{e^1_+} \dy_{e^2_+} \, \psi(\yroot) \xi^{\varep}_{-}(\yroot) \xi^{\varep}_{+}(y_{e^1_+})
\xi^{\varep}_{+}(y_{e^2_+}) 
e^{\icomplex \beta ( - \determ(\yroot) + \determ(y_{e^1_+})+\determ(y_{e^2_+}))} \notag \\
&\hspace{3ex}  \times \mrm{Ker}^{\{e^1,e^2\}\backslash \mathscr{D}}(\yroot,y_{e^1_+},y_{e^2_+}) \,
\mrm{RKer}^{\mathscr{D}}(x_{\ostar},\yroot,y_{e^1_+},y_{e^2_+}) \, 
H^{\determ}_{\Jc_\varep,\emptyset,\emptyset}[1](x_{\ostar},\yroot,y_{e^1_+},y_{e^2_+}) \notag \\
&= \int \dyroot \dy_{e^1_+} \dy_{e^2_+} \, \psi(\yroot)
\xi^{\determ,\varep}_{-}(\yroot) \xi^{\determ,\varep}_{+}(y_{e^1_+})
\xi^{\determ,\varep}_{+}(y_{e^2_+}) \label{appendix:eq-calculation-vtripole-p1} \\
&\hspace{6ex} \times \mrm{Ker}^{\{e^1,e^2\}\backslash \mathscr{D}}(\yroot,y_{e^1_+},y_{e^2_+}) \,
\mrm{RKer}^{\mathscr{D}}(x_{\ostar},\yroot,y_{e^1_+},y_{e^2_+}). \notag 
\end{align}
From \cite[(3.3) and (3.4)]{CHS2018}, it directly follows that 
\begin{align*}
&\, \sum_{\mathscr{D}\subseteq \{ e^1,e^2\}} 
\mrm{Ker}^{\{e^1,e^2\}\backslash \mathscr{D}}(\yroot,y_{e^1_+},y_{e^2_+}) \,
\mrm{RKer}^{\mathscr{D}}(x_{\ostar},\yroot,y_{e^1_+},y_{e^2_+}) \\
=& \, K(\yroot-y_{e^1_+}) K(\yroot-y_{e^2_+})
- K(x_{\ostar}-y_{e^1_+}) K(\yroot-y_{e^2_+}) 
 \\
-&\,  K(\yroot-y_{e^1_+}) K(x_{\ostar}-y_{e^2_+}) + 
K(x_{\ostar}-y_{e^1_+}) K(x_{\ostar}-y_{e^2_+}) 
\end{align*}
By inserting this into \eqref{appendix:eq-calculation-vtripole-p1} and computing the $y_{e^1_+}$ and $y_{e^2_+}$-integrals, it follows that 
\begin{align*}
 &\sum_{\mathscr{D}\subseteq \{ e^1,e^2\}} \hat{\model}^{\determ, \varep}_{x_\ostar,\emptyset,\mathscr{D}}[\bar{T}](\psi) \\
 =&\, \int \dyroot \psi(\yroot)
 \xi^{\determ,\varep}_-(\yroot) 
 \Big( \big( K \ast \xi^{\determ,\varep}_+ \big)^2(\yroot) 
 - 2 \big( K \ast \xi^{\determ,\varep}_+ \big)(x_\ostar)
 \big( K \ast \xi^{\determ,\varep}_+ \big)(\yroot) \\
 &\hspace{3ex}
 + \big( K \ast \xi^{\determ,\varep}_+ \big)^2(x_\ostar) \Big) \\
 =&\,  \int \dyroot \psi(\yroot)
 \xi^{\determ,\varep}_-(\yroot) 
 \Big( \big( K \ast \xi^{\determ,\varep}_+ \big)(\yroot) 
 - \big( K \ast \xi^{\determ,\varep}_+ \big)(x_\ostar) \Big)^2, 
\end{align*}
which corresponds to the first term in \eqref{appendix:eq-calculation-model-vtripole}. \\

\emph{Case 2.b: $\mc{G}=\{ S^1\}$.} \\
Using the fourth row of Figure \ref{figure:dipole-forests}, we obtain that 
\begin{equation}\label{appendix:eq-calculation-vtripole-p2}
\begin{aligned}
 \hat{\model}^{\determ, \varep}_{x_\ostar,\{ S^1 \},\mathscr{D}}[\bar{T}](\psi)  
 =&\, \int \dyroot \dy_{e^2_+} \, \psi(\yroot) \xi^{\varep}_{+}(y_{e^2_+}) e^{\icomplex \beta (-\determ(\yroot)+\determ(y_{e^2_+}))} \\
& \hspace{4ex}\times \, 
H^{\determ}_{\Jc_\varep,\{S^1\},\{S^1\}}\Big[ \mrm{RKer}^{\mathscr{D}} \cdot  \mrm{Ker}^{\{e^2\}\backslash \mathscr{D}}  \Big]
(x_{\ostar},\yroot,y_{e^2_+}). 
\end{aligned}
\end{equation}
Using a similar argument as in the derivation of \eqref{appendix:eq-calculation-dipole-p4}, we also have that 
\begin{equation}\label{appendix:eq-calculation-vtripole-p3}
\begin{aligned}
&\,H^{\determ}_{\Jc_\varep,\{S^1\},\{S^1\}}\Big[ \mrm{RKer}^{\mathscr{D}} \cdot  \mrm{Ker}^{\{e^2\}\backslash \mathscr{D}}  \Big](x_{\ostar},\yroot,y_{e^2_+}) \\
=&\,  -  \bigg( \int \dy_{e^1_+}\, e^{\icomplex \beta \determ(y_{e^1_+})} \Jc^{-}_\varep(\yroot-y_{e^1_+}) K(\yroot-y_{e^1_+}) \bigg)
 \mrm{RKer}^{\mathscr{D}}(x_\ostar,\yroot,y_{e^2_+})  \\
 &\hspace{2ex} \times \mrm{Ker}^{\{e^2\}\backslash \mathscr{D}}(\yroot,y_{e^2_+}). 
\end{aligned}
\end{equation}
We emphasize that the reason why the $ \mrm{RKer}^{\mathscr{D}}\cdot\mrm{Ker}^{\{e^2\}\backslash \mathscr{D}}$-term in \eqref{appendix:eq-calculation-vtripole-p3} can be pulled out of the $y_{e^1_+}$-integral is that the sole edge $e^2$ in $K^{\downarrow}(\bar{\mc{G}})$ does not contain the node $e^1_+$.  From \cite[(3.3) and (3.4)]{CHS2018}, it directly follows that 
\begin{equation}\label{appendix:eq-calculation-vtripole-p4}
 \sum_{\mathscr{D}\subseteq \{e^2\}} 
\mrm{RKer}^{\mathscr{D}}(x_\ostar,\yroot,y_{e^2_+}) \mrm{Ker}^{\{e^2\}\backslash \mathscr{D}}(\yroot,y_{e^2_+}) 
= K(\yroot- y_{e^2_+})
-  K(x_\ostar- y_{e^2_+}).
\end{equation}

By inserting \eqref{appendix:eq-calculation-vtripole-p3} and \eqref{appendix:eq-calculation-vtripole-p4} into \eqref{appendix:eq-calculation-vtripole-p2}, we obtain that 
\begin{align*}
&\, \sum_{\substack{\mathscr{D}\subseteq \{e^2 \}}}  \hat{\model}^{\determ, \varep}_{x_\ostar,\{ S^1 \},\mathscr{D}}[\bar{T}](\psi) \\
=&\, - \int \dyroot \Bigg( \psi(\yroot) \bigg( \int  \dy_{e^1_+}  e^{\icomplex \beta (- \determ(\yroot)+\determ(y_{e^1_+}))}
 \Jc^{-}_\varep(\yroot-y_{e^1_+}) K(\yroot-y_{e^1_+}) \bigg) \\
 &\hspace{4ex} \times 
 \bigg( \int \dy_{e^2_+} 
  \big( K(\yroot- y_{e^2_+})
-  K(x_\ostar- y_{e^2_+}) \big)
\xi^{\determ,\varep}_+(y_{e^2_+}) \bigg) \Bigg) \\
=&\, - \int \dyroot  \psi(\yroot) \, 
\E \Big[ \xi^{\determ,\varep}_- \big( K \ast \xi^{\determ,\varep}_+ \big) \Big](\yroot) \, \Big( \big( K \ast \xi^{\determ,\varep}_+ \big)(\yroot) - \big( K \ast \xi^{\determ,\varep}_+ \big)(x_\ostar) \Big), 
\end{align*}
which corresponds to half of the second summand in \eqref{appendix:eq-calculation-model-vtripole}.\\

\emph{Case 2.c: $\mc{G}=\{S^2\}$.} By symmetry, the case $\mc{G}=S^2$ yields the same contribution as the case $\mc{G}=S^1$, which we treated above.  \\

\emph{Case 3: The tripole from \eqref{appendix:eq-calculation-model-ltripole}.} In the following, we label the edges and nodes of the tripole as in 
\begin{equation*}
\ltripolepmplabeled
\end{equation*}
We also write $S^{1}$ and $S^{2}$ for the sub-trees with the sole edges $e^1$ and $e^2$, respectively. To simplify the notation, we now write $\bar{T}$ for the tripole in \eqref{appendix:eq-calculation-model-ltripole}. Since $4\pi \leq \beta^2 <6\pi$, it follows from \cite[Definition~2.17]{CHS2018} that $\gamma(e^1) = 2$, $\gamma(e^2) = 1$, and $\mathfrak{C}=\{ e^1,e^2 \}$. As a result, the sum in \eqref{appendix:eq-model-RHS} ranges over 
\begin{equation}\label{appendix:eq-calculation-ltripole-options}
\begin{aligned}
\big( \mc{G}, \mathscr{D} \big) \in 
\Big \{ &\big( \emptyset, \emptyset\big), 
\big( \emptyset, \big\{ e^1 \big\} \big),
\big( \emptyset, \big\{ e^2 \big\} \big),
\big( \emptyset, \big\{ e^1,e^2 \big\} \big), \\
&\big( \big\{ S^1 \big\}, \emptyset \big),
\big( \big\{ S^1 \big\}, \big\{ e^2 \big\} \big),
\big( \big\{ S^2 \big\}, \emptyset \big),
\big( \big\{ S^2 \big\}, \big\{ e^1 \big\} \big) \Big\}
\end{aligned}
\end{equation}
The different sets of edges, nodes, and noise nodes for the forests in \eqref{appendix:eq-calculation-ltripole-options} have been listed in Figure~\ref{figure:dipole-forests}. As before, we
 write $\hat{\model}^{\determ, \varep}_{x_\ostar,\mc{G},\mathscr{D}}[\bar{T}](\psi)$ 
for the summands on the right-hand side of~\eqref{appendix:eq-model-RHS} and compute them using the information from Figure~\ref{figure:dipole-forests} and Figure~\ref{figure:dipole-tree}. \\

\emph{Case 3.a: $\mc{G}=\emptyset$.} Using the sixth row of Figure \ref{figure:dipole-forests} and $H^{\determ}_{\Jc_\varep,\mc{F},\emptyset}=\operatorname{Id}$ for all forests $\mc{F}$, we obtain that 
\begin{align}
&\hspace{3ex} \hat{\model}^{\determ, \varep}_{x_\ostar,\emptyset,\mathscr{D}}[\bar{T}](\psi) \notag \\
&= \int \dyroot \dy_{e^1_-} \dy_{e^2_+} \, \psi(\yroot) \xi^{\varep}_{+}(\yroot) \xi^{\varep}_{-}(y_{e^1_-})
\xi^{\varep}_{+}(y_{e^2_+}) 
e^{\icomplex \beta ( \determ(\yroot) - \determ(y_{e^1_-})+\determ(y_{e^2_+}))} \notag \\
&\hspace{4ex}  \times \mrm{Ker}^{\{e^1,e^2\}\backslash \mathscr{D}}(\yroot,y_{e^1_-},y_{e^2_+}) \,
\mrm{RKer}^{\mathscr{D}}(x_{\ostar},\yroot,y_{e^1_-},y_{e^2_+}) \, 
H^{\determ}_{\Jc_\varep,\emptyset,\emptyset}[1](\yroot,y_{e^1_-},y_{e^2_+}) \notag \\
&= \int \dyroot \dy_{e^1_-} \dy_{e^2_+} \, \psi(\yroot)
\xi^{\determ,\varep}_{+}(\yroot) \xi^{\determ,\varep}_{-}(y_{e^1_-})
\xi^{\determ,\varep}_{+}(y_{e^2_+}) \label{appendix:eq-calculation-ltripole-p1} \\
&\hspace{4ex} \times \mrm{Ker}^{\{e^1,e^2\}\backslash \mathscr{D}}(\yroot,y_{e^1_-},y_{e^2_+}) \,
\mrm{RKer}^{\mathscr{D}}(x_{\ostar},\yroot,y_{e^1_-},y_{e^2_+}). \notag 
\end{align}
From \cite[(3.3) and (3.4)]{CHS2018}, and recalling that $\gamma(e^1) = 2$, $\gamma(e^2) = 1$, it directly follows that 
\begin{align*}
&\hspace{3mm} \sum_{\mathscr{D}\subseteq \{ e^1,e^2\}} 
\mrm{Ker}^{\{e^1,e^2\}\backslash \mathscr{D}}(\yroot,y_{e^1_-},y_{e^2_+}) \,
\mrm{RKer}^{\mathscr{D}}(x_{\ostar},\yroot,y_{e^1_-},y_{e^2_+}) \\
&= K(\yroot-y_{e^1_-}) K(y_{e^1_-}-y_{e^2_+})
- K(\yroot - y_{e^1_-}) K(x_{\ostar}-y_{e^2_+}) \\
&\quad- \bigg(K(x_{\ostar} - y_{e^1_-}) + \sum_{|k|_{\mathfrak{s}} = 1} (\yroot - x_{\ostar}) D^k K(x_{\ostar} - y_{e^1_-})\bigg) K(y_{e^1_-} - y_{e^2_+}) \\
&\quad+ \bigg(K(x_{\ostar} - y_{e^1_-}) + \sum_{|k|_{\mathfrak{s}} = 1} (\yroot - x_{\ostar}) D^k K(x_{\ostar} - y_{e^1_-})\bigg) K(x_{\ostar} - y_{e^2_+}) \\
&= K_1 - K_2 - K_3 + K_4.
\end{align*}
By inserting this into \eqref{appendix:eq-calculation-ltripole-p1} and computing the $y_{e^1_-}$ and $y_{e^2_+}$-integrals, it follows that 
\begin{align*}
 &\hspace{4mm}\sum_{\mathscr{D}\subseteq \{ e^1,e^2\}} \hat{\model}^{\determ, \varep}_{x_\ostar,\emptyset,\mathscr{D}}[\bar{T}](\psi) \\
 &=\, \int \dyroot \dy_{e^1_-} \dy_{e^2_+} \, \psi(\yroot)
\xi^{\determ,\varep}_{+}(\yroot) \xi^{\determ,\varep}_{-}(y_{e^1_-})
\xi^{\determ,\varep}_{+}(y_{e^2_+}) (K_1 - K_2 - K_3 + K_4) \\
&=:\, I_1 - I_2 - I_3 + I_4.
\end{align*}
In order to obtain the formula in \eqref{appendix:eq-calculation-model-ltripole}--\eqref{appendix:eq-calculation-model-ltripole-end}, we will need to combine these terms with ones obtained later. This will be done at the end. \\

\emph{Case 3.b: $\mc{G}=\{ S^1\}$.} Using the seventh row of Figure \ref{figure:dipole-forests}, and noting that $K(\mc{G}, \bar{T}) = K(\bar{T}) \backslash \bar{K}^{\downarrow}(\mc{G}) = \varnothing$, we obtain that
\begin{equation}\label{appendix:eq-calculation-ltripole-p2}
\begin{aligned}
 \hat{\model}^{\determ, \varep}_{x_\ostar,\{ S^1 \},\mathscr{D}}[\bar{T}](\psi)  
 =&\, \int \dyroot \dy_{e^2_+} \, \psi(\yroot) \xi^{\varep}_{+}(y_{e^2_+}) e^{\icomplex \beta (\determ(\yroot)+\determ(y_{e^2_+}))} \\
 &~\times H^{\determ}_{\Jc_\varep,\{S^1\},\{S^1\}}\Big[\mrm{RKer}^{\{e_2\} \cap \mathscr{D}} \mrm{Ker}^{\{e_2\} \backslash \mathscr{D}}\Big](x_{\ostar},\yroot,y_{e^2_+}). 
\end{aligned}
\end{equation}
Using the recursive definition \eqref{eq:H-determ} of $H^{\determ}_{\Jc_\varep, \{S^1\}, \{S^1\}}$, we have that
\begin{equation}\label{appendix:eq-calculation-ltripole-p3}
\begin{aligned}
&\hspace{4mm} H^{\determ}_{\Jc_\varep,\{S^1\},\{S^1\}}\big[ \mrm{Ker}^{\{e_2\}}\big](x_{\ostar},\yroot, y_{e^2_+})
\\
&= -  \int \dy_{e^1_-}\, e^{-\icomplex \beta \determ(y_{e^1_-})} \Jc_\varep^{-}(\yroot-y_{e^1_-}) K(\yroot-y_{e^1_-}) (\mathscr{Y}_{S^1} \mrm{Ker}^{\{e_2\}})(\yroot, y_{e^1_-}, y_{e^2_+}) \\
&= - K(\yroot - y_{e^2_+})\int \dy_{e^1_-}\, e^{-\icomplex \beta \determ(y_{e^1_-})} \Jc_\varep^{-}(\yroot-y_{e^1_-}) K(\yroot-y_{e^1_-})  .
\end{aligned}
\end{equation}
Similarly, we have that
\begin{equation}\label{appendix:eq-calculation-ltripole-p4}
\begin{aligned}
&\, H^{\determ}_{\Jc_\varep,\{S^1\},\{S^1\}}\big[ \mrm{RKer}^{\{e_2\}}\big](x_{\ostar},\yroot, y_{e^2_+})\\
=&\, K(x_{\ostar} - y_{e^2_+}) \int \dy_{e^1_-} \, e^{-\icomplex \beta \determ(y_{e^1_-})} \Jc_\varep^{-}(\yroot-y_{e^1_-}) K(\yroot-y_{e^1_-}).
\end{aligned}
\end{equation}
By inserting \eqref{appendix:eq-calculation-ltripole-p3} and \eqref{appendix:eq-calculation-ltripole-p4} into \eqref{appendix:eq-calculation-ltripole-p2}, we obtain that 
\begin{align*}
&\hspace{4mm} \sum_{\substack{\mathscr{D}\subseteq \{e^2 \}}}  \hat{\model}^{\determ, \varep}_{x_\ostar,\{ S^1 \},\mathscr{D}}[\bar{T}](\psi) \\
&=\, -\int \dyroot \dy_{e_2^+} \psi(\yroot) \xi^{\determ,\varep}_+(y_{e^2_+})     
  \big( K(\yroot- y_{e^2_+})
-  K(x_\ostar- y_{e^2_+}) \big)
\\
 &\hspace{4ex} \times \bigg( \int  \dy_{e^1_-}  e^{\icomplex \beta (\determ(\yroot)-\determ(y_{e^1_-}))}
 \Jc_\varep^{-}(\yroot-y_{e^1_-}) K(\yroot-y_{e^1_-}) \bigg)\\
&= -\int \dyroot  \psi(\yroot) \, 
\E \Big[ \xi^{\determ,\varep}_+ \big( K \ast \xi^{\determ,\varep}_- \big) \Big](\yroot) \, \Big( \big( K \ast \xi^{\determ,\varep}_+ \big)(\yroot) - \big( K \ast \xi^{\determ,\varep}_+ \big)(x_\ostar) \Big). \\
&=: -(I_5 - I_6),
\end{align*} 
where $I_5$ is the integral containing the $K \ast \xi^{\determ, \varep}_+(\yroot)$ term, and $I_6$ is the integral containing the $K \ast \xi^{\determ, \varep}_+(x_{\ostar})$ term.
\\

\emph{Case 3.c: $\mc{G}=\{S^2\}$.} 
Using the eighth row of Figure \ref{figure:dipole-forests}, we obtain that
\begin{equation}\label{appendix:eq-calculation-ltripole-p10}
\begin{aligned}
 &\, \hat{\model}^{\determ, \varep}_{x_\ostar,\{ S^2 \},\mathscr{D}}[\bar{T}](\psi)  \\ 
 =&\, \int \dyroot \dy_{e^1_-} \, \psi(\yroot) \xi^{\varep}_{+}(\yroot) e^{\icomplex \beta (\determ(\yroot)-\determ(y_{e^1_-}))}
 \mrm{Ker}^{\{e^1\}\backslash \mathscr{D}}(\yroot,y_{e^1_-}) \\
& \hspace{4ex}\times 
\mrm{RKer}^{\mathscr{D}}(x_\ostar,\yroot,y_{e^1_-})\,
H^{\determ}_{\Jc_\varep,\{S^2\},\{S^2\}}[1](x_{\ostar},\yroot,y_{e^1_-}, y_{e^2_+}). 
\end{aligned}
\end{equation}
From \cite[(3.3) and (3.4)]{CHS2018} (and recalling that $\gamma(e^1) = 2$), it directly follows that
\begin{equation}\label{appendix:eq-calculation-ltripole-p11}
\begin{aligned}
&\sum_{\mc{D}\sse \{e^1\}} \mrm{Ker}^{\{e^1\}\backslash \mathscr{D}}(\yroot,y_{e^1_-}) 
\mrm{RKer}^{\mathscr{D}}(x_\ostar,\yroot,y_{e^1_-}) \\
&= K(\yroot - y_{e^1_-}) - K(x_{\ostar} - y_{e^1_-}) - \sum_{|k|_{\mathfrak{s}} = 1} (\yroot - x_{\ostar})^k D^k K(x_{\ostar} - y_{e^1_-}).
\end{aligned}
\end{equation}
Similar to \eqref{appendix:eq-calculation-ltripole-p4}, we have that
\begin{equation}\label{appendix:eq-calculation-ltripole-p12}
\begin{aligned}
&\,H^{\determ}_{\Jc_\varep,\{S^2\},\{S^2\}}[1](x_{\ostar},\yroot,y_{e^1_-},y_{e^2_+}) \\
=&\, -  \int \dy_{e^2_+}\, e^{\icomplex \beta \determ(y_{e^2_+})} \Jc_\varep^-(y_{e^1_-}-y_{e^2_+}) K(y_{e^1_-}-y_{e^2_+}).
\end{aligned}
\end{equation}
Inserting \eqref{appendix:eq-calculation-ltripole-p11} and \eqref{appendix:eq-calculation-ltripole-p12} into \eqref{appendix:eq-calculation-ltripole-p10}, we obtain that 
\begin{align*}
&\hspace{4mm}\sum_{\mathscr{D} \sse \{e^1\}} \hat{\model}^{\determ, \varep}_{x_\ostar,\{ S^2 \},\mathscr{D}}[\bar{T}](\psi) \\
&=\, -\int \dyroot \dy_{e^1_-} \psi(\yroot) \xi^{\determ, \varep}_+(\yroot)  \\
&\quad \times \bigg(K(\yroot - y_{e^1_-}) - K(x_{\ostar} - y_{e^1_-}) -  \sum_{|k|_{\mathfrak{s}} = 1} (\yroot - x_{\ostar})^k D^k K(x_{\ostar} - y_{e^1_-}) \bigg)  \\
&\quad \times \bigg(\int dy_{e^2_+} e^{\icomplex \beta (-\determ(y_{e^1_-}) + \determ(y_{e^2_+}))} \Jc_\varep^-(y_{e^1_-} - y_{e^2_+}) K(y_{e^1_-} - y_{e^2_+}) \bigg)\\
&=\, - \int \dyroot \psi(\yroot) \xi^{\determ, \varep}_+(\yroot) \bigg(K \ast \Big(\E\Big[ \xi^{\determ, \varep}_- (K \ast \xi^{\determ, \varep}_+)\Big]\Big)(\yroot)  \\
&\hspace{3.5ex} - K \ast \Big(\E\Big[ \xi^{\determ, \varep}_- (K \ast \xi^{\determ, \varep}_+)\Big]\Big)(x_{\ostar}) - \sum_{|k|_{\mathfrak{s}}=1} (\yroot - x_{\ostar})^k D^k K \ast \E\Big[ \xi^{\determ, \varep}_- (K \ast \xi^{\determ, \varep}_+)\Big]\Big)(x_{\ostar})\bigg) \\
&=:\, - (I_7 - I_8 - I_9),
\end{align*}
where $I_7$ is the integral containing the $K \ast \Big(\E\Big[ \xi^{\determ, \varep}_- (K \ast \xi^{\determ, \varep}_+)\Big]\Big)(\yroot)$ term, $I_8$ is the integral containing the $K \ast \Big(\E\Big[ \xi^{\determ, \varep}_- (K \ast \xi^{\determ, \varep}_+)\Big]\Big)(x_{\ostar})$ term, and $I_9$ is the integral containing the $\sum_{|k|_{\mathfrak{s}}=1}\cdots$ term.

To finish, we indicate how to combine $I_1, \ldots, I_9$ to obtain the terms in \eqref{appendix:eq-calculation-model-ltripole}-\eqref{appendix:eq-calculation-model-ltripole-end}. We have that 
\begin{equs}
I_1 - I_2 - I_7 &= \int \dyroot \psi(\yroot) \xi^{\determ, \varep}_+(\yroot) K \ast \bigg( \model^{\determ, \varep}_{x_\ostar}\bigg[\,  \dipmp\, \bigg]\bigg)(\yroot), \\
-I_3 + I_4 + I_8 + I_9 &= - \int \dyroot \psi(\yroot) \xi^{\determ, \varep}_+(\yroot) \bigg(K \ast \bigg( \model^{\determ, \varep}_{x_\ostar}\bigg[\,  \dipmp\, \bigg]\bigg)(x_{\ostar})  \\
&\quad \quad + \sum_{|k|_{\mathfrak{s} = 1}} (\yroot - x_{\ostar})^k D^k K \ast \bigg( \model^{\determ, \varep}_{x_\ostar}\bigg[\,  \dipmp\, \bigg]\bigg)(x_{\ostar})\bigg), 
\end{equs}
and 
\begin{equs}
-(I_5 - I_6) &= -\int \dyroot \psi(\yroot) \E\Big[\xi^{\determ, \varep}_+ K \ast \xi^{\determ, \varep}_-\Big](\yroot) \Big((K \ast \xi^{\determ, \varep}_+)(\yroot) - (K \ast \xi^{\determ, \varep}_+)(x_{\ostar})\Big).
\end{equs}
The desired result now follows.
\end{proof} 

Using \eqref{appendix:eq-model}, we can show the following result about our modified model, which was needed in Section \ref{section:remainder-bound} when computing the reconstruction of the lifted equation in the proof Proposition~\ref{prop:reconstruction}.

\begin{corollary}\label{cor:reconstruction-calculation}
For any $x_{\ostar} \in (\R \times \T^2)^{\ostar}$, we have that 
\begin{equs}
\model^{\determ, \varep}_{x_\ostar}\bigg[\,  \dipmp\, \bigg](x_{\ostar}) &= \ovl{\model^{\determ, \varep}_{x_\ostar}\bigg[\,  \dippm\, \bigg](x_{\ostar})} = -\int \dzprime K(x_{\ostar} - \zprime) \Jc_\varep^-(x_{\ostar} - z') e^{-\icomplex \beta (\determ(x_{\ostar}) - \determ(z'))}, \\
\model^{\determ, \varep}_{x_\ostar}\bigg[\,  \dippp\, \bigg](x_{\ostar}) &= \model^{\determ, \varep}_{x_\ostar}\bigg[\,  \dipmm\, \bigg](x_{\ostar})  = 0, \\
\big(\model^{\determ, \varep}_{x_\ostar} \tau \big)(x_{\ostar}) &= 0 \text{ for any tripole $\tau$.}
\end{equs}
\end{corollary}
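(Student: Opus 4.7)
The strategy is to plug $x = x_\ostar$ into the explicit formulas for the modified model worked out in Lemma~\ref{appendix:lem-premodel-to-model}, and exploit the generic feature that after recentering, every summand contains a factor of the form $K \ast (\cdot) - (K\ast(\cdot))(x_\ostar)$ or a monomial $(\cdot - x_\ostar)^k$, which vanishes at $x_\ostar$. The only terms that can survive are the pure expectation terms (which arise for charged dipoles but not for neutral ones).

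For the charged dipole $\dipmp$, formula~\eqref{appendix:eq-premodel-to-model-dipole} evaluated at $x_\ostar$ yields the cancellation $\xi^{\determ,\varep}_-(x_\ostar)(K \ast \xi^{\determ,\varep}_+)(x_\ostar) - \xi^{\determ,\varep}_-(x_\ostar)(K\ast \xi^{\determ,\varep}_+)(x_\ostar) = 0$, leaving only $-\E[\xi^{\determ,\varep}_- (K \ast \xi^{\determ,\varep}_+)](x_\ostar)$. Using $\xi^{\determ,\varep}_\pm = e^{\pm i\beta \determ}\xi^\varep_\pm$, independence of $\determ$-factors from the noise, and the covariance identity $\E[\xi^\varep_-(x_\ostar) \xi^\varep_+(z')] = \Jc_\varep^-(x_\ostar - z')$ from~\eqref{eq:Jc-covariance}, this rewrites as the claimed integral. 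The conjugate relation follows from $\overline{K} = K$ and $\overline{\xi^{\determ,\varep}_+} = \xi^{\determ,\varep}_-$, which give $\overline{\model^{\determ,\varep}_{x_\ostar}[\dippm](x_\ostar)} = \model^{\determ,\varep}_{x_\ostar}[\dipmp](x_\ostar)$.

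For the neutral dipoles $\dippp$ and $\dipmm$, an analog of~\eqref{appendix:eq-premodel-to-model-dipole} (derivable by exactly the same steps as Lemma~\ref{appendix:lem-premodel-to-model} but without the expectation-subtraction, since $\premodel^{\determ,\varep}\dippp := \xi^{\determ,\varep}_+(K \ast \xi^{\determ,\varep}_+)$ is not renormalized) gives $\model^{\determ,\varep}_{x_\ostar}[\dippp] = \xi^{\determ,\varep}_+\bigl(K \ast \xi^{\determ,\varep}_+ - (K \ast \xi^{\determ,\varep}_+)(x_\ostar)\bigr)$, which evaluated at $x_\ostar$ is identically zero.

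For the tripoles, we look at the two representative formulas: the v-tripole formula~\eqref{appendix:eq-premodel-to-model-vtripole} evaluated at $x_\ostar$ vanishes termwise since every summand carries a factor $(K\ast \xi^{\determ,\varep}_+ - (K\ast \xi^{\determ,\varep}_+)(x_\ostar))$ (to the first or second power), and similarly the l-tripole formula~\eqref{appendix:eq-premodel-to-model-ltripole-begin}--\eqref{appendix:eq-premodel-to-model-ltripole-end} vanishes at $x_\ostar$ because each of its three summands contains either a factor of the form $K\ast(\cdot) - (K\ast(\cdot))(x_\ostar)$ or the monomial $(\cdot - x_\ostar)^k$ with $|k|_{\mathfrak{s}}=1$. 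The remaining tripoles are handled by the symmetry $+ \leftrightarrow -$ noted in Section~\ref{section:modified-model} (half of the trees determine the other half by flipping all signs), which yields analogous formulas with the same structural property that every term vanishes at the base point. There is no genuine obstacle here; the content of the corollary is really just this structural observation, and the only slightly delicate point is confirming that the expectation terms which do not carry a $(K\ast (\cdot) - (K\ast (\cdot))(x_\ostar))$ factor in the v-tripole and l-tripole formulas \emph{are} always multiplied by such a factor (which a direct inspection of~\eqref{appendix:eq-premodel-to-model-vtripole} and \eqref{appendix:eq-premodel-to-model-ltripole-begin}--\eqref{appendix:eq-premodel-to-model-ltripole-end} confirms).
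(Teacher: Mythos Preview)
Your approach is correct: you evaluate the explicit recentered formulas from Lemma~\ref{appendix:lem-premodel-to-model} at the basepoint and observe that every summand carries a factor of the form $K\ast(\cdot) - (K\ast(\cdot))(x_\ostar)$ or $(\cdot - x_\ostar)^k$, so only the bare expectation term in the charged dipole survives. One small point: the sign-flip symmetry alone only hands you the mirrors of the two tripoles you treat explicitly, so four of the sixteen tripoles are covered; the remaining twelve require their own explicit model formulas, which you are tacitly asserting share the same vanishing-at-basepoint structure. This is true (and the paper itself presents Lemma~\ref{appendix:lem-premodel-to-model} as ``representative''), but it is more than symmetry that is being invoked here.

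The paper takes a different, more unified route for most of the tripoles. Rather than deriving and inspecting explicit formulas case by case, it appeals to the general summation formula~\eqref{appendix:eq-model} from Proposition~\ref{appendix:prop-model-and-moments}: for every dipole or tripole outside the short list $\{\dipmp,\dippm,\ltripolepmp,\ltripolepmm,\ltripolempm,\ltripolempp\}$, the operator $H^\determ_{\mc J,\mc G,\bar{\mc G}}$ effectively acts on the constant $1$, and then for each fixed forest $\mc G$ the sum $\sum_{\mathscr D}\mrm{Ker}^{K(\mc G,\bar T)\setminus\mathscr D}\,\mrm{RKer}^{K(\mc G,\bar T)\cap\mathscr D}$ telescopes to zero once $y_{\varrho_{\bar T}} = x_\ostar$. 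This dispatches the neutral dipoles and twelve of the sixteen tripoles in one stroke; only the four exceptional line tripoles (where the argument of $H^\determ$ genuinely depends on the collapsed variable) need the explicit computation you carry out. Your route is more elementary in that it never touches the heavy notation of~\eqref{appendix:eq-model}, but at the cost of a longer, largely implicit case analysis; the paper trades that for a single structural observation about the general formula.
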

\begin{proof}
The first identity follows directly from \eqref{appendix:eq-premodel-to-model-dipole} and \eqref{eq:Jc-covariance}. For the second two identities, we first observe that for $\tau$ which is a dipole or tripole which moreover is {\it not} one of the following trees:
\begin{equs}
\dipmp, ~~ \dippm, ~~ \ltripolepmp, ~~ \ltripolepmm, ~~ \ltripolempm, ~~ \ltripolempp,
\end{equs}
one can see that $\model^{\determ, \varep}_{x_\ostar}[\tau](x_{\ostar}) = 0$ quite directly from the general formula \eqref{appendix:eq-model}. Indeed, for such a $\tau$, one has that $K^{\downarrow}(\bar{\mc{G}}) = \varnothing$ for any $\mc{G} \in \mbb{F}_j$, and thus the function inside the $H^{\determ}_{\mc{J}, \mc{G}, \bar{\mc{G}}}$ operator is always just 1 (note that the tree $\tau$ is assumed to have zero polynomial decoration). Thus for any fixed $\mc{G} \in \mbb{F}_j$, the summation over $\mathscr{D} \sse \mathfrak{C} \backslash K(\mc{G})$ in \eqref{appendix:eq-model} can be written 
\begin{equs}
&\sum_{\mathscr{D} \sse \mathfrak{C} \backslash K(\mc{G})} \int_{N(\mc{G}, \bar{T})} \dy  \xi^{L(\mc{G}, \bar{T}), \varep}(y) \prod_{u \in N(\mc{G}, \bar{T})} e^{\icomplex \beta \nodelabel(u) \determ(y_u)} \cdot \mrm{Ker}^{K(\mc{G}, \bar{T}) \backslash \mathscr{D}}(y) \cdot \psi(z_{\varrho_{\bar{T}}})  \\
&\quad \quad \cdot   \mrm{RKer}^{K(\mc{G}, \bar{T}) \cap \mathscr{D}}(z) \cdot X_{\bar{\mathfrak{n}}, \ostar}^{N(\mc{G}, \bar{T})}(z) \cdot H_{\mc{J}, \mc{G}, \bar{\mc{G}}}^\determ \Big[ \mrm{RKer}^{K^{\downarrow}(\bar{\mc{G}}) \cap \mathscr{D}} \cdot \mrm{Ker}^{K^{\downarrow}(\bar{\mc{G}}) \backslash \mathscr{D}} X_{\bar{\mathfrak{n}}, \ostar}^{\tilde{N}(\bar{\mc{G}})}\Big](z) \\
&= \int_{N(\mc{G}, \bar{T})} \dy  \xi^{L(\mc{G}, \bar{T}), \varep}(y) \prod_{u \in N(\mc{G}, \bar{T})} e^{\icomplex \beta \nodelabel(u) \determ (y_u)} \cdot \psi(z_{\varrho_{\bar{T}}}) H_{\mc{J}, \mc{G}, \bar{\mc{G}}}^\determ [1](z) \\
&\quad \quad \quad \quad \quad \quad \quad \quad \times\sum_{\mathscr{D} \sse \mathfrak{C} \backslash K(\mc{G})} \mrm{Ker}^{K(\mc{G}, \bar{T}) \backslash \mathscr{D}}(y) \cdot  \mrm{RKer}^{K(\mc{G}, \bar{T}) \cap \mathscr{D}}(z).
\end{equs}
The claim that $\model^{\determ, \varep}_{x_\ostar}[\tau](x_{\ostar}) = 0$ then follows because
\begin{equs}
\sum_{\mathscr{D} \sse \mathfrak{C} \backslash K(\mc{G})} \mrm{Ker}^{K(\mc{G}, \bar{T}) \backslash \mathscr{D}}(y) \cdot  \mrm{RKer}^{K(\mc{G}, \bar{T}) \cap \mathscr{D}}(z) = 0 \quad \text{ if $\yroot = x_{\ostar}$.}
\end{equs}
To finish the proof, it remains to address the cases where $\tau$ is a tripole of the form
\begin{equs}
\ltripolepmp, ~~ \ltripolepmm, ~~ \ltripolempm, ~~ \ltripolempp.
\end{equs}
The cases where $\tau$ is the first or third tripole above follow by the explicit calculation \eqref{appendix:eq-premodel-to-model-ltripole-begin}-\eqref{appendix:eq-premodel-to-model-ltripole-end}. The other cases follow by a similar (and indeed, simpler) calculation, which we omit.
\end{proof}

\end{appendix}

\begin{acks}[Acknowledgments]
The authors thank Martin Hairer, Sarah-Jean Meyer, Hao Shen, and Younes Zine for helpful discussions. \revision{They also thank the anonymous referee whose comments improved the manuscript.   }
\end{acks}

\begin{funding}
 S.C. was partially supported by the NSF under Grant No. DMS-2303165. S.C. would also like to thank the IAS for its hospitality during a visit where some of the work in this paper was carried out.
\end{funding}

\bibliographystyle{imsart-nameyear} 
\bibliography{Sine-Gordon-Library-AOP}

\begin{thebibliography}{24}

\bibitem[\protect\citeauthoryear{{Bailleul} and {Bruned}}{2021}]{BB21Locality}
\begin{bmisc}[author]
\bauthor{\bsnm{{Bailleul}},~\bfnm{I.}\binits{I.}} \AND \bauthor{\bsnm{{Bruned}},~\bfnm{Y.}\binits{Y.}}
(\byear{2021}).
\btitle{{Locality for singular stochastic PDEs}}.
\bhowpublished{arXiv:2109.00399}.
\bdoi{10.48550/arXiv.2109.00399}
\end{bmisc}
\endbibitem

\bibitem[\protect\citeauthoryear{Bauerschmidt and Bodineau}{2021}]{BB21}
\begin{barticle}[author]
\bauthor{\bsnm{Bauerschmidt},~\bfnm{Roland}\binits{R.}} \AND \bauthor{\bsnm{Bodineau},~\bfnm{Thierry}\binits{T.}}
(\byear{2021}).
\btitle{Log-{S}obolev inequality for the continuum sine-{G}ordon model}.
\bjournal{Comm. Pure Appl. Math.}
\bvolume{74}
\bpages{2064--2113}.
\bdoi{10.1002/cpa.21926}
\bmrnumber{4303014}
\end{barticle}
\endbibitem

\bibitem[\protect\citeauthoryear{{Bringmann} and {Cao}}{2023}]{BC23}
\begin{bmisc}[author]
\bauthor{\bsnm{{Bringmann}},~\bfnm{Bjoern}\binits{B.}} \AND \bauthor{\bsnm{{Cao}},~\bfnm{Sky}\binits{S.}}
(\byear{2023}).
\btitle{{A para-controlled approach to the stochastic Yang-Mills equation in two dimensions}}.
\bhowpublished{arXiv:2305.07197}.
\bdoi{10.48550/arXiv.2305.07197}
\end{bmisc}
\endbibitem

\bibitem[\protect\citeauthoryear{{Bringmann} and {Cao}}{2024}]{BC24}
\begin{bmisc}[author]
\bauthor{\bsnm{{Bringmann}},~\bfnm{Bjoern}\binits{B.}} \AND \bauthor{\bsnm{{Cao}},~\bfnm{Sky}\binits{S.}}
(\byear{2024}).
\btitle{{Global well-posedness of the stochastic Abelian-Higgs equations in two dimensions}}.
\bhowpublished{arXiv:2403.16878}.
\bdoi{10.48550/arXiv.2403.16878}
\end{bmisc}
\endbibitem

\bibitem[\protect\citeauthoryear{Bruned, Hairer and Zambotti}{2019}]{BHZ19}
\begin{barticle}[author]
\bauthor{\bsnm{Bruned},~\bfnm{Y.}\binits{Y.}}, \bauthor{\bsnm{Hairer},~\bfnm{M.}\binits{M.}} \AND \bauthor{\bsnm{Zambotti},~\bfnm{L.}\binits{L.}}
(\byear{2019}).
\btitle{Algebraic renormalisation of regularity structures}.
\bjournal{Invent. Math.}
\bvolume{215}
\bpages{1039--1156}.
\bdoi{10.1007/s00222-018-0841-x}
\bmrnumber{3935036}
\end{barticle}
\endbibitem

\bibitem[\protect\citeauthoryear{Bruned et~al.}{2021}]{BCCH21}
\begin{barticle}[author]
\bauthor{\bsnm{Bruned},~\bfnm{Y.}\binits{Y.}}, \bauthor{\bsnm{Chandra},~\bfnm{A.}\binits{A.}}, \bauthor{\bsnm{Chevyrev},~\bfnm{I.}\binits{I.}} \AND \bauthor{\bsnm{Hairer},~\bfnm{M.}\binits{M.}}
(\byear{2021}).
\btitle{Renormalising {SPDE}s in regularity structures}.
\bjournal{J. Eur. Math. Soc. (JEMS)}
\bvolume{23}
\bpages{869--947}.
\bdoi{10.4171/jems/1025}
\bmrnumber{4210726}
\end{barticle}
\endbibitem

\bibitem[\protect\citeauthoryear{{Chandra}, {de Lima Feltes} and {Weber}}{2024}]{CdLFW24}
\begin{bmisc}[author]
\bauthor{\bsnm{{Chandra}},~\bfnm{Ajay}\binits{A.}}, \bauthor{\bsnm{{de Lima Feltes}},~\bfnm{Guilherme}\binits{G.}} \AND \bauthor{\bsnm{{Weber}},~\bfnm{Hendrik}\binits{H.}}
(\byear{2024}).
\btitle{{A priori bounds for 2-d generalised Parabolic Anderson Model}}.
\bhowpublished{arXiv:2402.05544}.
\bdoi{10.48550/arXiv.2402.05544}
\end{bmisc}
\endbibitem

\bibitem[\protect\citeauthoryear{{Chandra} and {Hairer}}{2016}]{CH16}
\begin{bmisc}[author]
\bauthor{\bsnm{{Chandra}},~\bfnm{Ajay}\binits{A.}} \AND \bauthor{\bsnm{{Hairer}},~\bfnm{Martin}\binits{M.}}
(\byear{2016}).
\btitle{{An analytic BPHZ theorem for regularity structures}}.
\bhowpublished{arXiv:1612.08138}.
\bdoi{10.48550/arXiv.1612.08138}
\end{bmisc}
\endbibitem

\bibitem[\protect\citeauthoryear{{Chandra}, {Hairer} and {Shen}}{2018}]{CHS2018}
\begin{bmisc}[author]
\bauthor{\bsnm{{Chandra}},~\bfnm{Ajay}\binits{A.}}, \bauthor{\bsnm{{Hairer}},~\bfnm{Martin}\binits{M.}} \AND \bauthor{\bsnm{{Shen}},~\bfnm{Hao}\binits{H.}}
(\byear{2018}).
\btitle{{The dynamical sine-Gordon model in the full subcritical regime}}.
\bdoi{10.48550/arXiv.1808.02594}
\end{bmisc}
\endbibitem

\bibitem[\protect\citeauthoryear{{Chevyrev} and {Shen}}{2023}]{CS23}
\begin{bmisc}[author]
\bauthor{\bsnm{{Chevyrev}},~\bfnm{Ilya}\binits{I.}} \AND \bauthor{\bsnm{{Shen}},~\bfnm{Hao}\binits{H.}}
(\byear{2023}).
\btitle{{Invariant measure and universality of the 2D Yang-Mills Langevin dynamic}}.
\bhowpublished{arXiv:2302.12160}.
\bdoi{10.48550/arXiv.2302.12160}
\end{bmisc}
\endbibitem

\bibitem[\protect\citeauthoryear{Da~Prato and Zabczyk}{1996}]{dPZ96}
\begin{bbook}[author]
\bauthor{\bsnm{Da~Prato},~\bfnm{G.}\binits{G.}} \AND \bauthor{\bsnm{Zabczyk},~\bfnm{J.}\binits{J.}}
(\byear{1996}).
\btitle{Ergodicity for infinite-dimensional systems}.
\bseries{London Mathematical Society Lecture Note Series}
\bvolume{229}.
\bpublisher{Cambridge University Press, Cambridge}.
\bdoi{10.1017/CBO9780511662829}
\bmrnumber{1417491}
\end{bbook}
\endbibitem

\bibitem[\protect\citeauthoryear{{Duch}}{2021}]{D21}
\begin{bmisc}[author]
\bauthor{\bsnm{{Duch}},~\bfnm{Pawe{\l}}\binits{P.}}
(\byear{2021}).
\btitle{{Flow equation approach to singular stochastic PDEs}}.
\bhowpublished{arXiv:2109.11380}.
\bdoi{10.48550/arXiv.2109.11380}
\end{bmisc}
\endbibitem

\bibitem[\protect\citeauthoryear{Gubinelli and Hofmanov{\'a}}{2021}]{GH21}
\begin{barticle}[author]
\bauthor{\bsnm{Gubinelli},~\bfnm{Massimiliano}\binits{M.}} \AND \bauthor{\bsnm{Hofmanov{\'a}},~\bfnm{Martina}\binits{M.}}
(\byear{2021}).
\btitle{A {PDE} construction of the {Euclidean} {$\Phi^4_3$} quantum field theory}.
\bjournal{Communications in Mathematical Physics}
\bvolume{384}
\bpages{1--75}.
\end{barticle}
\endbibitem

\bibitem[\protect\citeauthoryear{Gubinelli, Imkeller and Perkowski}{2015}]{GIP15}
\begin{barticle}[author]
\bauthor{\bsnm{Gubinelli},~\bfnm{Massimiliano}\binits{M.}}, \bauthor{\bsnm{Imkeller},~\bfnm{Peter}\binits{P.}} \AND \bauthor{\bsnm{Perkowski},~\bfnm{Nicolas}\binits{N.}}
(\byear{2015}).
\btitle{Paracontrolled distributions and singular {PDE}s}.
\bjournal{Forum Math. Pi}
\bvolume{3}
\bpages{e6, 75}.
\bdoi{10.1017/fmp.2015.2}
\bmrnumber{3406823}
\end{barticle}
\endbibitem

\bibitem[\protect\citeauthoryear{{Gubinelli} and {Meyer}}{2024}]{GM2024}
\begin{bmisc}[author]
\bauthor{\bsnm{{Gubinelli}},~\bfnm{Massimiliano}\binits{M.}} \AND \bauthor{\bsnm{{Meyer}},~\bfnm{Sarah-Jean}\binits{S.-J.}}
(\byear{2024}).
\btitle{{The FBSDE approach to sine-Gordon up to $6\pi$}}.
\bhowpublished{arXiv:2401.13648}.
\bdoi{10.48550/arXiv.2401.13648}
\end{bmisc}
\endbibitem

\bibitem[\protect\citeauthoryear{Hairer}{2014}]{H14}
\begin{barticle}[author]
\bauthor{\bsnm{Hairer},~\bfnm{M.}\binits{M.}}
(\byear{2014}).
\btitle{A theory of regularity structures}.
\bjournal{Invent. Math.}
\bvolume{198}
\bpages{269--504}.
\bdoi{10.1007/s00222-014-0505-4}
\bmrnumber{3274562}
\end{barticle}
\endbibitem

\bibitem[\protect\citeauthoryear{{Hairer} and {Rosati}}{2023}]{HR23}
\begin{bmisc}[author]
\bauthor{\bsnm{{Hairer}},~\bfnm{Martin}\binits{M.}} \AND \bauthor{\bsnm{{Rosati}},~\bfnm{Tommaso}\binits{T.}}
(\byear{2023}).
\btitle{{Global existence for perturbations of the 2D stochastic Navier-Stokes equations with space-time white noise}}.
\bhowpublished{arXiv:2301.11059}.
\bdoi{10.48550/arXiv.2301.11059}
\end{bmisc}
\endbibitem

\bibitem[\protect\citeauthoryear{Hairer and Shen}{2016}]{HS16}
\begin{barticle}[author]
\bauthor{\bsnm{Hairer},~\bfnm{Martin}\binits{M.}} \AND \bauthor{\bsnm{Shen},~\bfnm{Hao}\binits{H.}}
(\byear{2016}).
\btitle{The dynamical sine-{G}ordon model}.
\bjournal{Comm. Math. Phys.}
\bvolume{341}
\bpages{933--989}.
\bdoi{10.1007/s00220-015-2525-3}
\bmrnumber{3452276}
\end{barticle}
\endbibitem

\bibitem[\protect\citeauthoryear{{Hairer} and {Singh}}{2023}]{HS2023}
\begin{bmisc}[author]
\bauthor{\bsnm{{Hairer}},~\bfnm{Martin}\binits{M.}} \AND \bauthor{\bsnm{{Singh}},~\bfnm{Harprit}\binits{H.}}
(\byear{2023}).
\btitle{{Regularity Structures on Manifolds and Vector Bundles}}.
\bhowpublished{arXiv:2308.05049}.
\bdoi{10.48550/arXiv.2308.05049}
\end{bmisc}
\endbibitem

\bibitem[\protect\citeauthoryear{Kupiainen}{2016}]{K16}
\begin{barticle}[author]
\bauthor{\bsnm{Kupiainen},~\bfnm{Antti}\binits{A.}}
(\byear{2016}).
\btitle{Renormalization group and stochastic {PDE}s}.
\bjournal{Ann. Henri Poincar\'{e}}
\bvolume{17}
\bpages{497--535}.
\bdoi{10.1007/s00023-015-0408-y}
\bmrnumber{3459120}
\end{barticle}
\endbibitem

\bibitem[\protect\citeauthoryear{Moinat and Weber}{2020}]{MW2020}
\begin{barticle}[author]
\bauthor{\bsnm{Moinat},~\bfnm{Augustin}\binits{A.}} \AND \bauthor{\bsnm{Weber},~\bfnm{Hendrik}\binits{H.}}
(\byear{2020}).
\btitle{Space-Time Localisation for the Dynamic Model}.
\bjournal{Communications on Pure and Applied Mathematics}
\bvolume{73}
\bpages{2519--2555}.
\end{barticle}
\endbibitem

\bibitem[\protect\citeauthoryear{Mourrat and Weber}{2017}]{MW17}
\begin{barticle}[author]
\bauthor{\bsnm{Mourrat},~\bfnm{Jean-Christophe}\binits{J.-C.}} \AND \bauthor{\bsnm{Weber},~\bfnm{Hendrik}\binits{H.}}
(\byear{2017}).
\btitle{The dynamic {$\Phi^4_3$} model comes down from infinity}.
\bjournal{Comm. Math. Phys.}
\bvolume{356}
\bpages{673--753}.
\bdoi{10.1007/s00220-017-2997-4}
\bmrnumber{3719541}
\end{barticle}
\endbibitem

\bibitem[\protect\citeauthoryear{{Shen}, {Zhu} and {Zhu}}{2024}]{SZZ24}
\begin{bmisc}[author]
\bauthor{\bsnm{{Shen}},~\bfnm{Hao}\binits{H.}}, \bauthor{\bsnm{{Zhu}},~\bfnm{Rongchan}\binits{R.}} \AND \bauthor{\bsnm{{Zhu}},~\bfnm{Xiangchan}\binits{X.}}
(\byear{2024}).
\btitle{{Global well-posedness for 2D generalized Parabolic Anderson Model via paracontrolled calculus}}.
\bhowpublished{arXiv:2402.19137}.
\bdoi{10.48550/arXiv.2402.19137}
\end{bmisc}
\endbibitem

\bibitem[\protect\citeauthoryear{Tsatsoulis and Weber}{2018}]{TW18}
\begin{barticle}[author]
\bauthor{\bsnm{Tsatsoulis},~\bfnm{Pavlos}\binits{P.}} \AND \bauthor{\bsnm{Weber},~\bfnm{Hendrik}\binits{H.}}
(\byear{2018}).
\btitle{Spectral gap for the stochastic quantization equation on the 2-dimensional torus}.
\bjournal{Ann. Inst. Henri Poincar\'e{} Probab. Stat.}
\bvolume{54}
\bpages{1204--1249}.
\bdoi{10.1214/17-AIHP837}
\bmrnumber{3825880}
\end{barticle}
\endbibitem

\end{thebibliography}

\end{document}